\theoremstyle{plain}
\newtheorem{theorem}{Theorem}[section]
\newtheorem{maintheorem}{Theorem}
\newtheorem{maincor}[maintheorem]{Corollary}
\newtheorem{proposition}[theorem]{Proposition}
\newtheorem{lemma}[theorem]{Lemma}
\newtheorem{conjecture}[theorem]{Conjecture}
\theoremstyle{definition}
\newtheorem{definition}[theorem]{Definition}
\newtheorem{example}[theorem]{Example}
\newtheorem{construction}[theorem]{Construction}
\newtheorem{remark}[theorem]{Remark}
\newcommand{\nc}{\newcommand}
\nc{\dmo}{\DeclareMathOperator}
\nc{\cA}{\mathcal{A}}
\nc{\sB}{\mathscr{B}}
\nc{\C}{\mathbb{C}}
\nc{\cD}{\mathcal{D}}
\nc{\bF}{\mathbb{F}}
\nc{\cF}{\mathcal{F}}
\nc{\cI}{\mathcal{I}}
\nc{\cK}{\mathcal{K}}
\nc{\cEll}{\mathcal{L}}
\nc{\cM}{\mathcal{M}}
\nc{\bM}{\mathbf{M}}
\nc{\N}{\mathbb{N}}
\nc{\Q}{\mathbb{Q}}
\nc{\R}{\mathbb{R}}
\nc{\cS}{\mathcal{S}}
\nc{\cT}{\mathcal T}
\nc{\cU}{\mathcal U}
\nc{\Z}{\mathbb{Z}}
\nc{\disk}{\mathbb{D}}
\nc{\hyp}{\mathbb{H}}
\nc{\CP}{\mathbb{CP}}
\nc{\RP}{\mathbb{RP}}
\dmo{\Mod}{Mod}
\dmo{\PMod}{PMod}
\dmo{\LMod}{LMod}
\dmo{\Diff}{Diff}
\dmo{\Homeo}{Homeo}
\dmo{\dist}{dist}
\dmo\BDiff{BDiff}
\dmo\SO{SO}
\dmo\Hom{Hom}
\dmo\SL{SL}
\dmo\rank{rank}
\dmo\sig{sig}
\dmo\Out{Out}
\dmo\Aut{Aut}
\dmo\Inn{Inn}
\dmo\GL{GL}
\dmo\PGL{PGL}
\dmo\Gr{Gr}
\dmo\PSL{PSL}
\dmo\BHomeo{BHomeo}
\dmo\EHomeo{EHomeo}
\dmo\EDiff{EDiff}
\dmo\Disc{Disc}
\dmo\Aff{Aff}
\renewcommand{\bar}{\overline}
\dmo\Teich{Teich}
\dmo\Fix{Fix}
\nc{\pair}[1]{\ensuremath{\left\langle #1 \right\rangle}}
\nc{\abs}[1]{\ensuremath{\left| #1 \right|}}
\nc{\action}{\circlearrowright}
\nc{\norm}[1]{\ensuremath{\left | \left | #1 \right | \right |}}
\nc{\abcd}[4]{\ensuremath{\left(\begin{array}{cc} #1 & #2 \\ #3 & #4 \end{array}\right)}}
\dmo{\Isom}{Isom}
\nc{\normal}{\vartriangleleft}
\dmo{\Vol}{Vol}
\dmo{\im}{Im}
\dmo{\Push}{Push}
\dmo{\Conf}{Conf}
\dmo{\UConf}{UConf}
\dmo{\PConf}{PConf}
\dmo{\Poly}{Poly}
\dmo{\PB}{PB}
\dmo{\id}{id}
\dmo{\Jac}{Jac}
\dmo{\Pic}{Pic}
\dmo{\Stab}{Stab}
\dmo{\Arf}{Arf}
\dmo{\End}{End}
\dmo{\Gal}{Gal}
\dmo{\lcm}{lcm}
\dmo{\ab}{ab}
\dmo{\opp}{op}
\dmo{\SU}{SU}
\dmo{\OT}{\Omega \mathcal{T}}
\dmo{\OM}{\Omega \mathcal{M}}
\dmo{\PH}{\mathbb{P}\mathcal{H}}
\dmo{\spin}{spin}
\dmo{\even}{even}
\dmo{\odd}{odd}
\dmo{\comp}{\mathcal{H}}
\dmo{\Mgk}{\mathcal{M}_{g, \underline{\kappa}}}
\dmo{\orb}{orb}
\dmo{\AJ}{AJ}
\dmo{\Ck}{\mathsf{C}(\underline{\kappa})}
\dmo{\Int}{Int}
\dmo{\pr}{pr}
\dmo{\lab}{lab}
\dmo{\Sym}{Sym}
\dmo{\Ann}{Ann}
\dmo{\Rad}{Rad}
\dmo{\Ind}{Ind}
\dmo{\Div}{Div}
\dmo{\Res}{Res}
\dmo{\Hur}{Hur}
\dmo{\vcd}{vcd}
\nc{\Span}[1]{\operatorname{Span}(#1)}
\renewcommand{\epsilon}{\varepsilon}
\renewcommand{\le}{\leqslant}
\nc{\coloneq}{\mathrel{\mathop:}\mkern-1.2mu=}
\nc{\margin}[1]{\marginpar{\scriptsize #1}}
\nc{\para}[1]{\medskip\noindent\textbf{#1.}}
\definecolor{myblue}{RGB}{102,153, 255}
\definecolor{myred}{RGB}{204,0,0}
\definecolor{mygreen}{RGB}{0,204,0}
\definecolor{myorange}{RGB}{255,102,0}
\definecolor{mypurple}{RGB}{138,43,226}
\definecolor{myyellow}{RGB}{255,204,0}
\nc{\red}[1]{\textcolor{myred}{#1}}
\nc{\blue}[1]{\textcolor{myblue}{#1}}
\nc{\Adm}{\mathbf{Adm}}
\nc{\Env}{\mathbf{Env}}
\dmo{\ord}{ord}
\nc{\MD}{\mathcal{MD}}
\author{Nick Salter}
\address{Nick Salter: Department of Mathematics, University of Notre Dame, 255 Hurley Building, Notre Dame, IN 46556}
\email{nsalter@nd.edu}
\date{August 12, 2024}
\title{Monodromy of stratified braid groups, II}
\begin{document}
\maketitle

\begin{abstract}
The space of monic squarefree polynomials has a stratification according to the multiplicities of the critical points, called the equicritical stratification. Tracking the positions of roots and critical points, there is a map from the fundamental group of a stratum into a braid group. We give a complete determination of this map. It turns out to be characterized by the geometry of the translation surface structure on $\mathbb{CP}^1$ induced by the logarithmic derivative $df/f$ of a polynomial in the stratum.
\end{abstract}

\section{Introduction}
Let $\Poly_n(\C)$ denote the space of monic squarefree complex polynomials of degree $n$. Associating a polynomial to its roots and vice versa, this can equivalently be described as the space $\UConf_n(\C)$ of unordered $n$-tuples of distinct points in $\C$; its fundamental group is the braid group $B_n$ on $n$ strands. The focus of this paper is on the {\em equicritical stratification} $\{\Poly_n(\C)[\kappa]\}$ on $\Poly_n(\C)$, previously introduced in \cite{stratbraid1}. Here, $\kappa = k_1 \ge \dots \ge k_p$ is a partition of $n-1$, and a polynomial $f \in \Poly_n(\C)$ belongs to $\Poly_n(\C)[\kappa]$ if and only if the critical points of $f$ (i.e. roots of $f'$) have multiplicities specified by $\kappa$. 

One of the most fundamental problems about $\Poly_n(\C)[\kappa]$ is to understand its fundamental group, a so-called {\em stratified braid group}
\[
\sB_n[\kappa]:= \pi_1(\Poly_n(\C)[\kappa]).
\]
One would certainly expect this to be closely related to the braid group. Indeed, for a partition $\kappa$ of $n-1$ with $p$ parts, the stratum $\Poly_n(\C)[\kappa]$ admits an embedding into the configuration space $\UConf_{n+p}(\C)$, by associating $f \in \Poly_n(\C)[\kappa]$ to the $(n+p)$-tuple of its roots and critical points. Taking $\pi_1$, we obtain a {\em monodromy map} $\rho: \sB_n[\kappa] \to B_{n+p}$. 

Our main result gives a complete description of the image of $\rho$. We find that it is characterized by a structure known as a {\em relative winding number function}, as defined in \Cref{section:framedbraid}. The subgroup of the braid group preserving a given winding number function is called a {\em framed braid group}\footnote{This terminology is chosen to mirror the ``framed mapping class groups'' studied in \cite{strata3}. This should not be confused with a braid group relative to fixed tangent vectors at the marked points, which have also been given this name.} - see \Cref{subsection:framedbraid}. A particular ``logarithmic'' relative winding number function $\psi_T$ arises in our setting by considering the geometry of $\CP^1$ equipped with the logarithmic derivative $df/f$ - see \Cref{section:logdiff}.

\begin{maintheorem}\label{theorem:main}
    For all $n \ge 3$ and all partitions $\kappa = k_1 \ge \dots \ge k_p$ of $n-1$ with $p \ge 2$ parts, the monodromy map
    \[
    \rho: \sB_n[\kappa] \to B_{n+p}
    \]
    has image $B_\kappa[\psi_T]$, the framed braid group associated to the logarithmic relative winding number function $\psi_T$. Since $\sB_n[\kappa]$ is finitely generated (being the fundamental group of a quasiprojective variety), the same is true of the framed braid group $B_\kappa[\psi_T]$.
\end{maintheorem}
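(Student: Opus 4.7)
The plan is to prove the equality by establishing both inclusions $\rho(\sB_n[\kappa]) \subseteq B_\kappa[\psi_T]$ and $B_\kappa[\psi_T] \subseteq \rho(\sB_n[\kappa])$ separately. As with analogous results on geometric monodromy of strata of abelian differentials in higher genus, I expect the first inclusion to follow essentially from the construction while the reverse inclusion will constitute the main work.

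For the first inclusion, recall that $\psi_T$ is defined from the translation surface structure on $\CP^1$ induced by the logarithmic derivative $df/f$, as described in \Cref{section:logdiff}. This differential varies holomorphically with $f$ across the stratum, so any loop $\gamma$ in $\Poly_n(\C)[\kappa]$ based at $f_0$ yields a one-parameter family of translation surfaces returning to the original. Parallel transport along $\gamma$ preserves the winding number of an arc measured against this translation structure, which is exactly the condition $\rho(\gamma) \in B_\kappa[\psi_T]$.

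For the reverse inclusion, the strategy is two-step. First, fix a finite generating set for $B_\kappa[\psi_T]$: by analogy with framed braid and framed mapping class groups, this should consist of half-twists along arcs of a prescribed winding number, possibly supplemented by ``push''-type elements moving a critical point around a root. Such a generating set should be available from \cite{stratbraid1} or derivable by analogous techniques. Second, realize each generator as the monodromy of an explicit loop in $\Poly_n(\C)[\kappa]$: a half-twist swapping two adjacent roots can be constructed from a one-parameter polynomial family that performs the exchange while holding the critical points in place, while a push-type generator comes from a loop along which a critical point traces a small circle about a chosen root.

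The main obstacle will be the winding-number bookkeeping in the realization step: each candidate loop must produce monodromy matching not only the underlying braid but also the framing data dictated by $\psi_T$. I expect to handle this through a degeneration argument, approaching a boundary stratum where $(\CP^1, df/f)$ splits into simpler translation surfaces on which the monodromy decomposes, combined with an induction on $n$ or on the number of parts $p$ using path-connectedness of $\Poly_n(\C)[\kappa]$ established in \cite{stratbraid1}. The base cases would reduce to direct computations in strata of small combinatorial type, where one can verify by hand that the explicit loops chosen realize the target framed generators.
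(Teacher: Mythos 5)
Your overall two-inclusion structure is correct, and your argument for $\rho(\sB_n[\kappa]) \subseteq B_\kappa[\psi_T]$ matches the paper's (which cites \cite[Lemma 4.6]{stratbraid1}). However, your plan for the reverse inclusion has a genuine gap at its very first step: you assume that a finite generating set for $B_\kappa[\psi_T]$ (half-twists plus pushes, with prescribed winding data) ``should be available from \cite{stratbraid1} or derivable by analogous techniques.'' It is not available there --- \cite{stratbraid1} only treats the root monodromy into $B_n$ and proves a weaker statement in a restricted range --- and producing such a generating set is precisely the hard part of the problem, not a known input to it. Framed braid groups are defined as kernels/stabilizers inside $B_\kappa$, and no generating set is given a priori; obtaining one for all $\kappa$ is what the paper's machinery is engineered to do.

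The paper's actual route avoids this chicken-and-egg problem by never writing down a generating set in advance. Instead it builds the complex $\bM_\kappa$ of admissible root markings (systems of winding-number-zero arcs from $\infty$ to the roots), proves its connectivity (\Cref{lemma:armconnected}, which is stated to be ``the technical heart of the paper'' and occupies all of \Cref{section:connectivity}), and then applies the standard principle that a group acting on a connected graph is generated by a vertex stabilizer together with elements moving the vertex to its neighbors. The two classes of elements are then realized in the monodromy (\Cref{lemma:monodromycontainsstab,lemma:monodromytransadjacent}) via explicit \emph{half-push} deformations of the translation surface within the open stratum --- not via degeneration to a boundary stratum, and not via induction on $n$ or $p$ as you propose. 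Without some substitute for the connectivity of a complex like $\bM_\kappa$, your plan as written cannot get off the ground: you would first need to independently establish exactly the kind of generation result that the paper derives from the geometric-group-theory framework. If you want to push your approach, the minimal fix is to prove a generating-set theorem for $B_\kappa[\psi_T]$ by some other means and then carry out the realization step; but be aware that the winding-number bookkeeping you flag as ``the main obstacle'' is only tractable in the paper because the basepoint surfaces $T_\sigma$ and the half-push moves are carefully engineered so that the monodromy elements preserve the chosen ARM by construction.
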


\begin{remark}
    \Cref{theorem:main} does not apply in the case $p = 1$ of a single critical point, but it is easy to get a complete understanding of what happens in this case. Necessarily $\kappa = \{n-1\}$, and it is readily seen that any $f \in \Poly_n(\C)[n-1]$ is of the form $f(z) = (z-\alpha)^n + \beta$ for $\alpha \in \C$ and $\beta \in \C^*$. Thus $\Poly_n(\C)[n-1]$ can be identified with $\C^* \ltimes \C$ (indeed, it carries a free and transitive action, and hence is a {\em torsor} for $\C^* \ltimes \C$). In particular, it has cyclic fundamental group, and the monodromy image is seen to be generated by a ``$1/n$ rotation'', arranging the $n$ roots at roots of unity and applying a rotation by $2 \pi /n$. 
\end{remark}

We hope to use \Cref{theorem:main} as a stepping-stone to a complete determination of $\sB_n[\kappa]$.
\begin{conjecture}\label{conjecture:iso}
For $\kappa = k_1 \ge \dots \ge k_p$ with $p \ge 3$ parts, $\rho$ is injective, and hence there is an isomorphism
\[
\sB_n[\kappa] \cong B_\kappa[\psi_T].
\]
\end{conjecture}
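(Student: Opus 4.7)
My plan is to upgrade the surjection of \Cref{theorem:main} to an isomorphism by computing $\sB_n[\kappa]$ directly from an explicit hypersurface-arrangement description of the stratum, and matching the resulting short exact sequence with that of $B_\kappa[\psi_T]$. The key observation is the parameterization
\[
X := \Poly_n(\C)[\kappa] \;\cong\; \bigl\{(\underline c, b) \in \UConf_{\underline\kappa}(\C) \times \C : b \neq -g_i(\underline c) \text{ for all } i\bigr\},
\]
where $\underline c = \{c_1, \ldots, c_p\}$ is the colored configuration of critical points (coloring induced by the multiplicities $k_i$), $g_i(\underline c) := n \int_0^{c_i} \prod_j (z - c_j)^{k_j}\, dz$, and the inverse sends $(\underline c, b)$ to $f(z) = b + n \int_0^z \prod_j (w - c_j)^{k_j}\, dw$. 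The squarefree condition on $f$ amounts to $f(c_i) = b + g_i(\underline c) \neq 0$ for all $i$. Thus $X$ identifies with the complement, in $\UConf_{\underline\kappa}(\C) \times \C$, of an arrangement of $p$ smooth complex hypersurfaces $Z_i = \{b = -g_i(\underline c)\}$.

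The first step is to compute $\pi_1(X)$ from this description. Project $\pi \colon X \to \UConf_{\underline\kappa}(\C)$ onto the critical-point configuration. Over the open dense locus $V \subset \UConf_{\underline\kappa}(\C)$ on which the $g_i(\underline c)$ are pairwise distinct, $\pi$ restricts to a fiber bundle with aspherical fiber $\C$ minus $p$ distinct points. Both base and fiber are then $K(\pi,1)$'s, so $\pi^{-1}(V)$ is aspherical and we have
\[
1 \to F_p \to \pi_1(\pi^{-1}(V)) \to \pi_1(V) \to 1.
\]
Passing from $\pi^{-1}(V)$ to $X$ requires adjoining relations given by meridians around the coincident-critical-value locus $X \setminus \pi^{-1}(V)$, a real-codimension-$2$ subvariety whose local monodromy can be analyzed by inspecting the polynomial models near a merger of two critical values.

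The second step is to match this with the framed braid group side. $B_\kappa[\psi_T]$ should fit into an analogous short exact sequence whose kernel is generated by ``local rotation'' braids: for each critical point $c_i$, the braid that rotates the $k_i+1$ nearby roots by angle $2\pi/(k_i+1)$ around $c_i$. Each such braid preserves the logarithmic winding-number function $\psi_T$ by a local geometric check, and is naturally identified with the $i$-th generator of the $F_p$ kernel above: as $b$ circles $-g_i(\underline c)$ once, the cluster of $k_i+1$ roots near $c_i$ undergoes exactly this $1/(k_i+1)$ rotation. The monodromy $\rho$ therefore intertwines the two short exact sequences, and a five-lemma argument combined with the surjectivity from \Cref{theorem:main} would yield injectivity.

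The main obstacle lies in the detailed bookkeeping in step 1: the meridian relations at the coincident-critical-value strata must be correctly enumerated and then matched, via the diagram, with analogous relations on the framed-braid side. The restriction $p \ge 3$ likely reflects a genuine degeneration when $p=2$: in that case the base $\UConf_{\underline\kappa}(\C)$ has abelian fundamental group and the two short exact sequences become simple enough that the diagram-chase loses rigidity, likely requiring direct combinatorial computation to conclude.
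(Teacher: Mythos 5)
This statement is a \emph{conjecture} in the paper, not a theorem: the author explicitly leaves it open, records that injectivity \emph{fails} for $p=2$ (joint unpublished work with Huxford), and says only that the complex $\bM_\kappa$ of admissible root markings ``will be essential to our approach'' in future work. So there is no proof in the paper to compare your proposal against, and you should not present a proof sketch as though the result were established.

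On the merits of your outline: the parameterization $X\cong\{(\underline c,b): b\neq -g_i(\underline c)\}$ is correct (it is essentially the $\Aff$-decomposition behind the isoresidual picture in the paper and in Gendron--Tahar), and the observation that a loop of $b$ around $-g_i$ produces a $1/(k_i+1)$-rotation of the cluster of roots near $c_i$ is a good one. But the proposed five-lemma argument is not well-founded as stated. First, you only have a short exact sequence for $\pi_1(\pi^{-1}(V))$, and passing from $\pi^{-1}(V)$ to $X$ is a quotient by meridian relations, so you no longer have a three-term exact sequence for $\pi_1(X)$ to feed into a diagram chase; a five-lemma comparison requires matching kernels, not just surjections. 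Second, on the framed-braid side you would need a genuine short exact sequence $1\to K\to B_\kappa[\psi_T]\to Q\to 1$ in which $K$ is free of rank $p$ on the local-rotation braids and $Q$ is identified with $\pi_1(V)$, the fundamental group of a hypersurface complement inside $\UConf_{\underline\kappa}(\C)$. Nothing in the definition of $B_\kappa[\psi_T]$ produces such a map to $\pi_1(V)$, and it is not clear the local rotations even generate a free subgroup. Until you construct that sequence (or replace the five-lemma by a presentation-matching argument that handles the meridian relations explicitly), the injectivity claim does not follow. Filling in these two points is, in effect, the entire content of the conjecture.
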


It is necessary to include the hypothesis $p \ge 3$; indeed, P. Huxford and the author have shown (in yet-unpublished work) that $\rho$ is {\em never} injective for $p = 2$. This appears to be a low-complexity phenomenon arising from the close connection between $\sB_n[\kappa]$ and free groups, which is unique to the case $p = 2$. For $p \ge 3$, new relations arise in $\sB_n[\kappa]$ that we believe are sufficient to ensure injectivity of $\rho$. We plan to return to this question in future work; the complex of ``admissible root markings'' studied in \Cref{section:connectivity} of this paper will be essential to our approach.

In \cite[Theorem A]{stratbraid1} we obtained a weaker version of \Cref{theorem:main}, in which we considered only the braiding of the roots, ignoring the critical points. We found there that the image is similarly governed by a weak analogue of a relative winding number function; the analogous subgroup of the braid group is called an {\em $r$-spin braid group}. The methods of proof are almost completely different, and notably, the version in \cite{stratbraid1} only applied in a range that excluded certain cases. As a corollary, we are able to strengthen the result of \cite[Theorem A]{stratbraid1}, showing that it holds in the maximal possible range.

\begin{maincor}\label{corollary:main}
    For all $n \ge 3$ and all ordered partitions $\kappa = k_1 \ge \dots \ge k_p$ of $n-1$ with $p \ge 2$ parts, the root monodromy map
    \[
    \bar \rho: \sB_n[\kappa] \to B_n
    \]
    has image 
    \[
    \bar \rho (\sB_n[\kappa]) = B_n[\bar \psi_T],
    \]
    an $r$-spin braid group.
\end{maincor}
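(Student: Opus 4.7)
The plan is to deduce Corollary B directly from Theorem A by analyzing the forgetful map. Observe that $\bar \rho = \pi \circ \rho$, where $\pi : B_{n+p} \to B_n$ is the homomorphism that forgets the $p$ strands corresponding to critical points. Combining this factorization with Theorem A, the statement reduces to the purely combinatorial claim
\[
\pi\bigl(B_\kappa[\psi_T]\bigr) = B_n[\bar\psi_T].
\]

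For the inclusion $\subseteq$, the key point is functoriality of the definitions. By construction, $\bar \psi_T$ is obtained from $\psi_T$ by restricting attention to arcs and loops disjoint from the critical points and reducing winding numbers modulo the integer $r$ associated to the partition $\kappa$. This restriction-reduction procedure commutes with the forgetful map, so any element of $B_{n+p}$ preserving $\psi_T$ descends to an element of $B_n$ preserving $\bar\psi_T$.

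The substantive direction is the reverse inclusion. Given $\beta \in B_n[\bar \psi_T]$, I would choose an arbitrary lift $\tilde\beta \in B_{n+p}$ and seek $\gamma \in \ker(\pi)$ with $\tilde \beta \gamma \in B_\kappa[\psi_T]$; then $\pi(\tilde\beta \gamma) = \beta$ exhibits $\beta$ as lying in $\pi(B_\kappa[\psi_T])$. The kernel $\ker(\pi)$ is naturally identified with a surface braid group on $p$ strands in the $n$-punctured plane; it is generated by point-pushes of critical points along loops around subsets of the roots, together with swaps of critical points of equal multiplicity (which preserve the stratum). The action of both types of generator on $\psi_T$ is explicitly computable: a point-push along a loop $\ell$ around some roots adjusts the winding numbers of arcs incident to the moved critical point by an integer combination of the winding number of $\ell$, weighted by $k_i+1$. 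Since $\beta$ preserves $\bar\psi_T$, the defect $\tilde\beta^* \psi_T - \psi_T$ vanishes modulo $r$ on every arc between roots, so the remaining defect is concentrated on arcs incident to critical points.

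The main obstacle is the last step: showing that the point-pushing subgroup of $\ker(\pi)$ surjects onto the space of admissible defects and therefore can exactly cancel $\tilde\beta^* \psi_T - \psi_T$. This is essentially a cohomological calculation on the $(n+p)$-punctured sphere with coefficients in the appropriate winding-number local system, and reduces to verifying that the point-pushes around each critical point $c_i$ sweep out all values in $(k_i+1)\Z$ consistent with $\bar\psi_T$-preservation, and that these local contributions span the global defect space. Once this surjectivity is established, the correction $\gamma$ exists and Corollary B follows.
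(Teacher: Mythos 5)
Your high-level strategy is the same as the paper's: factor $\bar\rho = \pi \circ \rho$, invoke Theorem~A, and reduce to showing $\pi(B_\kappa[\psi_T]) = B_n[\bar\psi_T]$, where the substantive direction is lifting $\bar f \in B_n[\bar\psi_T]$ and correcting by an element of $\ker\pi$. The issue is that you stop at exactly the point where the work happens. You describe the remaining step as ``essentially a cohomological calculation'' and say ``once this surjectivity is established\dots'' without establishing it. That is the entire content of the corollary, so the proposal as written has a genuine gap rather than a complete alternative argument.

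The paper fills this gap with an elementary, explicit construction that avoids the cohomological framing entirely. Take an ARM $A = \{\alpha_1,\dots,\alpha_n\}$. Since $\bar f$ preserves $\bar\psi_T$ and $\bar\psi_T$ is the mod-$r$ reduction of $\psi_T$, each $\psi_T(f(\alpha_i))$ lies in $r\Z$ (where $r=\gcd(k_1,\dots,k_p)$; this uses \Cref{lemma:slide}, since sliding over the critical point $w_m$ changes the winding number by $k_m \in r\Z$). Now for each root $z_i$, choose a system of arcs disjoint from $f(A)$ connecting $z_i$ to each critical point $w_m$; by twist-linearity, the Dehn twist about a regular neighborhood of such an arc changes $\psi_T(f(\alpha_i))$ by $\pm k_m$ and leaves all other components of $f(A)$ untouched. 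Since the $k_m$ generate $r\Z$, a suitable word in these twists zeroes out $\psi_T(f(\alpha_i))$ one root at a time, and by \Cref{lemma:framedcriterion} the resulting composite lies in $B_\kappa[\psi_T]$ while still projecting to $\bar f$. This is precisely the ``surjectivity onto the defect space'' you asked for, but it is verified locally, root by root, with no local-system cohomology needed. One small correction to your sketch: the twist-linearity weight attached to a critical point of order $k_m$ is $k_m$, not $k_m+1$ --- in the paper's convention, $w(w_m)=k_m$ and the curve enclosing a root (weight $-1$) and $w_m$ contributes $1 + (-1) + k_m = k_m$.
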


\para{Context: strata of differentials} As discussed in \cite{stratbraid1} and used throughout below, an equicritical stratum $\Poly_n(\C)[\kappa]$ is closely related to a particular stratum of meromorphic differentials on $\CP^1$, by associating $f$ to its logarithmic derivative $df/f$. The study of equicritical strata therefore fits into the larger enterprise of understanding the topology of strata of meromorphic and abelian differentials, as pioneered by Kontsevich--Zorich \cite{KZ}.

Already in \cite{KZpre}, the question was raised of determining the fundamental groups of strata, originally in the setting of holomorphic differentials on Riemann surfaces of higher genus. This question has remained stubbornly resistant to attack, apart from Kontsevich--Zorich's work on the hyperelliptic case in \cite{KZ}, as well as the beautiful work of Looijenga--Mondello \cite{LM} which describes the (orbifold) fundamental groups of many strata of differentials in genus $3$. 

Our interest in the equicritical stratum is motivated in large part by our belief that it should serve as a useful test case for the more general study of topological aspects of strata: it is rich enough to require the development of new methods, while remaining tractable enough to actually be amenable to study. 

We should also mention the close connection between the equicritical stratification and the study of the ``isoresidual fibration'' as appearing in the work of Gendron--Tahar \cite{gt, gt2}. There, the interest is in studying the space of all meromorphic differentials on $\CP^1$ with fixed orders of zeroes and poles. The space of polynomial logarithmic derivatives arises here as a fiber of the {\em isoresidual map} assigning such a differential to its vector of residues - by the argument principle, the residue at each zero of $f$ is $2 \pi i$. 

\para{Context: configuration spaces as spaces of polynomials} The results of this paper also fit into the literature on the study of the braid group by way of the isomorphism $\UConf_n(\C) \cong \Poly_n(\C)$. Prior work in this direction includes \cite{thurstonplus}, which (among other results) finds a spine for $\Poly_n(\C)$ consisting of squarefree polynomials all of whose critical values have modulus $1$; the method of proof passes through a consideration of the logarithmic derivative (see \cite[Theorem 9.2]{thurstonplus}).  McCammond states \cite[Remark 3.4]{mccsurvey} that similar ideas were known to Krammer. Dougherty--McCammond \cite{mcd} have investigated the combinatorial and topological structure of a polynomial map, obtaining something similar to the ``strip decomposition'' of \Cref{section:logdiff} (although without the perspective of the logarithmic derivative), and in forthcoming work \cite{mcd2} describe a cell structure on $\Poly_n(\C)$ that is compatible with the equicritical stratification. The ``strip decomposition'' models we consider here also bear some resemblance to B\"odigheimer's theory \cite{bodigheimer} of radial slit configurations as a configuration space model for the moduli space of Riemann surfaces with boundary.

\begin{remark}[A finer stratification?]
The equicritical stratification admits a further refinement where one tracks the multiplicities of both critical points and critical values, and it is natural to wonder about the corresponding questions on this finer stratification. As explained in \cite[Remark 1.4]{stratbraid1}, this in fact quickly reduces to classical considerations, since each of these finer strata is essentially just a Hurwitz space. Therefore, the basic topology (fundamental group, asphericality) of these strata is already understood, and so for this reason, we limit our interest here to the study of the equicritical stratification as we have defined it, in terms of the critical points alone.
\end{remark}

\begin{remark}[Finiteness properties via BNS invariants]
    It is perhaps initially surprising that \Cref{theorem:main} implies that the framed braid group $B_\kappa[\psi]$ is finitely generated, as infinite-index subgroups enjoy no {\em a priori} finiteness properties. We briefly record here an alternative argument that $B_\kappa[\psi]$ is finitely generated via the theory of the BNS invariant. 

    Define the {\em pure} framed braid group 
    \[
    PB_\kappa[\psi] := B_\kappa[\psi] \cap PB_{n+p}
    \]
    in the obvious way, and note that since this is of finite index in $B_\kappa[\psi]$, it is finitely generated if and only if $B_\kappa[\psi]$ is. As explained in \Cref{lemma:pbnormal}, $PB_\kappa[\psi]$ is normal and co-abelian in $PB_{n+p}$. Thus finite generation of $PB_\kappa[\psi]$ can be established by means of the BNS invariant of $PB_n$, which was determined by Koban-McCammond-Meier \cite{kmccm}. We do not wish to launch into a detailed digression on BNS invariants, but suffice it to say that it is simple to explicitly verify that $PB_\kappa[\psi]$ satisfies the BNS criterion \cite[Theorem 4]{strebel} for finite generation. Note, though, that such methods do not furnish an explicit generating set, as is implicit in the proof of \Cref{theorem:main}. 

    Finally, we mention that co-abelian subgroups of $PB_n$ with the further property of normality in $B_n$ were investigated in the recent work of Day--Nakamura \cite{daynak}.
\end{remark}

\para{Approach} To prove \Cref{theorem:main}, we make use of the basic machinery of geometric group theory, obtaining information about a group (particularly a set of generators) from an action on a connected graph. The graph we consider is defined in \Cref{section:connectivity} as the graph of {\em admissible root markings} (ARMs), written, for a partition $\kappa$ of $n-1$, as $\bM_\kappa$. Fix a marking of $n+p+1$ points on $S^2$, of which $n$ are called ``roots'', $p$ are called ``critical points'', and one is called ``$\infty$''. A {\em root marking} is a system of $n$ arcs on $S^2$ connecting $\infty$ to each of the roots, which can be realized disjointly except at the common endpoint $\infty$. Root markings are closely related to the {\em tethers} studied by Hatcher--Vogtmann \cite[Section 3]{HV}. The relative winding number function provides for a $\Z$-valued invariant of any such arc, and a root marking is said to be {\em admissible} if the winding number of each constituent arc is zero. In \Cref{section:connectivity}, we establish \Cref{lemma:armconnected}, which shows that $\bM_\kappa$ is connected. From here, we study the action of the framed braid group $B_\kappa[\psi_T]$ on $\bM_\kappa$ and use this to show that $B_\kappa[\psi_T]$ coincides with the subgroup of elements appearing in the image of $\rho: \sB_n[\kappa] \to B_\kappa[\psi_T]$ (that the image is {\em contained} in $B_\kappa[\psi_T]$ is not hard to show; see \Cref{prop:contained}). 

\para{Outline} In \Cref{section:framedbraid}, we define relative winding number functions and the associated framed braid groups, and establish a number of basic results about them. In \Cref{section:connectivity}, we turn to the graph of admissible root markings $\bM_\kappa$ and various derivatives, ultimately showing the connectivity result \Cref{lemma:armconnected} mentioned above. In \Cref{section:logdiff}, we recall the passage from a polynomial to a translation surface given by assigning $f$ to the translation surface for its logarithmic derivative $df/f$, and we describe the basic combinatorial features (``strip decomposition'') of such a surface. Finally in \Cref{section:mainproof}, we prove \Cref{theorem:main}, by studying the action of $\sB_n[\kappa]$ on $\bM_\kappa$, using explicit deformations of translation surfaces to realize a generating set for $B_\kappa[\psi_T]$.

\para{A note on the figures} The reader should be aware that the figures in the paper sometimes use color to convey information, although the author hopes they are capable of communicating the ideas of the paper even in black-and-white.

\para{Acknowledgements} The author thanks Kathryn Lindsey for insight into the paper \cite{thurstonplus}. Many thanks are due to anonymous referees whose comments greatly improved the exposition of the paper. Support from the National Science Foundation (grant DMS-2153879) is gratefully acknowledged.

\section{Framed braid groups}\label{section:framedbraid}

Here we introduce the main object of study in this paper, the {\em framed braid groups}. These are certain subgroups of the braid group on $S^2$ that preserve a structure known as a ``relative winding number function''. In \Cref{section:logdiff}, we will see that such structures naturally arise when considering the translation surface structures on the Riemann sphere arising from logarithmic derivatives of polynomials. 

\subsection{Basic working environment; relative winding number functions}
Here we recall the theory of relative winding number functions on the plane with marked points. We follow the treatment given in \cite[Section 4]{stratbraid1} with a slight upgrade in notation, systematically replacing subscripts of the form ``$n,p$'' from \cite{stratbraid1} with the more descriptive ``$\kappa$''.

        Let $n \ge 2$ be given, and let $\kappa = k_1\geq \dots \geq k_p$ be a partition of $n-1$. Let $\C_\kappa$ denote the surface $\CP^1$ with $n+p+1$ marked points. By abuse of terminology, $n$ of these are specified as ``roots'', $p$ are specified as ``critical points'', and the remaining point is specified as ``$\infty''$, even when these positions do not correspond to those of any polynomial. 

       We equip $\C_\kappa$ with a {\em weight function}
        \[
        w: \{z_1, \dots, z_{n+p+1}\} \to \Z
        \]
        from the set of distinguished points to $\Z$. The weight of each root and $\infty$ is $-1$, while the weights of the critical points are given (with respect to some ordering) as $k_1, \dots, k_p$. This is consistent with the order of vanishing/pole at the roots, critical points, and $\infty$ on the logarithmic derivative $df/f$.

        For the remainder of the paper, an integer $n \ge 2$ and a partition $\kappa = k_1 \geq \dots \geq k_p$ of $n-1$ with $p$ parts shall be fixed. The roots of $\C_\kappa$ will be enumerated as $z_1, \dots, z_n$, and the critical points will be enumerated as $w_1, \dots, w_p$.

           \begin{definition}[$\kappa$-marked braid group]
             The {\em $\kappa$-marked braid group} $B_\kappa$ is the subgroup of the spherical braid group $B_{n+p+1}(S^2):= \pi_1(\UConf_{n+p+1}(S^2))$ consisting of braids that fix $\infty$, preserve the set of roots setwise, and preserve setwise each set of critical points of a given order. Where convenient, we will write $PB_\kappa$ in place of the more cumbersome notation $PB_{n+p+1}(S^2)$ for the subgroup of $B_\kappa$ consisting of pure braids, i.e. the pure spherical braid group on $n+p+1$ strands.
        \end{definition}

        \begin{definition}[Relative winding number function, twist-linearity]
        Let $\cA_\kappa$ denote the set of isotopy classes of properly-embedded arcs in $\C_\kappa$ with one endpoint at $\infty$ and the other at a root (the tangent vectors at either end are not specified or required to be fixed under isotopy). A {\em relative winding number function}
        \[
        \psi: \cA_\kappa \to \Z
        \]  
        is a function subject to the following {\em twist-linearity property}: let $c\subset \C_\kappa$ be a simple closed curve, oriented so that $\infty$ lies to the right in the chosen direction of travel. Denote the disk bounded by $c$ to the left as $D$. Then for any $\alpha \in \cA_\kappa$,
        \[
        \psi(T_c(\alpha)) = \psi(\alpha) + \pair{c, \alpha}\left(1+ \sum_{p_i \in D} w(p_i)\right).
        \]
        Here, $T_c$ denotes the right-handed Dehn twist about $c$, the arc $\alpha$ is oriented so as to run from $\infty$ to a root, $\pair{\cdot, \cdot}$ denotes the relative algebraic intersection pairing, and the sum is taken over the subset of distinguished points lying in $D$. 
        \end{definition}

\begin{remark}
    The terminology suggests that $\psi(\alpha) \in \Z$ should be interpreted as some kind of winding number of $\alpha$ (``relative'' here indicates that winding numbers of {\em arcs}, as opposed to simple closed curves, are considered). Indeed, this will turn out to be the case for the ``logarithmic relative winding number function'' $\psi_T$ studied in \Cref{section:logdiff}, which will measure winding numbers of arcs relative to a certain background vector field studied therein. The twist-linearity condition axiomatizes how winding numbers change under the application of a Dehn twist. Winding number functions were introduced by Humphries--Johnson \cite{HJ}, who identified the essential role of the twist-linearity condition in axiomatizing functions that measure winding numbers of curves against a choice of vector field.
\end{remark}

                \begin{remark} {\em A priori}, there is a concern that $\psi$ could be ill-defined, on account of the fact that we have not specified tangent vectors for $\alpha$ at the endpoints. Thus $\alpha$ is isotopic to $T_c(\alpha)$, where $c$ is a simple closed curve enclosing one of the endpoints of $\alpha$. But since the order of each endpoint is $-1$, this is consistent with the requirements of the twist-linearity condition. For this same reason, we {\em cannot} measure winding numbers of arcs with an endpoint on a critical point, since there, winding numbers are not well-defined without working relative to a specified tangent vector.
        \end{remark}

        The following provides a useful criterion for computing winding numbers.

        \begin{lemma}[Computing $\psi$ via ``sliding'']\label{lemma:slide}
           Let $\alpha$ be an arc on $\C_\kappa$ connecting $\infty$ to a root $z$; let $\psi$ be a relative winding number function. Let $\beta$ be the arc obtained from $\alpha$ by sliding the right side across a distinguished point of weight $k$. Then $\psi(\beta) = \psi(\alpha) + k$.
        \end{lemma}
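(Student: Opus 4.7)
The plan is to realize the slide $\beta$ as the image $T_c(\alpha)$ of $\alpha$ under a single, carefully chosen Dehn twist, and then to deduce the formula $\psi(\beta) = \psi(\alpha) + k$ as an immediate application of the twist-linearity axiom defining $\psi$.

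For the construction, I would take $c$ to be a simple closed curve bounding a disk $D$ containing exactly two distinguished points---the point $p$ of weight $k$ being slid across, and the endpoint $z$ (a root, of weight $-1$) of $\alpha$---and chosen to meet $\alpha$ transversely in a single point. Orient $c$ so that $\infty$ lies to its right, placing $D$ on the left; this arrangement forces $\pair{c, \alpha} = 1$. Under the right-handed Dehn twist $T_c$, the unique crossing of $\alpha$ with $c$ is dragged once around $\partial D$, so that $T_c(\alpha)$ becomes (after isotopy) an arc obtained from $\alpha$ by appending a loop encircling both $p$ and $z$ inside $D$. Because isotopy classes in $\cA_\kappa$ do not record tangent vectors at the root endpoint, the component of this loop that encircles $z$ can be unwound across $z$ without changing the isotopy class, and what remains is precisely the slide of $\alpha$ across $p$---that is, $\beta$.

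Once the identification $\beta = T_c(\alpha)$ in $\cA_\kappa$ is established, the twist-linearity axiom finishes the job. The sum of weights inside $D$ is $w(p) + w(z) = k - 1$, so
\[
\psi(\beta) = \psi(T_c(\alpha)) = \psi(\alpha) + \pair{c,\alpha}\left(1 + (k-1)\right) = \psi(\alpha) + k.
\]
The main obstacle is the verification in the middle step: one must carefully match the direction of the right-handed twist with the convention ``slide the right side across $p$,'' using precisely the observation recorded in the remark preceding the lemma---namely, that loops around the root endpoint $z$ are trivial in $\cA_\kappa$ since the weight $w(z) = -1$ exactly cancels the ``$+1$'' in the twist-linearity formula when $D$ isolates only $z$. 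Once that identification is pinned down, the arithmetic is automatic, and the weight of $z$ contributing the $-1$ that cancels the $+1$ leaves the clean answer $k$.
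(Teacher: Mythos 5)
Your proposal is correct and follows essentially the same route as the paper: realize the slide as $T_c(\alpha)$ for a simple closed curve $c$ enclosing both $z$ and the weight-$k$ point, then apply twist-linearity so that the enclosed weights $k + (-1)$ together with the $+1$ yield a net change of $k$. Your write-up is merely more verbose in justifying the identification $\beta = T_c(\alpha)$, which the paper delegates to a figure.
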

        \begin{proof}
            In this setting, $\beta = T_c(\alpha)$ for $c$ a simple closed curve enclosing $z$ and the distinguished point $w_i$ of weight $k$. The result now follows by twist-linearity. See \Cref{figure:sliding}.
        \end{proof}

\begin{figure}[ht]
\labellist
\tiny
\pinlabel \textcolor{myyellow}{$c$} at -5 10
\pinlabel \textcolor{myred}{$\alpha$} at 70 -5
\pinlabel \textcolor{myblue}{$\beta$} at 70 30
\pinlabel $w_i$ at 32 10
\endlabellist
\includegraphics[scale=1]{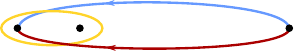}
\caption{Sliding the right side of $\alpha$ across $w_i$ adds $w(w_i) = k$ to $\psi(\alpha)$.}
\label{figure:sliding}
\end{figure}
        
\subsection{Classification of relative winding number functions}

Our objective in this subsection is \Cref{lemma:specifyphi}, which shows that relative winding number functions on $\C_\kappa$ are in non-canonical bijection with $\Z^n$. This will require the following simple lemma.

        \begin{lemma}
            \label{lemma:phiequivariant}
            Let $\psi$ be a relative winding number function, and $\alpha, \beta \in \cA_\kappa$ be arcs with the same endpoints. If $\psi(\alpha) = \psi(\beta)$, then for $f \in PB_\kappa$ arbitrary, $\psi(f(\alpha)) = \psi(f(\beta))$.
        \end{lemma}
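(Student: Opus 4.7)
The plan is to reduce to checking the statement on Dehn twist generators of $PB_\kappa$ and then apply the twist-linearity axiom. First I would invoke the standard fact that the action of $PB_\kappa$ on arcs factors through the pure mapping class group of the punctured sphere $\C_\kappa$, which is generated by Dehn twists $T_c$ about simple closed curves $c \subset \C_\kappa$ (in fact, by twists about curves enclosing exactly two marked points, corresponding to the standard pure braid generators $A_{ij}$). Since the conclusion ``$\psi(f(\alpha)) = \psi(f(\beta))$'' is preserved under composition in $f$, it will suffice to verify it when $f = T_c$ is a single Dehn twist.

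For such an $f$, I would subtract the twist-linearity identities applied to $\alpha$ and to $\beta$, obtaining
\[
\psi(T_c(\alpha)) - \psi(T_c(\beta)) = \bigl(\psi(\alpha) - \psi(\beta)\bigr) + \bigl(\pair{c, \alpha} - \pair{c, \beta}\bigr)\left(1 + \sum_{p_i \in D} w(p_i)\right).
\]
The first parenthetical vanishes by hypothesis, so everything reduces to showing $\pair{c, \alpha} = \pair{c, \beta}$; note that the weight factor need not vanish, so there is no dodge via killing it.

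The key geometric input is that, since $\C_\kappa$ is a sphere, every simple closed curve $c$ is separating, decomposing $\C_\kappa$ into two topological disks. For any properly embedded arc $\gamma$ from $\infty$ to a root $z$ in general position with respect to $c$, the signed count $\pair{c, \gamma}$ records the net number of transitions between the two sides of $c$ as one traverses $\gamma$ from $\infty$ to $z$; this count is $0$ if $\infty$ and $z$ lie in the same component of $\C_\kappa \setminus c$ and $\pm 1$ (with sign fixed by the orientation of $c$) otherwise. Since $\alpha$ and $\beta$ share both endpoints, $\pair{c, \alpha} = \pair{c, \beta}$, finishing the proof. I do not anticipate any significant obstacle: the conceptual heart is that the separating nature of simple closed curves on the sphere turns $\pair{c, \cdot}$ into a boundary invariant on $\cA_\kappa$, depending only on endpoints.
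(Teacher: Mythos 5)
Your proposal is correct and takes essentially the same approach as the paper: reduce to a single Dehn twist via the generation of $PB_\kappa$ by twists (and induction, noting that twists preserve the ``same endpoints'' hypothesis), then subtract the twist-linearity identities to reduce to showing $\pair{c,\alpha} = \pair{c,\beta}$, which holds because arcs with the same endpoints have the same algebraic intersection with any simple closed curve. The only cosmetic difference is that the paper phrases this last point as $\alpha$ and $\beta$ determining the same relative homology class, whereas you unwind it into the separating-curve/net-crossings argument; both are the same observation.
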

        \begin{proof}
            Since $\beta, \gamma \in \cA_\kappa$ have the same endpoints, they determine the same relative homology class and hence $\pair{c, \beta} = \pair{c,\gamma}$ for all simple closed curves $c$. It follows that for a pair of such arcs, $\psi(T_c(\beta)) = \psi(T_c(\gamma))$ for any simple closed curve $c$. Since $PB_\kappa$ is generated by Dehn twists \cite[Section 9.3]{FM}, inductively $\psi(f(\beta)) = \psi(f(\gamma))$ for any $f \in PB_\kappa$. 
        \end{proof}

        Our study of the framed braid group will revolve around systems of arcs with specified winding numbers, called {\em root markings}. Our first use for them will be to see that they suffice to characterize a given relative winding number function.

        \begin{definition}[(Partial) root marking, extension]
            A {\em root marking} of $\C_\kappa$ is a collection of $n$ arcs $A = \{\alpha_1, \dots, \alpha_n\}$ such that $\alpha_i$ begins at $\infty$ and terminates at the root $z_i$. The set of such $\alpha_i$ are required to be disjoint except at the common point at $\infty$. 

            A proper subset of arcs in a root marking is called a {\em partial root marking}. If $A'$ is a partial root marking, a root marking $A$ {\em extends} $A'$ if it contains $A'$ as a subset.
        \end{definition}

        \begin{lemma}\label{lemma:specifyphi}
            Let $\psi: \cA_\kappa \to \Z$ be a relative winding number function, and let $\{\alpha_1, \dots, \alpha_n\}$ be a root marking of $\C_\kappa$. Then $\psi$ is uniquely specified by the vector
            \[
            (\psi(\alpha_1), \dots, \psi(\alpha_n)) \in \Z^n,
            \]
            and conversely, any $v \in \Z^n$ arises in this way via some relative winding number function.
        \end{lemma}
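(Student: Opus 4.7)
The plan is to prove both uniqueness and existence by studying the space of relative winding number functions as an affine space over $\Z^n$. The main tool is the difference of two relative winding number functions, combined with the change of coordinates principle from surface topology.

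For uniqueness, I would consider two relative winding number functions $\psi_1, \psi_2$ with $\psi_1(\alpha_i) = \psi_2(\alpha_i)$ for each $i$, and study the difference $\delta := \psi_1 - \psi_2$. Subtracting the twist-linearity identities cancels the inhomogeneous term $\langle c, \alpha \rangle (1 + \sum w(p_i))$, leaving
\[
\delta(T_c(\alpha)) = \delta(\alpha)
\]
for every simple closed curve $c$ and every arc $\alpha$. Since $PB_\kappa$ is generated by Dehn twists, $\delta$ is invariant under the pure spherical braid group. Now the standard change of coordinates principle for surfaces says that any two arcs in $\cA_\kappa$ sharing the same endpoints are related by an element of $PB_\kappa$; hence $\delta$ depends only on which root the arc ends at, i.e. $\delta(\alpha) = \delta(\alpha_i)$ whenever $\alpha$ ends at $z_i$. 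The hypothesis forces $\delta \equiv 0$.

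For existence, I would first exhibit a single relative winding number function $\psi_0$; at this stage of the paper the cleanest source is the function $\psi_T$ arising from the logarithmic derivative $df/f$ constructed in \Cref{section:logdiff}, or more generally the winding number function associated to any meromorphic $1$-form on $\CP^1$ whose divisor matches the weight function $w$ (twist-linearity of such a function is verified by a direct Gauss–Bonnet/residue computation). Given $\psi_0$ with values $a_i := \psi_0(\alpha_i)$ and a target vector $v = (v_1, \dots, v_n) \in \Z^n$, define
\[
\psi(\alpha) := \psi_0(\alpha) + \sum_{i=1}^{n} (v_i - a_i)\, \delta_i(\alpha),
\]
where $\delta_i(\alpha) = 1$ if $\alpha$ terminates at $z_i$ and is $0$ otherwise. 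Because Dehn twists preserve endpoints of arcs, each $\delta_i$ is invariant under $T_c$; consequently the added correction satisfies the homogeneous twist-linearity relation, so $\psi$ still obeys the full twist-linearity condition. By construction $\psi(\alpha_i) = v_i$, proving surjectivity.

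The only non-formal step is exhibiting the baseline function $\psi_0$; this is the main obstacle if one wants a fully self-contained proof of existence inside \Cref{section:framedbraid}. I would resolve it either by forward-referencing the construction of $\psi_T$ from the horizontal framing induced by $df/f$ in \Cref{section:logdiff}, or by citing the analogous construction in \cite{stratbraid1}. Everything else reduces to bookkeeping with the twist-linearity identity and the change of coordinates principle.
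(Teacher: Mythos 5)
Your uniqueness argument is essentially the paper's, merely repackaged: both rest on twist-linearity propagating values together with transitivity of $PB_\kappa$ on arcs with fixed endpoints, and your difference function $\delta = \psi_1 - \psi_2$ is a tidy way of saying the same thing. The existence argument is where you genuinely diverge. The paper builds $\psi$ from scratch: it declares $\psi(\beta)$ to be the value obtained by factoring some $f \in PB_\kappa$ with $f(\alpha_i) = \beta$ into Dehn twists and applying twist-linearity, and then must prove this well-defined. That verification is the real technical content of the paper's proof: it passes through a presentation of $PB_\kappa = PB_{n+p+1}(S^2)$ as a quotient of the planar $PB_{n+p+1}$ by the central twist $T_z$, checks invariance of $\psi$ under the commutator relations $[T_c,T_d]$ and under $T_z$ (whose enclosed weights sum to $-1$), and finishes with \Cref{lemma:phiequivariant} together with the observation that the stabilizer of $\alpha_i$ is generated by twists disjoint from $\alpha_i$. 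You instead note that the set of relative winding number functions is a torsor over the group $\Z^n$ of endpoint functions, reducing existence to exhibiting a single $\psi_0$, which you source by forward reference to $\psi_T$ of \Cref{definition:lrwnf} (or to \cite{stratbraid1}). This is logically sound --- $\psi_T$ is defined geometrically and does not depend on \Cref{lemma:specifyphi} --- and it is shorter, but it trades away the self-containedness of \Cref{section:framedbraid} that the paper preserves via its presentation-theoretic check. Both routes are correct; yours is the more standard affine-space argument at the cost of a forward dependency the paper evidently wished to avoid.
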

        \begin{proof}
            As is well-known, the (spherical) pure braid group $PB_\kappa = PB_{n+p+1}(S^2)$ acts transitively on the set of isotopy classes of arcs with fixed endpoints (this is an instance of the ``change-of-coordinates principle'' of \cite[Section 1.3]{FM}). Thus the value of $\psi$ on any arc connecting $\infty$ to some root $z_i$ is determined by the value $\psi(\alpha_i)$, by the twist-linearity condition in conjunction with the fact that the pure braid group is generated by Dehn twists. 

            Conversely, we claim that given any $(x_1, \dots, x_n) \in \Z^n$, this is realized as $(\psi(\alpha_1), \dots, \psi(\alpha_n))$ for some relative winding number function $\psi$. One provisionally extends $\psi$ from $\{\alpha_1, \dots, \alpha_n\}$ to $\cA_\kappa$ by declaring $\psi(\beta)$ to be the value computed from the appropriate $\psi(\alpha_i)$ via the twist-linearity formula, and one seeks to verify that this is well-defined: if $f, g \in PB_{\kappa}$ satisfy $f(\alpha_i) = g(\alpha_i)= \beta$, must the value $\psi(\beta)$ as computed from $f$ agree with that given by $g$? Abusing notation, we will write $``\psi(f(\alpha_i))''$ to denote the value obtained by factoring $f$ into Dehn twists and repeatedly applying the twist-linearity formula.

            A first question is whether $\psi(f(\alpha_i))$ is even independent of the factorization of $f$ into twists. To do so, we will examine a presentation for $PB_\kappa = PB_{n+p+1}(S^2)$. Attaching a singly-punctured disk to the boundary of an $n+p$-punctured disk realizes $PB_{n+p+1}(S^2)$ as a quotient of the planar pure braid group $PB_{n+p+1}$ by the central twist $T_z$, where $z \subset \C$ is a curve separating $\infty$ from the remaining distinguished points \cite[Section 3.6]{FM}. Every relation in $\PB_{n+p+1}$ is a product of commutators \cite[Section 9.3]{FM}. It therefore suffices to show that (a) $\psi([T_c,T_d](\alpha)) = \psi(\alpha)$ for arbitrary curves $c,d$ on $\C_\kappa$, and, (b) 
            $T_z$ preserves winding numbers. (b) is easy to establish - the sum of the orders of the $n+p$ distinguished points enclosed by $z$ is $-1$ (being composed of $n$ roots of order $-1$ and $p$ critical points of orders $k_i$ summing to $n-1$), so by twist-linearity, $T_z$ has no effect on winding numbers.
            
            It remains to consider (a). Writing
            \[
            [T_c, T_d] = T_c T_{T_d(c)}^{-1},
            \]
            we find, by the twist-linearity formula,
            \[
            \psi([T_c,T_d](\alpha)) = \psi(T_{T_d(c)}^{-1}(\alpha)) + \pair{c,T_{T_d(c)}^{-1}(\alpha)}k,
            \]
            where $k$ is an integer determined by the orders of the distinguished points inside $c$ as in \Cref{definition:lrwnf}. Applying twist-linearity to the first term,
            \[
            \psi([T_c,T_d](\alpha)) = \psi(\alpha) - \pair{T_d(c), \alpha} k' + \pair{c,T_{T_d(c)}^{-1}(\alpha)}k,
            \]
            where likewise $k'$ is determined by the orders of the distinguished points inside $T_d(c)$.
            We claim that $k = k'$ and that $\pair{T_d(c), \alpha} = \pair{c,T_{T_d(c)}^{-1}(\alpha)}$. Both of these are true for the same reason: the curves $c$ and $T_d(c)$ enclose the same set of distinguished points (note that the algebraic intersection number is $1$ or $0$ depending on whether $\alpha$ terminates inside $c$ (equivalently, inside $T_d(c)$) or not).

            Having established that the value $\psi(f(\alpha_i))$ can be computed from $\psi(\alpha_i)$ via any factorization of $f$ into Dehn twists, we next suppose that $f, g \in PB_{n+p+1}$ satisfy $f(\alpha_i) = g(\alpha_i)$. We apply \Cref{lemma:phiequivariant} to see that $\psi(f(\alpha_i)) = \psi(g(\alpha_i))$ if and only if $\psi(\alpha_i) = \psi(f^{-1}g(\alpha_i))$. By assumption, $f^{-1}g$ fixes $\alpha_i$, and so can be viewed as an element of $PB_{n+p}(S^2) \le PB_{n+p+1}(S^2)$. Therefore, $f^{-1}g$ can be factored into generators for this subgroup, which consist of Dehn twists disjoint from $\alpha_i$. By the twist-linearity formula, no such twist has an effect on the winding number of $\alpha_i$, as required.
        \end{proof}

    \subsection{The framed braid group}\label{subsection:framedbraid}
    Here we come to the main definition of the paper, the framed braid group.

        \begin{definition}[Framed braid group]\label{definition:framedbraid}
        Let $\psi: \cA_\kappa \to \Z$ be a relative winding number function. The {\em framed braid group} $B_\kappa[\psi]$ is the subgroup of $B_\kappa$ consisting of $f \in B_\kappa$ for which
        \[
        \psi(f(\alpha)) = \psi(\alpha)
        \]
        for all $\alpha \in \cA_\kappa$. The {\em pure framed braid group} $PB_\kappa[\psi]$ is the intersection 
        \[
        PB_\kappa[\psi] = B_\kappa[\psi] \cap PB_{\kappa}.
        \]
        \end{definition}

                The following lemma gives a simple finite criterion for determining membership in $B_\kappa[\psi]$.
        \begin{lemma}
            \label{lemma:framedcriterion}
            An element $f \in B_\kappa$ is contained in $B_\kappa[\psi]$ if and only if, for any root marking $\{\beta_1, \dots, \beta_n\}$, there are equalities $\psi(f(\beta_i)) = \psi(\beta_i)$ for $i = 1,\dots, n$.
        \end{lemma}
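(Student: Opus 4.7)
The forward direction is immediate from the definition, since a root marking is in particular a collection of arcs in $\cA_\kappa$. The substance is in the converse: assuming the finite check on one root marking, we must upgrade this to equality of $\psi \circ f$ and $\psi$ on all of $\cA_\kappa$.

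My plan is to view $\psi' := \psi \circ f$ as a candidate relative winding number function and apply the classification result \Cref{lemma:specifyphi}. Concretely, define $\psi': \cA_\kappa \to \Z$ by $\psi'(\alpha) = \psi(f(\alpha))$. Since $f \in B_\kappa$ fixes $\infty$ and permutes the roots, $f(\alpha)$ still lies in $\cA_\kappa$, so $\psi'$ is well-defined on $\cA_\kappa$. The two key verifications are then: (i) $\psi'$ satisfies the twist-linearity property, and (ii) $\psi'$ agrees with $\psi$ on a root marking.

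For (i), given a simple closed curve $c$ oriented with $\infty$ on the right and an arc $\alpha \in \cA_\kappa$, I compute
\[
\psi'(T_c(\alpha)) = \psi(f T_c(\alpha)) = \psi(T_{f(c)} f(\alpha)),
\]
using the fact that mapping classes conjugate Dehn twists to Dehn twists. Applying the twist-linearity of $\psi$ to the right-hand side yields a correction term of $\pair{f(c),f(\alpha)}\bigl(1 + \sum_{p_i \in f(D)} w(p_i)\bigr)$. Because $f$ is an orientation-preserving homeomorphism fixing $\infty$ and preserving the weight function on distinguished points (this is exactly the content of the definition of $B_\kappa$), the algebraic intersection pairing is preserved, and the weighted sum over points in $f(D)$ equals that over $D$. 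Thus the correction term reduces to the one in the twist-linearity formula for $\psi'$ applied to $(c, \alpha)$, confirming that $\psi'$ is a relative winding number function.

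Point (ii) is where the hypothesis enters: by assumption $\psi'(\beta_i) = \psi(f(\beta_i)) = \psi(\beta_i)$ for each $i$. By \Cref{lemma:specifyphi}, a relative winding number function is determined by its values on a root marking, so $\psi' = \psi$ on all of $\cA_\kappa$, which is exactly the statement that $f \in B_\kappa[\psi]$. The main delicate point is the book-keeping in (i), particularly the invariance of the weighted sum under $f$, which is precisely why the definition of $B_\kappa$ tracks critical points of each order separately.
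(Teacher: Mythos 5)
Your proof is correct and follows essentially the same route as the paper: define $\psi' = \psi \circ f$ (the paper writes this as $f^{-1}\cdot\psi$), observe it agrees with $\psi$ on the given root marking, and invoke \Cref{lemma:specifyphi}. You additionally verify explicitly that $\psi'$ satisfies twist-linearity, a step the paper takes for granted; your verification is correct and is a reasonable point to spell out.
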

        \begin{proof}
           By \Cref{lemma:specifyphi}, a relative winding number function is determined by its values on any root marking. By hypothesis, the winding number functions $\psi$ and $f^{-1}\cdot\psi$ (where $f^{-1} \cdot \psi(\alpha) = \psi(f(\alpha))$) take the same values on $\{\beta_1, \dots, \beta_n\}$, and hence are equal.
        \end{proof}

        Framed braid groups are not normal in $B_\kappa$ (a given $B_\kappa[\psi]$ is conjugated by $f \in B_\kappa$ to the potentially distinct group $B_\kappa[f \cdot \psi]$), but they are not so far off.

        \begin{lemma}\label{lemma:pbnormal}
            For any relative winding number function $\psi$, the pure framed braid group is normal in $PB_{\kappa}$, arising as the kernel of the map
            \begin{align*}
            \Delta_\psi: PB_{\kappa} &\to \Z^n\\
            f &\mapsto (\psi(f(\alpha_1)) - \psi(\alpha_1), \dots, \psi(f(\alpha_n)) - \psi(\alpha_n)),
            \end{align*}
            where $\{\alpha_1, \dots, \alpha_n\}$ is an arbitrary root marking. 
        \end{lemma}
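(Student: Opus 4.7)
The plan is to show that $\Delta_\psi$ is a group homomorphism. Once this is done, the identification $\ker \Delta_\psi = PB_\kappa[\psi]$ is immediate from \Cref{lemma:framedcriterion}, and normality of $PB_\kappa[\psi]$ in $PB_\kappa$ is automatic. The task is not entirely formal: the individual values $\psi(f(\alpha_i))$ depend on the choice of representatives of each $\alpha_i$, and one must check that the differences cancel correctly.

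The key auxiliary claim is that for every $f \in PB_\kappa$ and every arc $\gamma$ from $\infty$ to a root $z$, the difference
\[
\delta(f,\gamma) := \psi(f(\gamma)) - \psi(\gamma)
\]
depends only on $f$ and the endpoint $z$, not on the particular arc $\gamma$. To prove this, I would factor $f$ as a product $T_{c_1}^{\epsilon_1} \cdots T_{c_m}^{\epsilon_m}$ of Dehn twists and iterate the twist-linearity formula, yielding
\[
\delta(f,\gamma) = \sum_{j=1}^{m} \epsilon_j \pair{c_j,\, T_{c_{j+1}}^{\epsilon_{j+1}} \cdots T_{c_m}^{\epsilon_m}(\gamma)} \left(1 + \sum_{p \in D_j} w(p)\right).
\]
Since every Dehn twist in $PB_\kappa$ pointwise fixes the distinguished points, each arc appearing inside the pairings still runs from $\infty$ to $z$. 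Because $H_1(S^2) = 0$, any two arcs with a common pair of endpoints are homologous rel distinguished points, so each pairing collapses to $\pair{c_j,\gamma}$. The right-hand side now manifestly depends on $\gamma$ only through its endpoint $z$.

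Granting the auxiliary claim, the homomorphism property follows by a telescoping computation. For $f,g \in PB_\kappa$, the arc $g(\alpha_i)$ terminates at $z_i$, so
\[
\Delta_\psi(fg)_i = \bigl[\psi(fg(\alpha_i)) - \psi(g(\alpha_i))\bigr] + \bigl[\psi(g(\alpha_i)) - \psi(\alpha_i)\bigr] = \delta(f, g(\alpha_i)) + \Delta_\psi(g)_i,
\]
and the auxiliary claim identifies $\delta(f, g(\alpha_i))$ with $\delta(f,\alpha_i) = \Delta_\psi(f)_i$. The main obstacle is the auxiliary claim; everything else is formal manipulation. In particular, the independence-of-root-marking phenomenon implicit in the statement is subsumed by the same argument: had we used a different marking $\{\alpha_1', \dots, \alpha_n'\}$ with each $\alpha_i'$ sharing the endpoint $z_i$ of $\alpha_i$, the value $\delta(f,\alpha_i')$ would coincide with $\delta(f,\alpha_i)$, so the kernel $PB_\kappa[\psi]$ is the same.
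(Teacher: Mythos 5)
Your proof is correct and follows essentially the same route as the paper's. Both arguments pivot on the single observation that the difference $\psi(f(\gamma)) - \psi(\gamma)$ depends only on the endpoints of $\gamma$ (because arcs with common endpoints have the same algebraic intersection number with every simple closed curve on $S^2$), and both then derive the homomorphism property by the same telescoping identity; the paper simply cites the proof of Lemma~\ref{lemma:phiequivariant} for the homological invariance instead of spelling out the Dehn-twist factorization as you do.
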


        \begin{proof}
            The only point in question is that $\Delta_\psi$ is a well-defined homomorphism. To see that $\Delta_\psi$ does not depend on the choice of root marking, suppose $\alpha_i'$ is some other arc with the same endpoints as $\alpha_i$. Arguing as in \Cref{lemma:phiequivariant}, then $\alpha_i$ and $\alpha_i'$ determine the same relative homology class, and so $\psi(f(\alpha_i')) - \psi(\alpha_i') = \psi(f(\alpha_i)) - \psi(\alpha_i)$ as required.

            That $\Delta_\psi$ is a homomorphism is similarly easy to verify: one finds that the $i^{th}$ component of $\Delta_\psi(fg)$ is given by
            \[
            \psi(fg(\alpha_i)) - \psi(\alpha_i) = \psi(f(g (\alpha_i))) - \psi(g( \alpha_i)) + \psi(g (\alpha_i)) - \psi(\alpha_i).
            \]
            By the argument of the first paragraph, since $g \in PB_\kappa$, the first two terms are equal to the $i^{th}$ component of $\Delta_\psi(f)$, and the latter two visibly form the $i^{th}$ component of $\Delta_\psi(g)$.
            \end{proof}

\section{Admissible root markings}\label{section:connectivity}
        This section constitutes the technical heart of the paper. The main objective is \Cref{lemma:armconnected}, which establishes the connectivity of a family of graphs acted on by (subgroups of) the framed braid group. Vertices of these graphs correspond to systems of arcs on $\C_\kappa$ with prescribed winding number ({\em admissible root markings}). In \Cref{section:mainproof}, we will use these results to identify the framed braid group with the image of the monodromy map from the corresponding equicritical stratum of polynomials.

        From here to the end of the paper, let $\psi$ denote a fixed relative winding number function on $\C_\kappa$.

    \subsection{Graphs of admissible arcs}
        
        \begin{definition}[Admissible arc]
            An arc $\alpha \in \cA_\kappa$ is said to be {\em admissible} (tacitly with respect to $\psi$) if $\psi(\alpha) = 0$.
        \end{definition}
            
        \begin{definition}[Admissible root marking (ARM)]
        A root marking $A = \{\alpha_1, \dots, \alpha_n\}$ is {\em admissible} if each $\alpha_i$ is admissible. An admissible root marking will be abbreviated to ``ARM''. Likewise, a partial root marking $A'$ is admissible if $\psi(\alpha_i) = 0$ for all $\alpha_i \in A'$, abbreviated to a ``partial ARM''. 
        \end{definition}

        \begin{remark}
            \label{remark:armordersroots} An ARM $A = \{\alpha_1, \dots, \alpha_n\}$ endows the set of roots with a cyclic ordering, determined by the cyclic ordering of the tangent vectors at $\infty$ of the arcs $\alpha_i$ when realized disjointly. Except where otherwise specified, we will assume that each $\alpha_{i+1}$ is adjacent clockwise from $\alpha_i$. 
        \end{remark}

        \begin{definition}[Graph of ARMs]
            The {\em Graph of ARMs}, written $\bM_\kappa$, is the following graph:
            \begin{itemize}
                \item The vertices of $\bM_\kappa$ are the ARMs on $\C_\kappa$,
                \item ARMs $A = \{\alpha_1, \dots, \alpha_n\}$ and $A' = \{\alpha_1', \dots, \alpha_n'\}$ are connected by an edge in $\bM_\kappa$ if $\alpha_i = \alpha_i'$ for all but a single index $i_0$, and if $\alpha_{i_0}'$ is disjoint from $\alpha_{i_0}$ except at their common endpoints.
            \end{itemize}
        \end{definition}

        \begin{lemma}\label{lemma:armextends}
            Let $A' = \{\alpha_1, \dots, \alpha_k\}$ be a partial ARM. Then there is an extension of $A'$ to an ARM $A$.
        \end{lemma}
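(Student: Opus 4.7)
The plan is to proceed by induction on $n - k$. The case $n - k = 0$ is trivial ($A'$ is already an ARM), so assume $k < n$; it suffices to enlarge $A'$ by one admissible arc $\alpha_{k+1}$ from $\infty$ to some unused root, disjoint from $A'$, and then invoke the induction hypothesis.

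Let $U \coloneq \C_\kappa \setminus A'$; this is an open disk containing the $n-k$ unused roots and all $p$ critical points in its interior, with $\infty$ on its boundary (in $k$ copies, when $k \ge 1$). Pick any unused root $z$ and any embedded arc $\beta \subset U$ from $\infty$ to $z$. It remains to modify $\beta$ so that $\psi(\beta) = 0$, which we will do by iteratively adjusting $\psi(\beta)$ by $\pm 1$ while keeping $\beta$ embedded in $U$.

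The key construction is as follows. Pick any root $z_j$ distinct from $z$, and choose an embedded arc $\eta_j$ from $\infty$ to $z_j$ disjoint from $A' \cup \beta$ except at $\infty$: take $\eta_j = \alpha_j \in A'$ if $j \le k$, or any such arc within $U$ if $j > k$. Let $\gamma_j$ be the simple loop at $\infty$ given by the boundary of a thin tubular neighborhood $\nu(\eta_j)$, chosen thin enough to be disjoint from $A' \cup \beta$ apart from the common endpoint $\infty$. The simple closed curve $c = \partial \nu(\eta_j)$ bounds a disk on the $\infty$-side containing exactly the marked points $\{z_j, \infty\}$, of total weight $-2$; since the sum of weights over all of $\C_\kappa$ is $-n + (n-1) - 1 = -2$, the complementary disk (to the left of $c$, oriented so $\infty$ is on the right) has marked-point weight sum $-2 - (-2) = 0$. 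Since $\beta$ crosses $c$ algebraically once (going from $\infty$ to $z$), twist-linearity yields $\psi(T_c(\beta)) - \psi(\beta) = \pm 1$, with the sign reversible by flipping the orientation of $\gamma_j$. The isotopy class $[T_c(\beta)]$ is represented by the smoothed concatenation $\gamma_j \cdot \beta$, which is an embedded arc in $U$ from $\infty$ to $z$; replacing $\beta$ by this representative realizes the required $\pm 1$ adjustment. Iterating $|\psi(\beta)|$ times produces the desired admissible arc $\alpha_{k+1}$.

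The main subtlety, which I would address explicitly, is that the Dehn twist $T_c$ itself does \emph{not} preserve $A'$ pointwise: the curve $c$ must cross every $\alpha_\ell \in A'$ with $\ell \ne j$, since $\alpha_\ell$ necessarily exits the neighborhood $\nu(\eta_j)$ through $c$ after leaving $\infty$. What saves the argument is that we only need a representative of the isotopy class $[T_c(\beta)]$ that is disjoint from $A'$, and the smoothed concatenation $\gamma_j \cdot \beta$ is such a representative by construction, since $\gamma_j$ is manifestly disjoint from $A'$. The construction is uniform in $k$ and works even in the extremal case $k = n-1$, because we are always free to use a loop around any root distinct from $z$, used or unused.
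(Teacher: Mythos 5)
Your proposal is correct, and it takes a route that is closely related to, but cleaner than, the one in the paper in one respect. The paper first extends $A'$ to an arbitrary (not yet admissible) root marking $A''$, and then for each $\beta_i$ with $i > k$ applies the twist $T_{c_i}$, where $c_i$ encircles $z_i$ together with \emph{all} critical points and is chosen \emph{disjoint from every other arc of $A''$}; this directly realizes a change of $\psi(\beta_i)$ by $\pm(n-1)$ on the nose, with $A''$ otherwise preserved as a set of arcs. After reducing $\psi(\beta_i)$ to the window $\{2-n,\dots,0\}$, the paper finishes by ``repositioning the basepoint'' of $\beta_i$ across the remaining roots, which is exactly your $\pm 1$ move. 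You instead dispense with the large twist entirely and apply the $\pm 1$ move from the outset, iterating arbitrarily many times. The cost is the subtlety you correctly flagged: your twisting curve $c$ (encircling $\infty$ and $z_j$) is not disjoint from $A'$, so $T_c$ does not fix $A'$, and one must argue that the smoothed concatenation $\gamma_j \cdot \beta$ is a representative of $T_c(\beta)$ that is embedded and disjoint from $A'$; in contrast, the paper's choice of $c_i$ sidesteps this by construction. Your weight computation (the disk to the left of $c$ has weight sum $0$, giving a $\pm 1$ change) checks out against the twist-linearity formula, and the iteration is well-founded since at each stage some root $z_{j'} \ne z$ and an $\eta_{j'}$ disjoint from $A' \cup \beta$ can always be produced (using $\alpha_{j'}$ itself if $z_{j'}$ is marked, or an arc in the cut-open disk if not). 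One minor remark: the sign is reversed not by reorienting $\gamma_j$ per se, but by choosing $T_c$ versus $T_c^{-1}$, i.e.\ prepending $\gamma_j^{-1}$ rather than $\gamma_j$; this is what you mean, but stated slightly loosely. Overall this is a valid and somewhat more uniform argument than the paper's, avoiding the two-stage (coarse twist, then fine slide) correction.
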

        \begin{proof}
            Certainly $A'$ extends to some root marking $A'' = \{\alpha_1, \dots, \alpha_k, \beta_{k+1}, \dots, \beta_n\}$; it remains to alter the arcs $\beta_j$ so as to set the winding numbers to zero. For $i = k+1, \dots, n$, let $c_i$ denote a simple closed curve in $\C_\kappa$, disjoint from all arcs in $A''$ except $\beta_i$, that encloses the root $z_i$ in addition to all of the critical points (and no other distinguished points). By the twist-linearity formula, $\psi(T_{c_i}(\beta_i)) = \psi(\beta_i) + n-1$, with the winding numbers of all other arcs in $A''$ left unchanged. Thus by repeated application of $T_{c_i}^{\pm 1}$, it can be arranged so that $2-n \le \psi(\beta_i) \le 0$ for $k+1 \le i \le n$. \Cref{figure:f1} then shows how by ``repositioning the basepoint'' of $\beta_i$, the winding number can be adjusted to zero.
        \end{proof}
\begin{figure}[ht]
\labellist
\small
\pinlabel \textcolor{myblue}{$\beta_i$} at 150 180
\pinlabel \textcolor{myred}{$\alpha_i$} at 80 100
\pinlabel \textcolor{myyellow}{$c_i$} at 60 170
\pinlabel $\abs{\psi(\beta_i)}$ at 235 130
\endlabellist
\includegraphics[scale = 1]{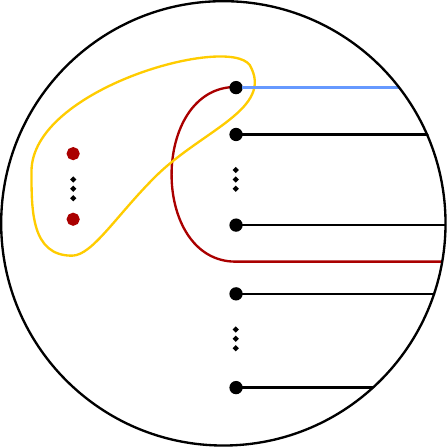}
\caption{The construction of \Cref{lemma:armextends}. Here we introduce some graphical conventions we will use throughout: $\C_\kappa$ will be denoted as a disk with the boundary collapsed to the point $\infty$, roots will be marked in black, and critical points will be marked in red. The ARM $A''$ is depicted as a collection of arcs from $\infty$ to the roots. There are $\abs{\psi(\beta_i)}$ zeroes in the region bounded by $\alpha_i, \beta_i$.}
\label{figure:f1}
\end{figure}

Our ultimate interest in in the connectivity of $\bM_\kappa$. To obtain this, it will be necessary to consider a family of auxiliary graphs.

     \begin{definition}[(Relative) Graph of admissible arcs]
            Let $A'$ be a partial ARM. We say that a root $z_i \in \C_\kappa$ is {\em marked} if some arc of $A'$ terminates at $z_i$, and is {\em unmarked} otherwise. The {\em graph of admissible arcs relative to $A'$}, written $\Adm_\kappa(A')$, is the following graph:
            \begin{itemize}
                \item The vertices of $\Adm_\kappa(A')$ consist of admissible arcs from $\infty$ to unmarked roots that are disjoint from $A'$ except at $\infty$,
                \item If there are at least two unmarked roots, then vertices $\alpha, \beta$ are connected by an edge in $\Adm_\kappa(A')$ if they are disjoint except at $\infty$ (and in particular, must terminate at distinct unmarked roots). If there is only one unmarked root, then $\alpha$ and $\beta$ are joined in $\Adm_\kappa(A')$ if they are disjoint except at both endpoints.
            \end{itemize}
        \end{definition}

\subsection{Connectivity of the graph of admissible arcs}

        \begin{lemma}
            \label{lemma:admconnected}
            Let $\kappa = k_1 \geq \dots \geq k_p$ be a partition of $n \ge 3$ with $p \ge 2$ parts. Let $A'$ be a partial ARM, possibly empty. Then $\Adm_\kappa(A')$ is connected.
        \end{lemma}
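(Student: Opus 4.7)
The plan is to establish connectivity by induction on the geometric intersection number $i(\alpha,\beta)$ of pairs of vertices $\alpha,\beta \in \Adm_\kappa(A')$. In the base case $i(\alpha,\beta)=0$, either $\alpha$ and $\beta$ terminate at distinct unmarked roots, in which case they are joined by an edge of $\Adm_\kappa(A')$ directly, or they terminate at the same unmarked root (a situation that does not yield an edge whenever there is more than one unmarked root). In this latter sub-case I would bridge them through a single intermediate admissible arc $\gamma$ going to a different unmarked root, constructed by extending $A' \cup \{\alpha\}$ to a full ARM via \Cref{lemma:armextends} and then adjusting inside the bigon bounded by $\alpha \cup \beta$ so that the new arc is disjoint from both.

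For the inductive step, assume $i(\alpha,\beta)=k\geq 1$ and that all pairs of admissible arcs with strictly smaller intersection number are connected in $\Adm_\kappa(A')$. The goal is to produce $\gamma \in \Adm_\kappa(A')$ with $i(\gamma,\alpha),i(\gamma,\beta)<k$, from which the inductive hypothesis closes the argument. Let $q$ be the intersection of $\alpha$ and $\beta$ nearest along $\alpha$ to its terminal root $z_\alpha$, and let $\gamma_0$ be the surgered arc that begins at $\infty$, follows $\beta$ up to $q$, and then follows $\alpha$ from $q$ to $z_\alpha$. After a small isotopy $\gamma_0$ is properly embedded, disjoint from $\beta$ except at $\infty$, has $i(\gamma_0,\alpha)<k$, and remains disjoint from $A'$ (since both $\alpha$ and $\beta$ are).

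The arc $\gamma_0$ will typically not be admissible. I would correct its winding number following the template of \Cref{lemma:armextends}: slide the initial segment of $\gamma_0$ near $\infty$ past suitably chosen distinguished points, using \Cref{lemma:slide} to change $\psi(\gamma_0)$ by their weights. The hypothesis $p\geq 2$ enters here, since the $p$ critical-point weights $k_1,\dots,k_p$ together with the $-1$ contributions from nearby roots span enough integer combinations to reach $\psi=0$ in a single controlled pass; moreover, two critical points provides the room to route the correcting slides cleanly.

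The main obstacle I anticipate is performing this admissibility correction without introducing new intersections with $\alpha\cup\beta\cup A'$: a naive slide will almost always create extra crossings. My plan is to confine the corrections to a small regular neighborhood of $\infty$ (after first isotoping $\gamma_0$ into a standard form there) and route each slide along a thin collar that is disjoint from $A' \cup \alpha \cup \beta$; the existence of such routes is again where $p \geq 2$ is used. A separate, simpler case analysis in a disk neighborhood of the unique remaining unmarked root will be needed to treat the boundary situation in the definition of $\Adm_\kappa(A')$ where there is only one unmarked root and the edge relation requires disjointness at both endpoints.
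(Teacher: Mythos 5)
Your surgery step is sound as far as it goes: taking $\gamma_0$ to follow $\beta$ from $\infty$ to the intersection $q$ nearest $z_\alpha$ along $\alpha$, then $\alpha$ from $q$ to $z_\alpha$, does give an embedded arc with $i(\gamma_0,\beta)=0$ and $i(\gamma_0,\alpha)<i(\alpha,\beta)$, disjoint from $A'$ away from $\infty$. But you have relegated what is in fact the entire technical content of this lemma---making $\gamma_0$ admissible while keeping both intersection numbers below $k$---to a wave of the hand ("route each slide along a thin collar\dots the existence of such routes is again where $p\geq 2$ is used"), and that step does not go through in the way you describe. The winding number of $\gamma_0$ is a global quantity, essentially the signed weight of the distinguished points swept between $\gamma_0$ and $\alpha$, and it can be an arbitrary integer. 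A "correction confined to a small regular neighborhood of $\infty$" does not exist: changing $\psi$ requires the arc to sweep past distinguished points, and repositioning the basepoint near $\infty$ only lets you cross one root at a time (changing $\psi$ by $\pm 1$), each time moving the tangent direction one sector over. When no root lies on the needed side, or the supply of slides runs out before $\psi$ reaches $0$, the correcting isotopy is forced to wrap all the way around $\infty$, and this can and does recreate crossings with $\beta$. The paper's proof confronts exactly this: in the inductive step (its Figure 7, case (B)), when there is no marked root on the required side, the corrected arc $\gamma$ satisfies $i(\beta,\gamma)=i(\alpha,\beta)$, so the intersection number does \emph{not} drop, and the argument instead tracks a sign change and iterates. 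Your plan to produce $\gamma$ with both $i(\gamma,\alpha)<k$ and $i(\gamma,\beta)<k$ in one pass does not survive this obstruction, and nothing in the proposal engages with it.

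Two further structural points. First, the paper does not run a single unified induction. It uses an entirely different argument for $A'=\emptyset$ (connectivity of a complex of "simple envelopes" via the Putman trick, then bridging), and a third argument with a different base case (coherent intersection) when only one root is unmarked. You should at minimum explain why your induction survives the $A'=\emptyset$ regime, where there are no marked roots to reposition past; that is exactly the setting where the naive reduction has the least room. Second, your stated reason that $p\geq 2$ matters---that the critical-point weights "span enough integer combinations"---is not how the hypothesis is actually used. You cannot freely pick up critical-point weights without routing the arc through the interior and paying in intersections; the real use in the paper is the bound $k_p\leq n-2$, which guarantees the existence of a suitable bridging arc in the $i=1$ base case (the paper's Lemma on arcs with $i(\alpha,\beta)=1$). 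In short: your skeleton is reasonable, but the lemma is hard precisely at the admissibility-correction step, and the proposal does not contain an argument there.
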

        This is the most intricate and technically demanding step of the argument. We will require three different arguments for three different regimes: the case of $A'$ empty, the case of $A'$ marking at most $n-2$ of the $n$ roots, and the case of $A'$ marking $n-1$ roots.

        \subsubsection{Case 1: $A'$ empty}
        The methods here will are reminiscent of other connectivity arguments used in the study of framed/$r$-spin mapping class groups, cf \cite[Section 7]{toric} and \cite[Section 5.3]{strata3}. The basic principle is to exploit the connectivity of a different graph of ``enveloping subsurfaces'' which is easier to establish, and then build a path in the original graph by exploiting existence results for objects of the desired type inside the enveloping surfaces.

        \begin{definition}[(Graph of) simple envelopes]
            A {\em simple envelope} on $\C_\kappa$ is a properly-embedded arc $E$ with both endpoints at $\infty$, such that on one side, $E$ encloses exactly two distinguished points, each of order $\pm 1$, at least one of which is a root (i.e. of order $-1$). For simplicity, we will think of $E$ as the boundary of this distinguished region.
            
            The {\em graph of simple envelopes} $\Env_\kappa$ is the graph with vertices given by isotopy classes of simple envelopes, with $E$ and $E'$ adjacent if they are disjoint except at $\infty$.
        \end{definition}

        \begin{lemma}
            \label{lemma:envconnected}
            Let $n \ge 3$ and let $\kappa = k_1 \geq \dots \geq k_p$ be a partition with $p \ge 2$ parts. Then $\Env_\kappa$ is connected.
        \end{lemma}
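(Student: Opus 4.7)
The plan is to prove $\Env_\kappa$ is connected by induction on the geometric intersection number $i(E, E')$ of two simple envelopes, measured away from the common basepoint $\infty$. The base case $i(E, E') = 0$ is immediate: two envelopes disjoint away from $\infty$ are adjacent in $\Env_\kappa$ by definition.

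For the inductive step, suppose $E, E' \in \Env_\kappa$ are in minimal position with $i(E, E') > 0$. I would construct an intermediate envelope $E'' \in \Env_\kappa$ satisfying $E \cap E'' = \{\infty\}$ (so $E$ and $E''$ are adjacent) and $i(E'', E') < i(E, E')$. The exterior region $D_E^c$ bounded by $E$ is a disk with $\infty$ on its boundary and $n+p-2 \ge 3$ distinguished points in its interior. A preliminary check using $n \ge 3$ and $p \ge 2$ shows that $D_E^c$ always contains an \emph{eligible pair} of distinguished points (two points of weight $\pm 1$ with at least one a root): if $E$ encloses two roots, there are still $n-2 \ge 1$ roots and $p \ge 2$ critical points in $D_E^c$, and if $E$ encloses a root and a weight-$1$ critical point, there are $n-1 \ge 2$ remaining roots. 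In every case one can form an eligible pair.

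To control the intersection with $E'$, observe that the arcs of $E' \cap D_E^c$ partition $D_E^c$ into subregions, and the arc of $E'$ leaving $\infty$ on the $D_E^c$ side determines a distinguished subregion $R$ adjacent to $\infty$. The plan is to locate an eligible pair inside a single subregion of $D_E^c \setminus E'$ accessible from $\infty$, so that an envelope $E''$ built entirely within that subregion satisfies $E'' \cap E' = \{\infty\}$, i.e.\ $i(E'',E') = 0$. If no single $\infty$-adjacent subregion contains an eligible pair, one instead routes $E''$ through a minimal chain of subregions, accepting some transverse crossings with $E'$ but strictly fewer than $i(E,E')$ — the point being that the number of $E'$-arcs one must cross to reach an eligible pair in $D_E^c$ is bounded above by the number of arcs of $E' \cap D_E^c$, which is in turn less than $i(E,E')$ since $E'$ also must return to $\infty$.

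The main obstacle is the combinatorial case analysis verifying that $E''$ can be realized with $i(E'',E') < i(E,E')$ in all distributions of distinguished points across subregions. The tight regime is $n+p = 5$ (i.e.\ $n=3$, $\kappa = \{1,1\}$), where the distinguished points are scarce; here one leverages the explicit knowledge of the pair $\{x',y'\}$ enclosed by $E'$ and the constrained topology of $E \cup E'$ on a sphere with only six marked points to locate a suitable $E''$ by direct inspection. Throughout, the hypothesis $p \ge 2$ is essential: it both guarantees the existence of eligible pairs in every $D_E^c$ and provides enough distinguished points on the ``far side'' of $E$ to make the routing argument succeed.
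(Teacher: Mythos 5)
Your overall strategy---induction on the geometric intersection number $i(E,E')$, interpolating by an envelope $E''$ disjoint from $E$ with strictly smaller intersection with $E'$---is a legitimate alternative to the paper's approach, which instead applies the Putman trick to the $PB_\kappa$-action on $\Env_\kappa$ and reduces connectivity to two finite checks (that the base vertex meets every orbit, and that generators of $PB_\kappa$ move the base vertex within its component). But as written, your inductive step has a genuine gap, in two places.

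First, the arithmetic supporting the ``routing'' bound does not give strict decrease. If $i(E,E')=k$, the intersection points cut $E'$ into $k+1$ segments that alternate sides of $E$, and the end segments (both issuing from $\infty$) can land in either disk. In the case where one end segment lies in $D_E$ and the other in $D_E^c$ (so $k$ is odd), the number of arcs of $E'\cap D_E^c$ is $(k+1)/2$, which for $k=1$ equals $k$, not less than it. Second, and more seriously, even granting the count, $E''$ is an arc from $\infty$ to $\infty$ that must enclose the eligible pair: if the pair sits $m$ subregions away from the $\infty$-adjacent region, a naive routing crosses each separating arc of $E'$ twice (once inbound, once outbound), yielding up to $2m$ crossings rather than $m$. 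These double-crossings do not automatically cancel into bigons, because the eligible pair sits between them. So you have not actually produced an $E''$ with $i(E'',E')<i(E,E')$, and the deferred ``direct inspection'' for the tight case $n+p=5$ is precisely the content you would need to supply. To rescue this line of argument you would either need a genuine surgery lemma producing a shorter envelope (and verifying the resulting region is still an eligible pair---which does not follow from a naive outermost-arc surgery, since the enclosed weight can go wrong), or else a more careful choice of eligible pair (e.g.\ one already enclosed by $E'$, forcing $i(E'',E')=0$) together with a separate argument that such a pair exists disjoint from $E$. The paper's Putman-trick argument sidesteps all of this by trading the intersection-number bookkeeping for the known generating set of $PB_\kappa$ by Dehn twists, at which point all the necessary verifications are low-complexity pictures.
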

        \begin{proof}
            This will follow by an application of the {\em Putman trick} \cite[Lemma 2.1]{putmantrick}. This asserts the following: let $G$ be a group acting on a graph $X$ with generating set $S=S^{-1}$. Let $v$ be a vertex of $X$. Suppose that the $G$-orbit of every vertex intersects the connected component of $v$, and that for all $s \in S$, there is a path connecting $v$ to $s\cdot v$. Then $X$ is connected. 

            \begin{figure}[ht]
\labellist
\small
\pinlabel $v$ at 58 150
\pinlabel \textcolor{myred}{$w$} at 100 150
\pinlabel \textcolor{myblue}{$v'$} at 160 150
\pinlabel \textcolor{myred}{$w$} at 420 150
\pinlabel \textcolor{myblue}{$a$} at 400 90
\endlabellist
\includegraphics[width=\textwidth]{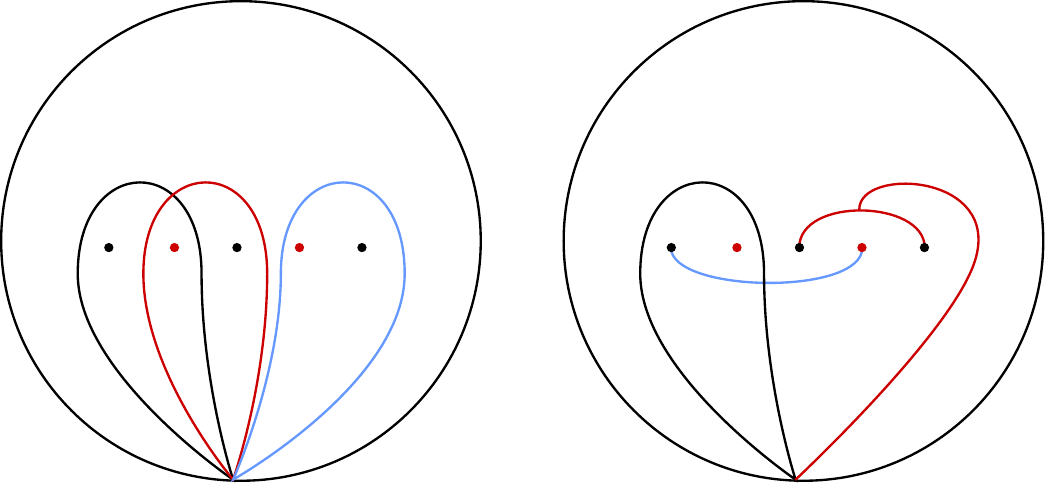}
\caption{We illustrate the arguments here in the maximally constrained case $n = 3, \kappa = \{1,1\}$. At left, showing that $v$ is connected to every orbit of $PB_\kappa$ on $\Env_\kappa$. The sequence $v, v', w$ is a path in $\Env_\kappa$; the arguments for other orbit types are analogous. At right, exhibiting a path connecting $v$ to $T_a(v)$, where $a$ is a neighborhood of the indicated arc. These are both disjoint from a regular neighborhood of $w$, which forms a simple envelope.}
\label{figure:putmantrick}
\end{figure}

            We consider $X = \Env_\kappa$ the graph of simple envelopes, and consider the action of $G = PB_\kappa$ on $\Env_\kappa$. Let us specify the basepoint vertex $v$; this will require special consideration in low-complexity cases. For $n \ge 5$, we take $v$ to be an envelope enclosing two roots. The remaining cases are $n = 3, \kappa=\{1,1\}$, and $n = 4$, $\kappa = \{2,1\}$ or $\{1,1,1\}$. In all of these cases, there is a critical point of order $1$, and we take $v$ to be an envelope enclosing a root and such a critical point.
            
            We first verify that $v$ is connected to a representative of every orbit of $\PB_\kappa$. By the change-of-coordinates principle, such an orbit is classified by the two enclosed distinguished points. If these points are all distinct, it is trivial to exhibit a simple envelope in the given orbit disjoint from $v$. Otherwise, there is exactly one point in common. By our choice of $v$, there is at least one root not enclosed by either envelope. It is again trivial to exhibit a simple envelope (containing at least one root) disjoint from $v$ and from the orbit representative $w$. See \Cref{figure:putmantrick}.

            The second condition to check is that $v$ can be connected to $s\cdot v$ for all $s \in S$. We take for $S$ the standard generating set for $\PB_\kappa$, consisting of Dehn twists in a neighborhood of a system of $\binom{n+p}{2}$ arcs, one for each pair of distinguished points. If the points enclosed by the twisting curve are disjoint from or coincide with those enclosed by $v$, then $s\cdot v = v$. Otherwise, they overlap in one point. As in the previous paragraph, there is at least one additional root, and then it is easy to exhibit a simple envelope $w$ disjoint from $v$ and from the support $a$ of the twist; see \Cref{figure:putmantrick}. Thus, $v, w, T_a(v)$ is a path in $\Env_\kappa$.
        \end{proof}

        \begin{lemma}
            \label{lemma:adminside}
            Let $E$ be a simple envelope. Then there is some admissible arc $\alpha$ contained on the distinguished side of $E$.
        \end{lemma}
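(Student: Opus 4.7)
\medskip

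\noindent\textbf{Proof plan for \Cref{lemma:adminside}.} The plan is to produce an admissible arc inside the distinguished disk $D$ bounded by $E$ by first producing any arc there and then correcting its winding number using the sliding move of \Cref{lemma:slide}, carried out entirely within $D$.

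First, I would unpack the structure of $D$. By definition of a simple envelope, $D$ is a disk with exactly two marked interior points, namely a root $z$ of weight $-1$ and a second distinguished point $p$ of weight $\pm 1$ (either another root, weight $-1$, or a critical point of weight $+1$); the point $\infty$ lies on $\partial D$. Hence there exists a properly-embedded arc $\alpha_0 \subset D$ from $\infty$ to $z$ avoiding $p$. Set $m = \psi(\alpha_0) \in \Z$.

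Next, I would apply the sliding operation of \Cref{lemma:slide} inside $D$. Choose a simple closed curve $c \subset D$ enclosing both $z$ and $p$ (and no other distinguished points); $\infty$ lies on the far side of $c$, in accordance with the orientation convention for the twist-linearity formula. Let $\alpha_{j}$ denote the arc obtained from $\alpha_0$ by applying $T_c^{j}$, supported entirely in $D$. By \Cref{lemma:slide},
\[
\psi(\alpha_j) = \psi(\alpha_0) + j \cdot w(p) = m + j \cdot w(p).
\]
Since $w(p) = \pm 1$, I can choose $j = \mp m$ to achieve $\psi(\alpha_j) = 0$, yielding the desired admissible arc in $D$.

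The proof is essentially mechanical once one verifies that the sliding construction can be localized inside $D$; this is the only step requiring any care, and it is immediate because the supporting curve $c$ and all intermediate Dehn twists can be chosen in the interior of $D$. I do not anticipate a genuine obstacle.
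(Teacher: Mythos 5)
Your proof is correct and takes essentially the same approach as the paper: pick any arc $\alpha_0$ to a root in the distinguished disk, then twist about a curve $c$ inside that disk enclosing both marked points, and use twist-linearity (equivalently, the sliding lemma) to see that each twist changes the winding number by $\pm 1$, so some power $T_c^j(\alpha_0)$ is admissible. The only cosmetic difference is that the paper fixes $\pair{c,\beta}=1$ explicitly, while you leave the sign implicit and absorb it into the choice of $j$; both are fine.
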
 
        \begin{proof}
            Let $\beta$ be any arc connecting $\infty$ to one of the roots contained on the distinguished side of $E$. Let $c$ be a curve contained inside $E$ enclosing both distinguished points, and with $\pair{c, \beta} = 1$. Then by twist-linearity, applying $T_c$ alters the winding number of $\beta$ by $\pm 1$, the sign being determined by the order of the other point. Thus, some twist $T_c^k(\beta)$ is admissible and contained inside $E$.
        \end{proof}

        \begin{proof}[Proof of \Cref{lemma:admconnected} for $A'$ empty]
            Let $\alpha, \beta \in \Adm_\kappa$ be given. Choose envelopes $E_\alpha, E_\beta \in \Env_\kappa$ containing $\alpha,\beta$, respectively. By \Cref{lemma:envconnected}, there is a path $E_\alpha = E_1, \dots, E_n = E_\beta$ in $\Env_\kappa$. By \Cref{lemma:adminside}, each $E_i$ for $1<i<n$ contains an admissible arc $\alpha_i$, and by construction, each $\alpha_i, \alpha_{i+1}$ are disjoint except at the common basepoint $\infty$. Thus $\alpha$ and $\beta$ are connected in $\Adm_\kappa$ via the path $\alpha = \alpha_1, \alpha_2, \dots, \alpha_n = \beta$.
        \end{proof}

    \subsubsection{Case 2: $A'$ nonempty, $\ge 2$ unmarked roots}        In the sequel, we will consider the intersection number of arcs that share one or more endpoint. As always, we define the {\em geometric intersection number} $i(\alpha,\beta)$ to be the minimal number of crossings as $\alpha, \beta$ range through their isotopy classes, keeping in mind that the tangent vectors of $\alpha, \beta$ at endpoints are not required to be fixed under isotopy.

    Ultimately, we will prove this case by induction on $i(\alpha,\beta)$. In preparation for this, we establish connectivity for small values of $i(\alpha, \beta)$.

        \begin{lemma}\label{lemma:i0sameendpt}
            Let $\kappa = k_1 \ge \dots \ge k_p$ be a partition of $n \ge 3$ with $p \ge 2$ parts. Let $A'$ be a partial ARM for which at least two roots are left unmarked. Let $\alpha, \beta \in \Adm_\kappa(A')$ be given with the same set of endpoints, with $i(\alpha, \beta) = 0$. Then $\alpha$ and $\beta$ are connected in $\Adm_\kappa(A')$. 
        \end{lemma}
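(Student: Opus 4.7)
The aim is to exhibit a length-two path $\alpha\,\text{---}\,\gamma\,\text{---}\,\beta$ in $\Adm_\kappa(A')$: that is, I will produce an admissible arc $\gamma$ ending at some unmarked root $z'\ne z$, disjoint from $A'\cup\alpha\cup\beta$ except at $\infty$. Such a $\gamma$ is automatically adjacent to both $\alpha$ and $\beta$ in the graph (the adjacency rule in the $\ge 2$-unmarked-root case only requires disjointness away from $\infty$ and distinct terminal roots). The existence of a target $z'\ne z$ is clear: the hypothesis leaves at least two unmarked roots, and $z$ is one of them, so some other unmarked root is available.

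The first reduction is to assume $\alpha\ne\beta$ as isotopy classes. Under $i(\alpha,\beta)=0$ and shared endpoints, after smoothing the corners at $\infty$ and $z$, the union $\alpha\cup\beta$ is an embedded simple closed curve on $S^2$ and therefore splits $\C_\kappa$ into two open bigons $D_1,D_2$, each necessarily containing at least one distinguished point (otherwise $\alpha$ and $\beta$ would be isotopic). Choose $z'\ne z$ unmarked and (after relabeling, and choosing $z'$ judiciously given which roots lie in which bigon) place $z'\in D_1$. The arcs of $A'$ that lie inside $D_1$ form a partial ARM for the subsurface $D_1$ viewed with the restricted weight data and with $\infty$ at a boundary corner.

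The construction of $\gamma$ inside $D_1$ is the local analogue of \Cref{lemma:armextends}. Start with any arc $\gamma_0$ from $\infty$ to $z'$ contained in $D_1$ and disjoint from $A'\cap D_1$; such $\gamma_0$ exists because the arcs of $A'\cap D_1$ emanate from $\infty$ and leave $D_1$ connected near $z'$. Then modify $\psi(\gamma_0)$ to $0$ by applying Dehn twists $T_c$ on simple closed curves $c\subset D_1\setminus A'$ enclosing $z'$ together with other interior distinguished points of $D_1$; by twist-linearity, enclosing $z'$ with a critical point of weight $k_i$ changes the winding number by $k_i$, enclosing $z'$ with another root changes it by $-1$, and a curve bounding $z'$ together with all other distinguished points of $D_1$ gives the ``big twist'' used in \Cref{lemma:armextends}. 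Combined with the basepoint-repositioning trick at $\infty$ from \Cref{figure:f1}, these operations allow $\psi(\gamma_0)$ to be driven to $0$.

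The main obstacle will be the low-complexity/degenerate configurations where $D_1$ does not contain enough distinguished points (besides $z'$) for the twists above to give full flexibility; the sharpest case is $D_1=\{z'\}$. In such cases I handle things by either (i) switching the role of $D_1$ and $D_2$ and choosing $z'$ to lie instead in $D_2$, which is possible whenever some other unmarked root is available there, or (ii) using the constraint that the loop $\alpha\cup\beta^{-1}$ has winding number zero (forced by $\psi(\alpha)=\psi(\beta)=0$) to extract the weighted-sum identity $1+\sum_{p\in D_1}w(p)=0$ from twist-linearity; this either rules out the pathological distribution entirely or forces the opposite bigon $D_2$ to contain a critical point and additional room in which to carry out the construction. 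Working through this short case analysis completes the proof.
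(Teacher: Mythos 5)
Your overall strategy matches the paper's: produce a length-two path $\alpha-\gamma-\beta$ by constructing an admissible $\gamma$ terminating at some other unmarked root $z'$ inside one of the two bigons of $\alpha\cup\beta$, using the twist/reposition machinery from \Cref{lemma:armextends}.

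There is a genuine gap, however, in how you handle the degenerate case you correctly flag. The weighted-sum identity you extract is wrong: transporting $\alpha$ to $\beta$ across the bigon $D_1$ via repeated applications of \Cref{lemma:slide} gives
\[
0 = \psi(\beta)-\psi(\alpha) = \sum_{p\in D_1} w(p),
\]
not $1+\sum_{p\in D_1}w(p)=0$. (If instead you want to invoke twist-linearity directly, note that $\pair{\alpha\cup\beta,\,\alpha}=0$, so that formula gives no constraint; the $+1$ in your version has no source.) The correct identity is exactly the observation the paper opens its proof with: on each side of $\alpha\cup\beta$, the number of enclosed roots equals the total order of enclosed critical points. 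Once you have this, the case analysis you propose to ``work through'' simply evaporates: if the bigon $D_1$ contains the unmarked root $z'$, it contains $r_1\ge 1$ roots, hence critical points of total order $q_1=r_1\ge 1$, so the twist used to adjust $\psi(\gamma_0)$ is always available. In particular, $D_1=\{z'\}$ is impossible, so there is nothing to rule out. Your fallback strategy (i) also does not always apply --- if every unmarked root other than $z$ lies in $D_1$ then $D_2$ offers no alternative --- so the argument as written leaves the degenerate case unresolved. Fixing the sign in the identity repairs the proof and brings it into alignment with the paper's.
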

        \begin{proof}
        We begin with a general observation. Let $\alpha, \beta$ be arcs with the same set of endpoints and with $i(\alpha, \beta) = 0$, but not necessarily admissible. Then $\alpha$ is isotopic to $\beta$ via an isotopy that drags $\alpha$ across each root or critical point enclosed by $\alpha \cup \beta$; each such point is crossed exactly once and with the same orientation. Via \Cref{lemma:slide}, $\abs{\psi(\alpha) - \psi(\beta)}$ is given as the sum of the weights of the enclosed points. 

         If moreover $\alpha, \beta$ is admissible, this shows that on each side of $\C_\kappa\setminus\{\alpha, \beta\}$, the sum of the orders of the enclosed critical points is equal to the number of enclosed roots. By assumption, there is at least one unmarked root on one side. \Cref{figure:f2} then shows how to construct $\gamma \in \Adm_\kappa(A')$ adjacent to both $\alpha, \beta$, via essentially the same construction as in \Cref{lemma:armextends}.
       \begin{figure}[ht]
\labellist
\small
\pinlabel \textcolor{myblue}{$\beta$} at 160 180
\pinlabel \textcolor{myred}{$\alpha$} at 80 90
\pinlabel \textcolor{mypurple}{$\gamma'$} at 160 160
\pinlabel \textcolor{mygreen}{$c$} at 83 138
\pinlabel \textcolor{myyellow}{$\gamma$} at 160 103
\pinlabel $\abs{\psi(\gamma')}<q$ at 240 130
\endlabellist
\includegraphics[scale=1]{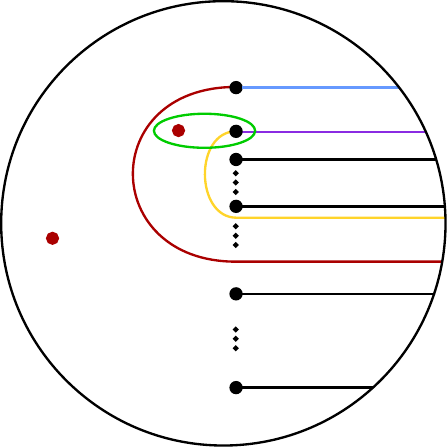}
\caption{The construction of \Cref{lemma:i0sameendpt}.}
\label{figure:f2}
\end{figure}
         
         Examining the figure, the total order of the critical points enclosed by $\alpha \cup \beta$ is $q$, for some $0 \le q \le n-1$. In fact, $0 < q < n-1$: were this not strict, $\alpha$ and $\beta$ would be isotopic, since, as remarked above, the total order of the critical points on either side equals the number of enclosed roots, so absence of one type of distinguished point enforces the absence of the other. There may be multiple critical points inside, but only one is illustrated here for clarity. Possibly some of the roots depicted as marked are in fact unmarked, but this has no effect on the argument. To construct $\gamma$, connect the free root to $\infty$ inside $\alpha \cup \beta$ by some arc $\gamma'$, then twist about $c$ as shown to arrange $1-q \le \psi(\gamma') \le 0$. By ``repositioning the basepoint'' as in \Cref{lemma:armextends}, an admissible arc $\gamma$ as shown can be constructed.
        \end{proof}
 
We will also need to examine connectivity for $i(\alpha, \beta) = 1$, subject to some special additional hypotheses (these will arise naturally in the inductive step).

        \begin{lemma}\label{lemma:i1sameendpt}
            Let $\kappa = k_1 \ge \dots \ge k_p$ be a partition of $n \ge 3$ with $p \ge 2$ parts. Let $A'$ be a partial ARM for which at least two roots are left unmarked. Let $\alpha, \beta \in \Adm_\kappa(A')$ be given with the same set of endpoints and $i(\alpha, \beta) = 1$, so that $\alpha \cup \beta$ divides $\C_\kappa$ into three components, two of which are bigons bounded by one segment each from $\alpha, \beta$. Suppose that on the interior of each of these bigons, there is exactly one root and no other distinguished point. Then $\alpha$ and $\beta$ are connected in $\Adm_\kappa(A')$. 
        \end{lemma}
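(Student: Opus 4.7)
My plan is to construct an intermediate admissible arc $\gamma$ that is adjacent in $\Adm_\kappa(A')$ to both $\alpha$ and $\beta$, yielding a length-$2$ path. Since $\alpha, \beta$ share both endpoints, $\gamma$ must terminate at some unmarked root distinct from $z$. Denote by $z_1, z_2$ the unique roots interior to the $\infty$-bigon and the $z$-bigon respectively. A preliminary observation is that $z_2$ is forced to be unmarked: the $z$-bigon meets the rest of $\C_\kappa$ only along the interiors of $\alpha_2$ and $\beta_2$ together with the two vertices $r$ and $z$, so any arc from $\infty$ to $z_2$ would have to cross $\alpha \cup \beta$, which is impossible for arcs of $A'$. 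Similarly $z$ itself is unmarked.

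The natural candidate is to take $\gamma$ to be the (unique up to isotopy) arc from $\infty$ to $z_1$ lying inside the $\infty$-bigon; it is automatically disjoint from $\alpha, \beta$ except at $\infty$. Assuming $z_1$ is unmarked, the main step is to verify that $\psi(\gamma)=0$. The calculation is via \Cref{lemma:slide}: one considers the arc obtained by concatenating $\gamma$ with a short continuation from $z_1$ to $r$ inside the $\infty$-bigon followed by $\alpha_2$, which, after suitable perturbation across $z_1$, is isotopic to $\alpha$. Since the $\infty$-bigon contains only the weight-$(-1)$ point $z_1$ and no critical points, \Cref{lemma:slide} applied to the two possible perturbations of this concatenation gives values differing by $w(z_1)=-1$, and combined with $\psi(\alpha)=0$ this pins down $\psi(\gamma)=0$.

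When $z_1$ is already marked in $A'$, by uniqueness of the bigon arc that marking arc must coincide with $\gamma$, and then $z_1$ is not available as an endpoint. In this subcase I would instead construct $\gamma$ as an admissible arc to another unmarked root. If there is an unmarked root in the exterior region of $\alpha \cup \beta$, produce $\gamma$ inside the exterior (which contains all the critical points) using the technique of \Cref{lemma:armextends} to arrange $\psi(\gamma)=0$. Otherwise the only remaining unmarked option is $z_2$, and one replaces the length-$2$ path by a short walk $\alpha, \delta_1, \delta_2, \delta_3, \beta$ in which $\delta_1, \delta_3$ are admissible arcs to $z_2$ reached from $\infty$ by passing through the exterior and crossing $\beta$ (respectively $\alpha$) near $z$, and $\delta_2$ is an admissible arc to $z$ disjoint from both $\delta_1$ and $\delta_3$; the needed $\delta_2$ exists because the $\delta_i$ can be chosen to hug the $z$-end of $\alpha \cup \beta$, leaving room for $\delta_2$ in the remainder.

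I expect the main obstacle to be the winding-number computation for the bigon arc, which is delicate because the $\infty$-bigon itself provides no room to adjust winding numbers (there are no critical points inside it), so the argument must lean entirely on the global admissibility of $\alpha$ together with the twist-linearity/sliding identities; the secondary difficulty is the disjointness bookkeeping in constructing the longer admissible walk in the degenerate subcase where $z_1$ is marked and no exterior unmarked root is available.
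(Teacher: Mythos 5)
The central step of your argument fails: you claim that the bigon arc $\gamma$ from $\infty$ to $z_1$ automatically has $\psi(\gamma)=0$, but this does not follow from the admissibility of $\alpha$ and $\beta$, and is in fact false in general. The sliding lemma and twist-linearity relate winding numbers only of arcs sharing the same endpoints. Your concatenation $\gamma \cdot (z_1\to r)\cdot \alpha_2$, once perturbed off $z_1$, is an arc from $\infty$ to $z$, and the two perturbations do indeed have winding numbers $0$ and $\pm 1$; but this says nothing about $\psi(\gamma)$, which is evaluated on an arc to the \emph{different} root $z_1$. There is no additivity of $\psi$ under concatenation in this framework (the middle segment is not even in $\cA_\kappa$), so ``this pins down $\psi(\gamma)=0$'' is a non sequitur. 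Concretely, by \Cref{lemma:specifyphi} the values of $\psi$ on a root marking are $n$ independent integers; taking $A'=\emptyset$ and a root marking containing both $\alpha$ and $\gamma$, one can choose $\psi$ with $\psi(\alpha)=0$ and $\psi(\gamma)=17$. A sliding computation across $z_1$ (on one side of $\alpha$) and $z_2$ (on the other side) shows that $\psi(\beta)=\psi(\alpha)=0$ is then forced, so all hypotheses of the lemma hold while $\psi(\gamma)\neq 0$. Your own closing remark — that the $\infty$-bigon ``provides no room to adjust winding numbers'' — is exactly the symptom: with $\gamma$ aimed at $z_1$ and confined to the bigon, you have no control at all over $\psi(\gamma)$.

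The paper's proof avoids this by constructing $\gamma$ to terminate at the \emph{same} root $z$, with $i(\alpha,\gamma)=i(\beta,\gamma)=0$, routed through the exterior region so that $\beta\cup\gamma$ encloses only the smallest-order critical point $k_p$ together with $k_p$ roots. Because $\gamma$ shares its endpoints with $\alpha$ and $\beta$, its winding number \emph{can} be compared to $\psi(\alpha)$ via twist-linearity, and the enclosed-point count is chosen precisely to make $\gamma$ admissible. The inequality $k_p\le n-2$ (since $k_p$ is the smallest of $p\ge 2$ parts summing to $n-1$) ensures enough roots are available for the construction, and the conclusion then follows from two applications of \Cref{lemma:i0sameendpt}. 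Your fallback subcases (for $z_1$ marked) inherit the same difficulty and additionally rely on unverified disjointness claims, but the main gap is the unjustified admissibility of the bigon arc.
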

                \begin{figure}[ht]
\labellist
\small
\pinlabel \textcolor{myblue}{$\beta$} at 160 160
\pinlabel \textcolor{myred}{$\alpha$} at 100 130
\pinlabel \textcolor{myred}{$k_p$} at 90 110
\pinlabel \textcolor{myyellow}{$\gamma$} at 57 120
\endlabellist
\includegraphics[scale=1]{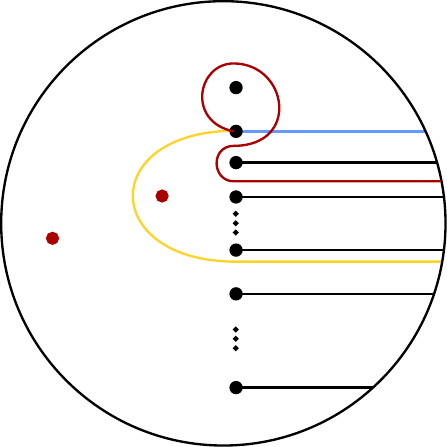}
\caption{The construction of \Cref{lemma:i1sameendpt}. }
\label{figure:f2point5}
\end{figure}
        \begin{proof}
          \Cref{figure:f2point5} shows how to construct $\gamma \in \Adm_\kappa(A')$ terminating at the same endpoint and with $i(\alpha,\gamma) = i(\beta, \gamma) = 0$. Examining the figure, we see that $\beta \cup \gamma$ encloses only the critical point of smallest order $k_p$ along with the root marked by $\alpha$ and $k_p-1$ other roots (marked or otherwise). Such $\gamma$ exists only if there are $k_p-1$ other roots available, distinct from the three roots involved in $\alpha\cup \beta$. This is always true: since $k_p$ is the smallest of at least two integers whose sum is $n-1$, necesarily $k_p \le n-2$. The result now follows from \Cref{lemma:i0sameendpt}.
        \end{proof}

        \begin{proof}[Proof of \Cref{lemma:admconnected}, $A'$ nonempty but $\ge 2$ unmarked roots]
        Let $\alpha, \beta \in \Adm_\kappa(A')$ be given. We will proceed by induction on $i(\alpha,\beta)$. First suppose that $\alpha, \beta$ terminate at the same root. If $i(\alpha, \beta) = 0$, then $\alpha,\beta$ are connected by \Cref{lemma:i0sameendpt}. Otherwise, replace $\beta$ with an admissible arc $\beta'$ terminating at a different root and adjacent to $\beta$ in $\Adm_\kappa(A')$ (such $\beta'$ always exists, by \Cref{lemma:armextends}). Thus we will assume in the sequel that $\alpha$ and $\beta$ terminate at different roots.

        \begin{figure}[ht]
\labellist
\small
\pinlabel (A) at 200 280
\pinlabel (B) at 460 280
\pinlabel (C) at 720 280
\pinlabel (D) at 200 20
\pinlabel (E) at 460 20
\pinlabel (F) at 720 20
\pinlabel $r$ at 225 380
\tiny
\pinlabel \textcolor{myblue}{$\beta$} at 170 425
\pinlabel \textcolor{myred}{$\alpha$} at 140 440
\pinlabel \textcolor{myred}{$q$} at 100 365
\pinlabel \textcolor{myyellow}{$\gamma$} at 310 340
\endlabellist
\includegraphics[width=\textwidth]{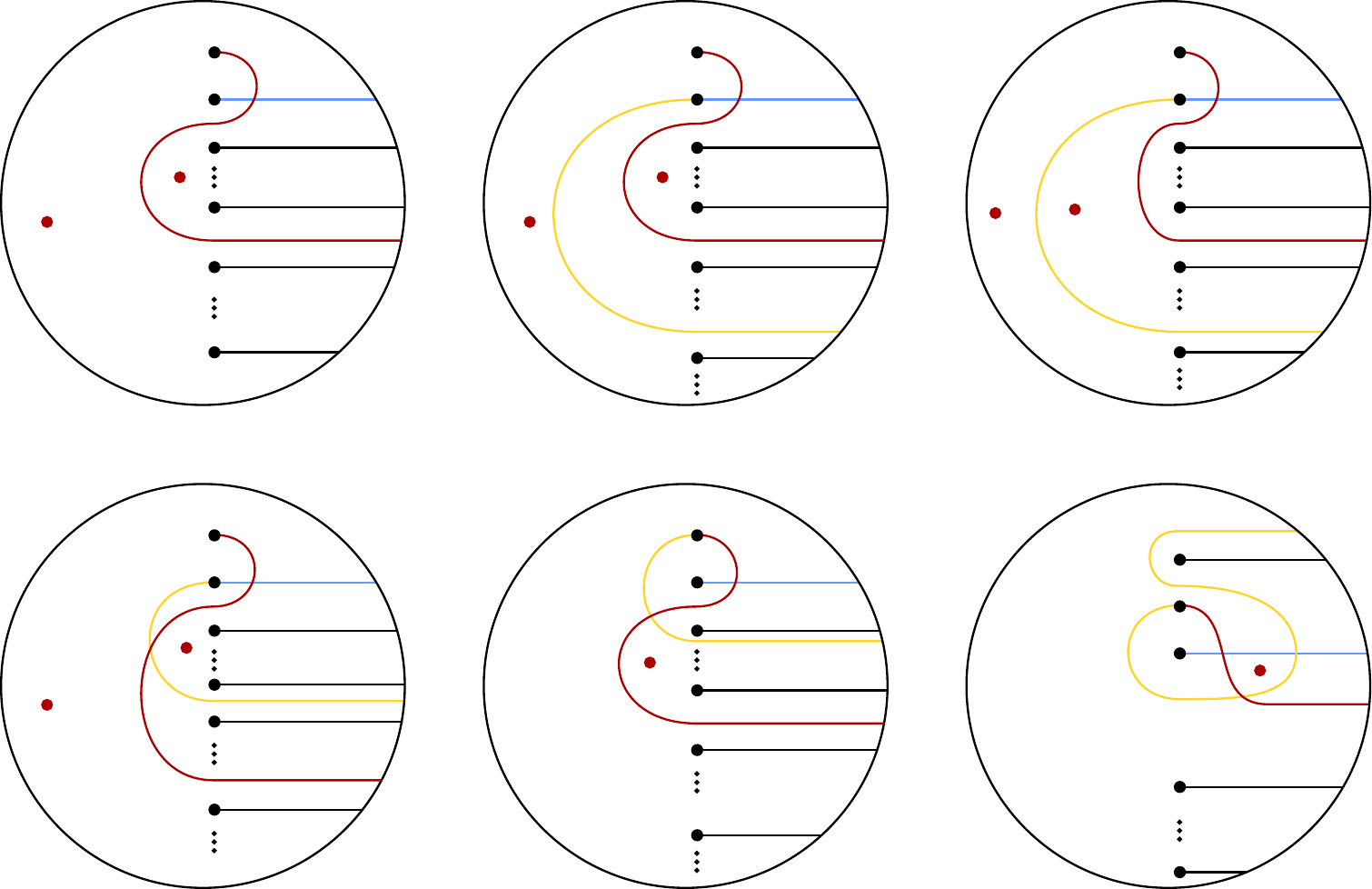}
\caption{The case $i(\alpha, \beta) = 1$.}
\label{figure:f3}
\end{figure}

        We take $i(\alpha, \beta) \le 1$ as base cases. In the case $i(\alpha,\beta) = 0$, the arcs are adjacent in $\Adm_\kappa(A')$. In the case $i(\alpha, \beta) = 1$, \Cref{figure:f3} shows how to connect $\alpha, \beta$. In (A), we see that, possibly after exchanging $\alpha, \beta$, the picture can be arranged so as to be of this form. The total order of critical points enclosed by $\alpha,\beta$ is $q$, and the number of roots enclosed is $r$. Panels (B)-(F) then consider various subcases depending on $q,r$: 
        \begin{enumerate}[(A)]
        \setcounter{enumi}{1}
        \item The case $r \le q \le n-2$. Here, admissible $\gamma$ can be constructed as shown, so that $\beta \cup \gamma$ encloses the same set of critical points and $q$ roots.
        \item The case $q = 0$. Here, $\gamma$ is constructed so as to enclose all but one critical point (of order $k$) along with $n-1-k \le n-2$ roots.
        \item (The case $0 < q < r$. Here, $\gamma$ is constructed so as to enclose the critical points of total order $q$ along with $q$ roots. $\alpha$ and $\gamma$ then satisfy the hypotheses of case (C), ultimately giving a path connecting $\alpha$ to $\beta$.
        \item The case $q = n-1$, $r >0$. Admissible $\gamma$ is constructed disjoint from $\beta$, and satisfying the hypotheses of \Cref{lemma:i1sameendpt} with $\alpha$.
        \item The case $q = n-1, r = 0$. Here, admissible $\gamma$ is constructed so that $\alpha, \gamma$ satisfy the hypotheses of \Cref{lemma:i1sameendpt}, and $\beta, \gamma$ belong to case (C).
        \end{enumerate}
        
        \begin{figure}[ht]
\labellist
\small
\pinlabel (A) at 200 20
\pinlabel (B) at 460 20
\pinlabel \textcolor{myred}{$\alpha$} at 170 50
\pinlabel \textcolor{myyellow}{$\gamma$} at 170 90
\pinlabel \textcolor{myblue}{$\beta$} at 175 140
\endlabellist
\includegraphics[width = \textwidth]{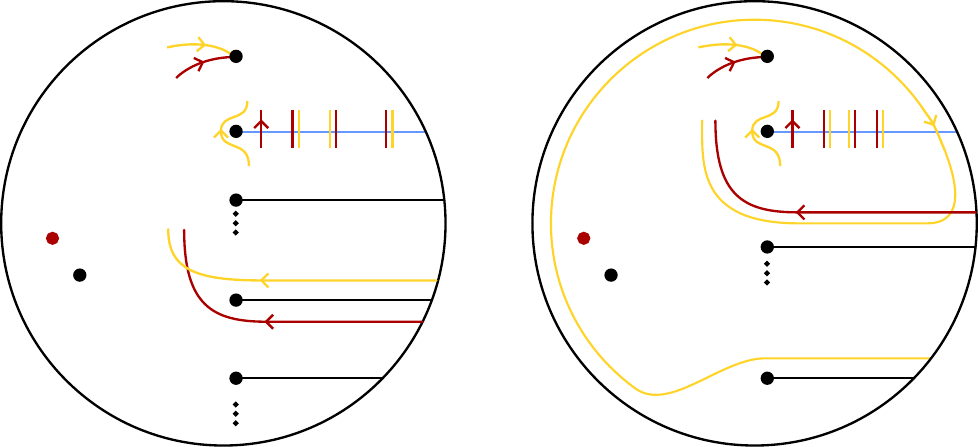}
\caption{Reducing intersection number.}
\label{figure:f4}
\end{figure}

        We now assume that if $\xi, \eta \in \Adm_\kappa(A')$ terminate at distinct zeroes and satisfy $i(\xi, \eta) \le N$, then $\xi,\eta$ are connected in $\Adm_\kappa(A')$, and consider $\alpha, \beta \in \Adm_\kappa(A')$ with $i(\alpha, \beta) = N+1$. \Cref{figure:f4} shows how to construct $\gamma$ terminating at the same root as $\beta$, with $i(\beta, \gamma) = 1$ and satisfying the hypotheses of \Cref{lemma:i1sameendpt}, and with $i(\alpha,\gamma)< i(\alpha,\beta)$. The figure treats one possibility for the sign at the left-most intersection of $\alpha, \beta$; the other case is analogous. In (A): as long as there is a marked root lying in between $\alpha, \beta$ at $\infty$, admissible $\gamma$ can be constructed as shown, by sliding the left side of $\alpha$ across the endpoint of $\beta$ and compensating by sliding the right side (repositioning the base point) near $\infty$. By construction, $i(\beta,\gamma) < i(\alpha, \beta)$, and $\alpha,\gamma$ satisfy the hypotheses of \Cref{lemma:i1sameendpt}, hence are connected in $\Adm_\kappa(A')$. In (B), we consider the case where there is no such marked root to the right of $\alpha$. Here, construct admissible $\gamma$ as shown, by dragging the basepoint around a neighborhood of $\infty$ and repositioning. As before, $\alpha, \gamma$ satisfy the hypotheses of \Cref{lemma:i1sameendpt}, but here, $i(\beta, \gamma) = i(\alpha, \beta)$. However, one of the crossings has changed sign. Moving to the first crossing of $\beta, \gamma$, one repeats the argument. At some point, one will encounter a crossing of the opposite sign; the argument of (A) will then apply, decreasing intersection number.

        By the inductive hypothesis, $\alpha$ and $\gamma$ are connected in $\Adm_\kappa(A')$, and by \Cref{lemma:i1sameendpt}, $\beta$ and $\gamma$ are likewise connected, thus completing the inductive step.
        \end{proof}

\subsubsection{Case 3: one unmarked root}

        \begin{proof}[Proof of \Cref{lemma:admconnected}, $A'$ nonempty and one unmarked root]
        Let $\alpha, \beta \in \Adm_\kappa(A')$ be given, necessarily both terminating at the single unmarked root. We say that $\alpha, \beta$ {\em have coherent intersection} if there exist representatives for which every intersection has the same sign. To show that $\alpha, \beta$ are connected, we will proceed by induction on $i(\alpha, \beta)$, taking the case of coherent intersection as base case (note that if $i(\alpha,\beta) = 1$, then the intersection is necessarily coherent). 

\begin{figure}[ht]
\labellist
\small
\pinlabel \textcolor{myred}{$\alpha$} at 240 200
\pinlabel \textcolor{myyellow}{$\gamma$} at 210 60
\pinlabel $r$ at 300 127
\pinlabel ${\ge kq+r}$ at 318 60
\tiny
\pinlabel \textcolor{myred}{$q$} at 200 177
\pinlabel \textcolor{myred}{$kq+r$} at 270 160
\endlabellist
\includegraphics[scale=0.95]{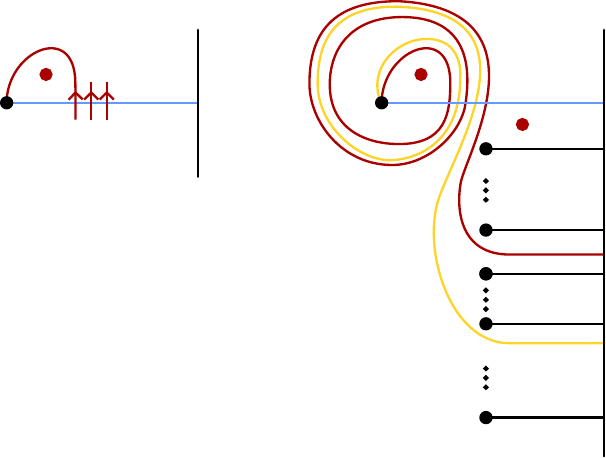}
\caption{Reducing intersection number, one unmarked root, coherent intersection.}
\label{figure:f5}
\end{figure}

        \Cref{figure:f5} shows how to connect $\alpha, \beta$ in the case of coherent intersection. At left, we see that necessarily the leftmost crossing must be as shown (enclosing at least one critical point and terminating immediately at the unmarked root), as otherwise the leftmost crossing would have nowhere to go. Repeatedly applying this reasoning, we arrive at the global picture of $\alpha$ at right (possibly there are additional critical points inside the spiraling portion, but this does not affect the argument). Suppose the critical points enclosed by the bigon formed by the terminal segments of $\alpha, \beta$ have total order $q$, and that $i(\alpha, \beta) = k$. By twist-linearity, in order for $\alpha$ to be admissible, the region of $\C_\kappa$ enclosed by the initial segments of $\alpha, \beta$ contains $r \ge 0$ roots and one or more critical points of order totalling $kq+r$. Thus $n \ge (k+1)q + r +1$, so that there are at least $(k+1)q+r$ marked roots. It is therefore possible to construct admissible $\gamma$ as indicated by sliding the leftmost crossing over the bigon (reducing intersection number) and repositioning the basepoint to the left (down, in the figure) by $q$ positions. This completes the analysis in the case of coherent intersection.

\begin{figure}[ht]
\labellist
\tiny 
\pinlabel \textcolor{myred}{$\alpha$} at 85 70
\pinlabel \textcolor{myyellow}{$\gamma$} at 105 55
\pinlabel $x$ at 89 55
\pinlabel $y$ at 97 55
\endlabellist
\includegraphics[width = \textwidth]{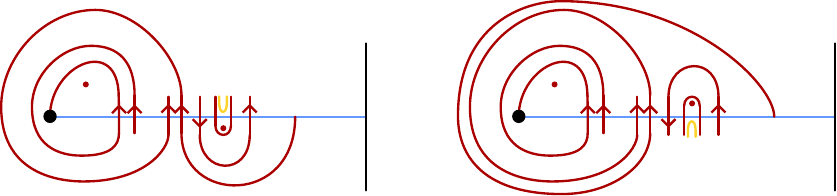}
\caption{Reducing intersection number, one unmarked root, incoherent intersection.}
\label{figure:f6}
\end{figure}
        For the general case, consider \Cref{figure:f6}. Let $y$ denote the first crossing from the left pointing opposite to the leftmost crossing, and let $x$ denote the crossing immediately to the left of $y$. \Cref{figure:f6} shows that regardless of the type of the segment feeding into $x$, one of the segments coming in or out of $y$ must bound a bigon. Passing to the innermost bigon, one constructs admissible $\gamma$ satisfying $i(\alpha, \gamma) = 0$ and $i(\beta, \gamma) < i(\alpha, \beta)$, by pulling $\alpha$ across all critical points in the innermost bigon and repositioning the basepoint of $\gamma$ as in \Cref{figure:f4} (not shown). As in that argument, it may be necessary to wrap $\gamma$ around the boundary of the disk, introducing another crossing with $\beta$. But since $i(\beta,\gamma)$ decreases by at least two by removing the bigon, so too in this case does $i(\beta, \gamma)$ strictly decrease. This completes the inductive step.
        \end{proof}
        
\subsection{Connectivity of the graph of ARMs}
        Having established the connectivity of graphs of a single arc at a time, we now consider the main graph that will feature in the proof of \Cref{theorem:main}, the graph of ARMs.

        \begin{definition}[Relative graph of ARMs]
            Let $A'$ be a partial ARM, possibly empty. The {\em graph of ARMs relative to $A'$}, written $\bM_\kappa(A')$, is the complete subgraph of $\bM_\kappa$ on vertices $A$ given as extensions of $A'$.
        \end{definition}

        \begin{proposition}\label{lemma:armconnected}
            Let $\kappa = k_1 \geq \dots \geq k_p$ be a partition of $n \ge 3$ with $p \ge 2$ parts. Let $A'$ be a partial ARM, possibly empty. Then the relative graph of ARMs $\bM_\kappa(A')$ is connected.
        \end{proposition}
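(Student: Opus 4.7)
The plan is to prove connectivity of $\bM_\kappa(A')$ by induction on $m \coloneq n - |A'|$, the number of unmarked roots. The base case $m = 0$ is vacuous, since then $A = A' = B$ is forced. The main tool of the induction is \Cref{lemma:admconnected}, which lets us swap one arc at a time.

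For the inductive step, given ARMs $A, B$ extending $A'$, I would pick any arc $\alpha \in A \setminus A'$ and any arc $\beta \in B \setminus A'$; these are vertices of $\Adm_\kappa(A')$, which is connected by \Cref{lemma:admconnected}, so they are joined by a path of admissible arcs $\alpha = \gamma_0, \gamma_1, \dots, \gamma_k = \beta$. When $m = 1$, all the $\gamma_i$ terminate at the same (unique) unmarked root, so each $A' \cup \{\gamma_i\}$ is already a full ARM, and by the definition of the edges of $\Adm_\kappa(A')$ in this case (disjoint except at both endpoints), consecutive $A' \cup \{\gamma_i\}$, $A' \cup \{\gamma_{i+1}\}$ are adjacent in $\bM_\kappa(A')$ directly. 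This yields a path from $A$ to $B$.

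When $m \ge 2$, consecutive $\gamma_i, \gamma_{i+1}$ terminate at distinct unmarked roots and are disjoint off $\infty$, so $A' \cup \{\gamma_i, \gamma_{i+1}\}$ is a partial ARM with two more arcs than $A'$. \Cref{lemma:armextends} supplies an extension to an ARM $A_i^*$. Both $A_{i-1}^*$ and $A_i^*$ extend the intermediate partial ARM $A' \cup \{\gamma_i\}$, which has strictly more arcs than $A'$, so by the inductive hypothesis they are joined in $\bM_\kappa(A' \cup \{\gamma_i\})$, and \emph{a fortiori} in $\bM_\kappa(A')$. The same inductive hypothesis, applied to $A' \cup \{\gamma_0\}$ and $A' \cup \{\gamma_k\}$ respectively, links $A$ to $A_0^*$ and $B$ to $A_{k-1}^*$. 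Concatenating gives a path $A \to A_0^* \to A_1^* \to \cdots \to A_{k-1}^* \to B$ in $\bM_\kappa(A')$.

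The real work for proving connectivity of the full ARM complex has already been absorbed into \Cref{lemma:admconnected}: once the (quite delicate) single-arc connectivity is in hand --- especially the one-unmarked-root case with the incoherent-intersection analysis --- the present statement is a mechanical bootstrap. The only point to verify carefully is that in the $m \ge 2$ regime, $A' \cup \{\gamma_i\}$ really is a partial ARM with $|A'| + 1$ arcs (the single arc $\gamma_i$ terminates at one of at least two available unmarked roots, so no collision with $A'$ occurs), so that $m$ strictly decreases and the induction terminates.
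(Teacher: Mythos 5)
Your proposal is correct and follows essentially the same route as the paper: induct on the number of unmarked roots, use \Cref{lemma:admconnected} to obtain a path of single admissible arcs, extend successive pairs to ARMs via \Cref{lemma:armextends}, and link them through the relative complexes over the intermediate partial ARMs $A' \cup \{\gamma_i\}$. The only cosmetic difference is that you take $m=0$ as the (vacuous) base case and absorb $m=1$ into the inductive step, whereas the paper takes $m=1$ as the base case by observing that $\bM_\kappa(A')$ and $\Adm_\kappa(A')$ literally coincide there; the substance is identical.
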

        \begin{proof}
            We proceed by induction on the number $m$ of unmarked roots. In the base case $m = 1$, one verifies that the definitions of the graphs $\bM_\kappa(A')$ and $\Adm_\kappa(A')$ coincide, so that connectivity of $\bM_\kappa(A')=\Adm_\kappa(A')$ follows from \Cref{lemma:admconnected}.
            
            For the inductive step, for $k = n-m$, write $A' = \{\alpha_1, \dots, \alpha_k\}$, and let $A = \{\alpha_1, \dots, \alpha_n\}$ and $B = \{\alpha_1, \dots, \alpha_k, \beta_{k+1}, \dots, \beta_n\}$ be vertices of $\bM_\kappa(A')$. By \Cref{lemma:admconnected}, there is a path $\alpha_{k+1} = \gamma_0, \gamma_1, \dots, \gamma_q = \beta_{k+1}$ in $\Adm_\kappa(A')$, and as $m > 1$, each pair of successive admissible arcs $\gamma_i, \gamma_{i+1}$ terminate at distinct unmarked roots. For $0 \le i \le q-1$, define 
            \[
            A'_i = A' \cup \{\gamma_i, \gamma_{i+1}\}.
            \]
            By \Cref{lemma:armextends}, each $A_i'$ extends to an ARM $A_i$; also define $A_{-1} = A$ and $A_q = B$. 

            Each pair $A_i, A_{i+1}$ forms a pair of vertices in the relative graph $\bM_\kappa(A'\cup \{\gamma_{i+1}\})$. By induction, there is a path in $\bM_\kappa(A'\cup \{\gamma_{i+1}\})$ connecting $A_i$ to $A_{i+1}$; these paths can be concatenated to yield a path from $A = A_{-1}$ to $B = A_q$.
        \end{proof}

\section{Logarithmic derivatives and translation surface structures on the Riemann sphere}\label{section:logdiff}

In this section we recall the correspondence between polynomials and translation surface structures on the Riemann sphere. See also the treatment in \cite{stratbraid1}. 

\subsection{Polynomials and (root-labeled) translation surfaces}
Here we study the relationship between polynomials and the geometric world of translation surfaces. 

\para{From polynomials to translation surfaces..} Given $f \in \Poly_n(\C)[\kappa]$, we consider the logarithmic derivative $df/f$. This is a meromorphic differential on $\CP^1$ with $n+1$ simple poles at $\infty$ and at the distinct roots $z_1, \dots, z_n$. By the argument principle, the residues at the roots are each $2\pi i$, while the residue at $\infty$ is $-(2 \pi i)n$. The zeroes of $df/f$ occur at the critical points $w_1, \dots, w_p$ and have multiplicities $k_1 \ge \dots \ge k_p$ as specified by the partition $\kappa$.

Let $\mathcal{MD}(\kappa)$ denote the set of meromorphic differentials on $\CP^1$ with $n+1$ simple poles, $n$ of which have residue $2 \pi i$, and with $p$ zeroes of multiplicities specified by $\kappa$. According to \cite[Lemma 2.1]{stratbraid1}, every $\omega \in \mathcal{MD}(\kappa)$ is of the form $\omega = df/f$ for a uniquely-specified $f \in \Poly_n(\C)[\kappa]$, and there is an isomorphism of quasi-projective varieties
\[
\Poly_n(\C)[\kappa] \cong \mathcal{MD}(\kappa).
\]

Integration of $df/f$ endows $\CP^1$ (punctured at the roots of $f$ and $\infty$) with the structure of an infinite-area {\em translation surface} (for more on the basics of translation surfaces and their moduli spaces, see \cite[Section 3]{stratbraid1} or e.g. \cite{wright}). Let $\Omega_\kappa$ denote the moduli space of translation surfaces associated to $df/f$ for $f \in \Poly_n(\C)[\kappa]$ considered, as usual, up to cut-paste equivalence. As shown in \cite[Theorem 1.5]{stratbraid1}, $\Omega_\kappa$ is a complex orbifold of dimension $p-1$. We notate elements of $\Omega_\kappa$ as pairs $(T,\omega)$, where $T$ is a Riemann surface homeomorphic to an $n+1$-punctured sphere, and $\omega$ is a meromorphic differentials with the appropriate profile of poles, residues, and zeroes.

The orbifold structure can be understood explicitly as follows. The affine group $\Aff = \C \rtimes \C^*$ acts by precomposition on the space $\mathcal{MD}(\kappa)$ (or equivalently on $\Poly_n(\C)[\kappa]$), and for $n \ge 2$, all stabilizers are finite. Then $\Omega_\kappa$ is realized as the orbifold quotient
\[
\Omega_\kappa = \mathcal{MD}(\kappa)/\Aff \cong \Poly_n(\C)[\kappa]/\Aff.
\]

\para{...and back again} Translation surfaces are useful here because one can explicitly exhibit deformations (by moving the free prongs) and so probe the fundamental group. The drawback to working with $\Omega_\kappa$ directly is that it is an orbifold (and so one must work in the setting of the orbifold fundamental group), and in any event is a {\em quotient} of $\Poly_n(\C)[\kappa]$, so that there is potential ambiguity in lifting elements of the (orbifold) fundamental group.

To resolve this, we show here how to understand $\mathcal{MD}(\kappa)$ as a space of {\em root-labeled translation surfaces}. This will circumvent the need to reckon with orbifolds, while still allowing for the powerful deformation arguments available in the translation surface setting.

We temporarily pass to finite covers of $\mathcal{MD}(\kappa)$ and $\Omega_\kappa$. Define the cover
\[
\mathcal{MD}(\kappa)_2 = \left\{\left(\tfrac{df}{f}, z_1, z_2\right) \mid \tfrac{df}{f} \in \mathcal{MD}(\kappa),\ z_1,z_2 \in \C,\ z_1 \ne z_2, \ f(z_1) = f(z_2) = 0\right\}
\]
of differentials endowed with two distinguished roots. Likewise define $\Omega_{\kappa,2}$ as the cover of $\Omega_\kappa$ where two of the poles of residue $2 \pi i$ are distinguished. Note that this is a manifold (indeed, smooth variety) and not merely an orbifold, since the automorphism group of $\C$ marked at two points is trivial.

\begin{lemma}\label{lemma:coveriso}
    There is an isomorphism of complex manifolds
    \[
    RL: \mathcal{MD}(\kappa)_2 \to \Omega_{\kappa,2}\times \Conf_2(\C).
    \]
    % Moreover, given $((S, \omega, p_1, p_2), (z_1,z_2)) \in \Omega_{\kappa,2}\times \Conf_2(\C)$, there is a unique $f \in \Poly_n(\C)[\kappa]$ with $f(z_1) = f(z_2) = 0$ and a unique  isomorphism of translation surfaces
    % \[\alpha: (\C \setminus \{\mbox{roots of }f\}, df/f) \to (S,\omega)
    % \]
    % identifying $p_1, p_2$ with $z_1, z_2$.
\end{lemma}
\begin{proof}
    There is a tautological assignment of $(df/f, z_1,z_2) \in \MD(\kappa)_2$ to the point
    \[
    ((\C \setminus Z(f), df/f, z_1, z_2),(z_1,z_2)) \in \Omega_{\kappa,2} \times \Conf_2(\C),
    \]
    where $Z(f) \subset \C$ denotes the roots of $f$.
    Conversely, suppose $((T,\omega, p_1, p_2),(z_1,z_2)) \in \Omega_{\kappa,2} \times \Conf_2(\C)$ is given. By basic complex analysis (see \cite[Lemma 2.1]{stratbraid1}), there is a polynomial $f$ and an isomorphism of translation surfaces
    \[
    \alpha: (\C \setminus Z(f), df/f) \to (T,\omega).
    \]
  Composing with the appropriate element of $\Aff$, there is a {\em unique} such $\alpha$ for which the distinguished points $p_1, p_2$ are identified with the chosen $z_1, z_2 \in \C$, defining the inverse map.
\end{proof}

As it stands, this identification requires the additional data of a choice of pair of points. This can be accounted for by introducing an equivalence relation on $\Omega_{\kappa,2} \times \Conf_2(\C)$. We define an equivalence relation as follows: 
\[
((T, \omega, p_1, p_2),(z_1,z_2)) \sim ((T', \omega', p_1', p_2'),(z_1',z_2'))
\]
if
\begin{enumerate}
    \item There is a (necessarily unique) conformal isomorphism of pointed translation surfaces 
    \[
    \iota: (T, \omega, p_1, p_2) \to (T',\omega',p_1',p_2'),
    \]
    and hence each of $(T, \omega)$ and $(T', \omega')$ are isomorphic to $(\C\setminus Z(f), df/f)$ for the same $df/f \in \MD(\kappa)$ (and $f(z_1) = f(z_2) = f(z_1') = f(z_2') = 0$),
    
    \item Under the unique map $\alpha: \C \setminus Z(f) \to T\cong T'$ identifying $z_1,z_2$ with $p_1, p_2$, also $z_1', z_2'$ are identified with $p_1',p_2'$.
\end{enumerate}
We call the equivalence classes {\em root-labeled translation surfaces}, and notate the space of such as
\[
\Omega_{\kappa}^{RL} := \left(\Omega_{\kappa,2}\times \Conf_2(\C)\right)/\sim.
\]

\begin{lemma}
    The equivalence class map
    \[
    \Omega_{\kappa,2}\times \Conf_2(\C) \to \Omega_\kappa^{RL}
    \]
    is a covering map, and the root-marking isomorphism $ RL: \mathcal{MD}(\kappa)_2 \to \Omega_{\kappa,2}\times \Conf_2(\C)$ of \Cref{lemma:coveriso} descends to an isomorphism
    \[
    \bar{RL}: \MD(\kappa) \to \Omega_\kappa^{RL}.
    \]
\end{lemma}
\begin{proof}
    Under the homeomorphism $RL^{-1}: (\Omega_{\kappa,2}\times \Conf_2(\C)) \to \MD(\kappa)_2$, a sufficiently small open set corresponds to a family of polynomials whose roots vary in pairwise-disjoint open sets of $\C$, with two of these neighborhoods distinguished by the given marking. In the topology on the covering space $\MD(\kappa)_2$, two such neighborhoods with different distinguished components are disjoint. The equivalence class of some $((T,\omega,p_1,p_2),(z_1,z_2))$ consists of the same translation surface $(T,\omega)$ with different pairs of poles marked by the roots of the underlying polynomial. Neighborhoods of each representative correspond under $RL^{-1}$ to the same set of polynomials but with different distinguished components, demonstrating the covering space condition. It is then straightforward to see that $RL$ preserves fibers of each of the covering maps, and so descends to the isomorphism
    \[
    \bar{RL}: \MD(\kappa) \to \Omega_\kappa^{RL}
    \]
    as claimed.
\end{proof}

\subsection{Strip decomposition} Translation surfaces in $\Omega_\kappa$ have a very simple structure. Following \cite[Section 3]{stratbraid1}, we recollect this here. The reader may wish to consult \Cref{figure:stripdecomp} while reading this discussion. Let $T \in \Omega_\kappa$ be a translation surface. $T$ carries a natural ``horizontal'' singular foliation induced by the kernel of the real $1$-form $\im(\tfrac{df}{f})$. All but finitely many leaves of the horizontal foliation for $T$ run from a zero of the associated polynomial $f$ to $\infty$; the finitely many exceptions have one or more endpoints at a cone point of $T$ (i.e. a critical point of $f$). A leaf with one or more endpoints at cone points is called a {\em prong leaf}. The closure of the set of leaves emanating from a chosen zero of $f$ is called a {\em strip}. As shown in \cite[Lemma 3.1]{stratbraid1}, $T$ is equal to the union of its strips, and each pair of strips intersect in finitely many prong leaves. 

Note that a given root $z$ of a polynomial $f \in \Poly_n(\C)[\kappa]$ has a canonically-associated strip on the translation surface for $\frac{df}{f}$: it is the closure of the set of leaves of the horizontal foliation that terminate at $z$.

\begin{figure}[ht]
\labellist
\pinlabel $\sim$ at 265 100
\pinlabel $\sim$ at 550 100
\tiny
\pinlabel {fixed prong} at 40 -5
\pinlabel {free prong} at 87 105
\pinlabel $z_1$ at 15 145
\pinlabel $z_2$ at 15 105
\pinlabel $z_1$ at 300 60
\pinlabel $z_2$ at 300 20
\pinlabel $z_1$ at 585 60
\pinlabel $z_2$ at 585 20
\endlabellist
\includegraphics[width = \textwidth]{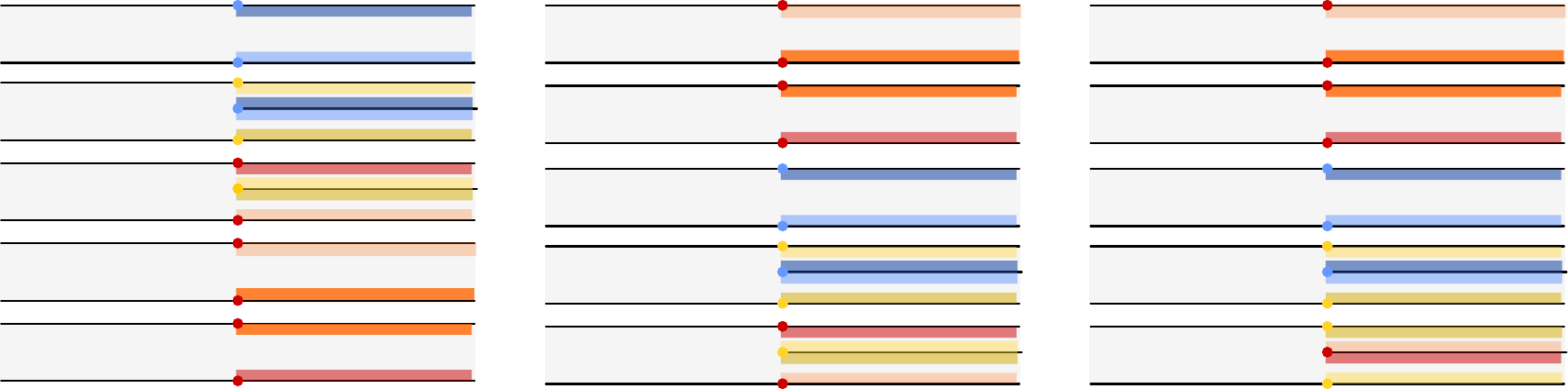}
\caption{Three depictions of the same root-marked translation surface in the stratum $\kappa = \{2,1,1\}$. The top left of each strip is glued to the bottom left, and gluing instructions on the right sides are indicated with colors. The solid colored dots in the middle of each strip are cone points for the flat metric, corresponding to critical points of the polynomial. The excess cone angle corresponds to the order of the critical point, so that e.g. the red point (with a total angle of $6 \pi = 2 \pi + 2(2 \pi)$) corresponds to the critical point of order $2$. From left to center, the strips have been vertically reordered. From center to right, the bottom strip has been recut, exchanging the roles of the fixed and free prongs. The root labeling data is indicated by labeling two distinguished points with $z_1, z_2 \in \C$.}
\label{figure:stripdecomp}
\end{figure}

 Every $T \in \Omega_\kappa$ admits a {\em strip decomposition} that depends on finitely many arbitrary choices. To define this, observe that each zero $z_i$ of $f$ must have at least one prong leaf emanating from $z_i$ (see \cite[Section 3.2]{stratbraid1}). Choosing one such leaf for each zero, $T$ is realizable as a union of $n$ bi-infinite strips in $\C$ of height $2 \pi$, where the top and bottom boundaries of each strip is given by the chosen prong leaf (technically, the boundary of a strip consists of {\em three} prong leaves - the one chosen leaf coming in from the root at left, along with two distinct prong leaves continuing on along the top and bottom to $\infty$ at right). The prongs comprising the boundary of a strip are called {\em fixed prongs}. 

There are then $p-1$ remaining prong leaves running from $\infty$ to a cone point, generically lying in the interior of the strips. These remaining prong leaves are called {\em free prongs}, as they are allowed to deform, changing the translation surface structure. As long as cone points do not collide, such a deformation stays in the stratum $\Omega_\kappa$. The relative periods of the $p-1$ free prongs is a set of local coordinates on $\Omega_\kappa$. When a strip contains multiple cone points, a cut-and-paste move can be used to exchange one free prong for a fixed prong, as illustrated in \Cref{figure:stripdecomp}.

\subsection{Monodromy of root-labeled translation surfaces} \label{subsection:RLmonodromy}
We established above the existence of isomorphisms
\[
\Poly_n(\C)[\kappa]\cong \MD(\kappa) \cong \Omega_\kappa^{RL}.
\]
Consequently there is a monodromy homomorphism
\[
\rho: \pi_1(\Omega_\kappa^{RL}) \to B_{n+p}
\]
pulled back from the natural monodromy map defined on $\pi_1(\Poly_n(\C)[\kappa])$. Here we explain how to compute the monodromy of a loop in $\Omega_\kappa^{RL}$ constructed as an explicit deformation. 

Let us recall the general principle of computing monodromy. Let $p: E \to S^1$ be a fiber bundle with fibers $S_t = p^{-1}(t)$ (for $t \in S^1 = \R/\Z$). Choose a {\em marking} $\mu_0: S \to S_0$ by some reference surface $S$. When, as in our setting, $S_0$ is a sphere marked at $n+p+1$ points (the roots and critical points of some $f$, and $\infty$), $\mu_0$ can be specified uniquely up to isotopy by a collection of $n+p$ disjoint arcs running from $\infty$ to each of the roots and critical points. Via parallel transport, the marking $\mu_0$ propagates to a family of markings $\mu_t: S \to S_t$, well-defined up to isotopy. The monodromy of the family is the map $\mu_1^{-1}\circ \mu_0: S \to S$; it is well-defined as an isotopy class.

There is a crucial subtlety in our setting that must be addressed. The braid group $B_{n+p}$ is defined as the fundamental group of the configuration space $\UConf_{n+p}(\C)$; this is the ultimate target of the monodromy map. However, the marking procedure above only recovers the {\em image} of the braid under the point-pushing homomorphism
\[
P: B_{n+p} \to \Mod(\C_\kappa).
\]
$P$ is not injective, and has infinite cyclic kernel $\pair{\Delta} = Z(B_{n+p})$, the center of $B_{n+p}$ generated by the full twist element $\Delta$. However, this is ultimately a non-issue, since the kernel $\pair{\Delta}$ is in the image of $\rho$, the generator being realized by applying the $S^1$-family of affine maps $z \mapsto e^{i \theta} z$ to any chosen basepoint.

There is an additional benefit to the {\em a priori} containment $\pair{\Delta} \le \im(\rho)$: in computing the mapping-class-group-valued monodromy, it is not actually necessary to track the root labeling data! As illustrated in \Cref{example1}, the root labeling data can become shuffled by cut/paste equivalence, so that in order to construct a closed loop in $\Omega_\kappa^{RL}$, it is necessary to combine the closed loop of {\em unmarked} translation surfaces with a path in the space of root markings. A choice of such path affects the resulting braid, but different choices differ by loops in the space of root markings. As we have seen, this image is contained in $\pair{\Delta}$. Thus, ignoring the root-marking data, the $B_{n+p}$-valued monodromy of a loop of {\em unmarked} translation surfaces in $\Omega_\kappa$ is well-defined up to the subgroup $\pair{\Delta} \le \im(\rho)$, which is all that is needed for our purposes. 
\\

% Recall from the proof of \Cref{lemma:coveriso} that a root-marked surface $((S, \omega, p_1, p_2), (z_1,z_2)) \in \Omega_{\kappa,2} \times \Conf_2(\C)$ admits a canonical identification $\alpha$ with $\C$ punctured at the roots of some $f \in \Poly_n(\C)[\kappa]$. Away from the orbifold locus of $\Omega_\kappa$, this {\em descends} to give a canonical identification of each of the members of the {\em equivalence class} in $\Omega_\kappa^{RM}$, independent of the chosen representative.

% \begin{example}
%     Consider the family $T_\theta$ of root-labeled translation surfaces shown in \red{FIGURE}. As indicated there, $T_0$ and $T_\pi$ determine equivalent points of $\Omega_\kappa^{RL}$, and thus $T_\theta$ is indeed a loop in $\Omega_\kappa^{RL}$. The single fixed translation surface $T$ underlying the family possesses a nontrivial automorphism of order two, and it is easy to see that therefore must correspond to some polynomial of the form $f(z) = z(z^2-a)$ for some $a \in \C^*$. Taking the root-labeling data into account, we see that $T_\theta$ corresponds to $f(z) = z(z^2 - e^{2i\theta})$. The monodromy of such a family is the half-twist braid shown in \red{FIGURE}.
% \end{example}

\begin{example}\label{example1}
    Consider the family $T_t \subset \Omega_\kappa$ of translation surfaces shown in \Cref{figure:ex1}, for $\kappa = \{1,1\}$. The deformation proceeds by pushing the unique free prong up into the next vertically-adjacent strip, and then exchanging the bottom two strips. The marking propagates as shown. To understand this as a braid, identify $T$ with a five-punctured plane via the indicated marking. After moving through the loop, the new marking can be compared against the old, showing that the isotopy class induced by the braid is given as shown, by orbiting two of the roots about a fixed critical point at the center.
\end{example}

\begin{figure}[ht]
\labellist
\pinlabel $\sim$ at 310 260
\endlabellist
\includegraphics[width = \textwidth]{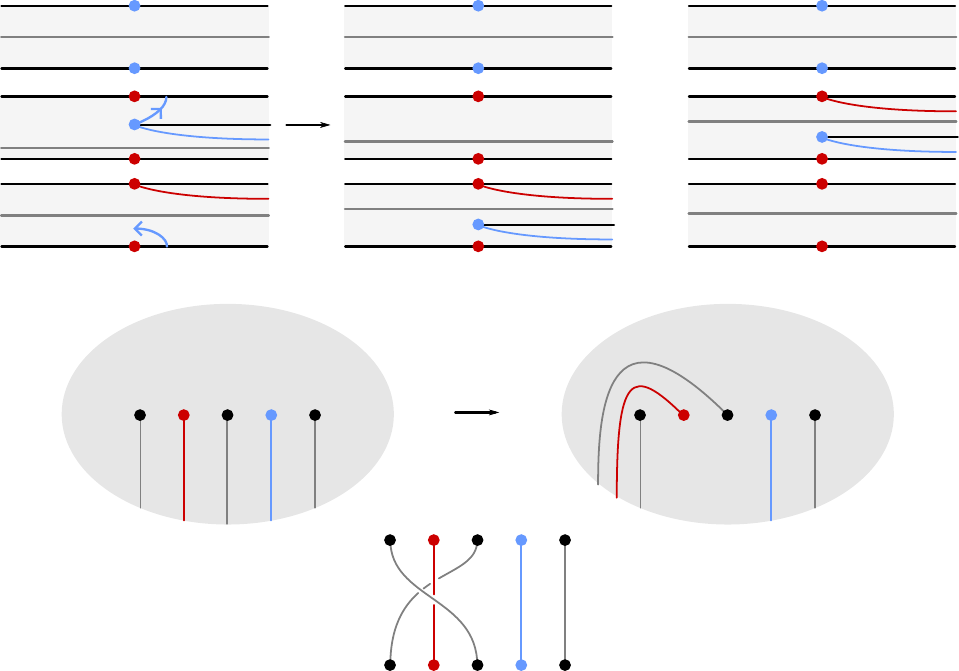}
\caption{Top row: the deformation of \Cref{example1}. The blue free prong is pushed up the top of the middle strip into the bottom, and then these two strips are switched, completing the loop. The blue arcs mark the roots, and the gray arcs mark the critical points. In the middle row, the corresponding marking of a punctured plane is illustrated at left; the effect on the marking is shown at right. The bottom row shows the corresponding braid.}
\label{figure:ex1}
\end{figure}

\subsection{Relative winding number functions on translation surfaces} The bridge connecting the work of \Cref{section:framedbraid,section:connectivity} to the present setting lies in the fact that translation surfaces in $\Omega_\kappa$ endow the marked Riemann sphere $\C_\kappa$ with a distinguished relative winding number function.

\begin{definition}[Logarithmic relative winding number function]\label{definition:lrwnf}
    Let $f_0 \in \Poly_n(\C)[\kappa]$ be chosen, and let $[((T_0,\omega_0,p_1,p_2),(z_1,z_2))] \in \Omega_\kappa^{RL}$ be the associated root-marked translation surface, chosen to lie outside the orbifold locus of $\Omega_\kappa$. As above, there is then a canonical identification $\alpha: \C_\kappa \to T_0$ given by integrating $\frac{df_0}{f_0}$. Under $\alpha$, properly-embedded arcs connecting $\infty$ to a root on $\C_\kappa$ are sent to bi-infinite arcs running from right to left on $T_0$, which can be isotoped so as to be eventually horizontal at both ends. The {\em logarithmic relative winding number function} $\psi_T: \cA_\kappa \to \Z$ is the relative winding number function on $\C_\kappa$ defined by measuring the winding number of the corresponding arc on $T_0$, relative to the horizontal vector field. It is straightforward to verify that $\psi_T$ satisfies the twist-linearity condition - see \cite[Lemma 4.2]{chillingworth2}.
\end{definition}

\section{Proof of \Cref{theorem:main}}\label{section:mainproof}
 Here we bring the settings of framed braid groups and equicritical strata together to prove \Cref{theorem:main}. Recall from the introduction that our ultimate interest is the monodromy representation
 \[
 \rho: \sB_n[\kappa] \to B_\kappa.
 \]
 Define the image
 \[
 \im(\rho) = \Gamma_\kappa \le B_\kappa.
 \]
 In \Cref{subsection:le}, we show that the translation surface structure for $df/f$ constrains $\Gamma_\kappa$ to lie in a framed braid group $B_\kappa[\psi_T]$. Then in \Cref{subsection:ge}, we show the opposite containment. Finally, in \Cref{subsection:maincor}, we improve the main theorem of \cite{stratbraid1}, giving a complete description of the monodromy of just the roots in an equicritical stratum.

\subsection{Monodromy lives in the framed braid group}\label{subsection:le}
\begin{proposition}\label{prop:contained}
    For all $n \ge 3$ and all partitions $\kappa = k_1 \ge \dots \ge k_p$, there is a containment
    \[
    \Gamma_\kappa \le B_\kappa[\psi_T],
    \]
    where $\psi_T$ is the logarithmic relative winding number function of \Cref{definition:lrwnf}.
\end{proposition}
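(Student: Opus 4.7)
The plan is to argue that the logarithmic winding number is preserved along any loop in the stratum by a continuity-plus-integrality argument. Fix a basepoint $f_0 \in \Poly_n(\C)[\kappa]$ with associated translation surface $T_0 \in \Omega_\kappa$ and identification $\mu : \C_\kappa \to T_0$ used in \Cref{definition:lrwnf} to define $\psi_T$. An element $\phi \in \Gamma_\kappa$ is represented by a loop $\gamma : [0,1] \to \Poly_n(\C)[\kappa]$ based at $f_0$, giving rise to a continuous family $\{T_t\}_{t \in [0,1]}$ of translation surfaces in $\Omega_\kappa$, each equipped with its horizontal vector field coming from the real direction on $\C$.

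First I would lift $\gamma$ to a continuous family of marked translation surfaces: starting from $\mu_0 = \mu$, choose a continuous family of identifications $\mu_t : \C_\kappa \to T_t$ carrying the marked points (roots, critical points, and $\infty$) on $\C_\kappa$ to their moving counterparts on $T_t$. Such a family exists because $\Omega_\kappa$ is obtained from $\Poly_n(\C)[\kappa]$ by quotienting by the affine group action, so the underlying Riemann sphere with its marked points varies continuously, and we can transport $\mu$ along the path by using the isotopy extension property applied to the marked-point data. At time $t=1$, the surface $T_1$ equals $T_0$ but the identification $\mu_1$ differs from $\mu_0$ by the mapping class that tracks where the marked points have moved, which by construction is exactly the monodromy $\phi$ (up to conventions on direction). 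In other words, $\mu_1$ is isotopic to $\mu_0 \circ \phi^{-1}$ relative to marked points.

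For an arc $\alpha \in \cA_\kappa$, form the family of arcs $\mu_t \circ \alpha$ on $T_t$, isotoped at each time to be eventually horizontal at both endpoints (permissible since both endpoints have order $-1$, so there is no ambiguity in the integer winding number, as discussed in the remark following the definition of $\psi$). The winding number of $\mu_t \circ \alpha$ relative to the horizontal vector field on $T_t$ is a continuous function of $t$ taking integer values, hence constant. Evaluating at $t=0$ and $t=1$ and using $T_1 = T_0$ gives
\[
\psi_T(\alpha) \;=\; \psi_T\bigl(\phi^{-1}(\alpha)\bigr),
\]
which is exactly the statement that $\phi^{-1}$, and therefore $\phi$, lies in $B_\kappa[\psi_T]$. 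Since $\phi \in \Gamma_\kappa$ was arbitrary, this establishes the containment.

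The main obstacle, such as it is, lies in carefully setting up the continuous family of identifications $\mu_t$ and verifying that the continuity of the horizontal vector fields on $\{T_t\}$ genuinely implies continuity of the winding number integer; this is a routine but slightly delicate check because one must simultaneously keep track of the moving marked points and the asymptotic horizontal tangent directions at the ends of the arc. Once these points are handled, the proof is essentially automatic from the discrete-valued-continuous-function principle.
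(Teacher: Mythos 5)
The paper itself gives no argument here; it simply cites \cite[Lemma 4.6]{stratbraid1}. Your proposal supplies an actual proof, and it is the natural continuity argument one would expect that cited lemma to contain. The reasoning is correct: parallel-transport the marked identification along the monodromy loop, observe that the winding number of $\mu_t\circ\alpha$ relative to the evolving horizontal vector field on $T_t$ is a continuous $\Z$-valued function of $t$, and conclude constancy. The point you correctly flag --- that both endpoints of any arc in $\cA_\kappa$ have order $-1$, so the integer winding number is well-defined without pinning down tangent directions at the ends --- is exactly what keeps the winding number single-valued as the arc is reisotoped to be eventually horizontal at each time, and is thus essential to the continuity claim. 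Two small cleanups: first, rather than taking the family $T_t$ in the quotient $\Omega_\kappa$, it is slightly cleaner to take $T_t$ directly to be the translation surface of $df_t/f_t$ for $f_t = \gamma(t)$, so that $T_1 = T_0$ on the nose and the issue of lifting along the affine quotient never arises; second, the precise sign convention in the relation between $\mu_1$ and $\mu_0\circ\phi^{\pm 1}$ is immaterial since $B_\kappa[\psi_T]$ is a subgroup, so getting $\phi^{-1}\in B_\kappa[\psi_T]$ is just as good as getting $\phi\in B_\kappa[\psi_T]$.
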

\begin{proof}
This was established in \cite[Lemma 4.6]{stratbraid1}.
\end{proof}

\subsection{Monodromy equals the framed braid group}\label{subsection:ge}

Following the discussion of \Cref{subsection:RLmonodromy}, to compute the mapping class group-valued monodromy, it is not necessary to keep track of root-labeling information. Accordingly, we will suppress this throughout, working with unlabeled translation surfaces $\Omega_\kappa$.

\subsubsection{On basepoints} Our first task will be to describe a system of basepoints in $\Omega_\kappa$. It will be convenient to have a whole system of basepoints with different combinatorial properties, to account for all of the various possible ways that the $n$ strips can be attached via slits. Our basepoints will be indexed by {\em orderings} of the set $\{w_1, \dots, w_p\}$ of the critical points, which we enumerate as permutations $\sigma$ of the set $\{1, \dots, p\}$. We will also have occasion to consider a convenient admissible root marking $A_\sigma$ on $T_\sigma$.

\begin{construction}[Basepoint surface $T_\sigma \in \Omega_\kappa^{RL}$, admissible marking $A_\sigma$]\label{construction:T0}
Let $n \ge 2$ and $\kappa = k_1 \ge \dots \ge k_p$ be given, where $\kappa$ is a partition of $n-1$, indexing critical points $w_1, \dots, w_p$ of corresponding order. Let $\sigma$ be a permutation of $\{1, \dots, p\}$. We build $T_\sigma \in \Omega_\kappa$ starting with $n$ strips $S_1, \dots, S_n$, depicted in \Cref{figure:f8} as running from bottom to top. For $1 \le j \le k_{\sigma(1)}+1$, assign the fixed prong in $S_j$ to the critical point $w_{\sigma(1)}$ of order $k_{\sigma(1)}$. Subsequently, for $2 \le m \le p$, assign the fixed prong in strips $S_j$ for $k_{\sigma(1)} + \dots + k_{\sigma(m-1)}+2 \le j \le k_{\sigma(1)} + \dots + k_{\sigma(m)}+1$ to the critical point $w_{\sigma(m)}$ of order $k_m$. We call the set of strips with fixed prong assigned to $w_{\sigma(m)}$ the {\em $m^{th}$ group} of strips. Note the asymmetry in the construction: the first group of strips contains $k_{\sigma(1)}+1$ strips, while for $m \ge 2$, the $m^{th}$ group contains $k_{\sigma(m)}$ strips.

Next we specify the positions of the $p-1$ free prongs. By construction, there will be exactly one free prong assigned to each critical point $w_{\sigma(m)}$ for $m \ge 2$. Place a free prong at $\pi i$ in the topmost strip $S_{k_{\sigma(1)} + \dots + k_{\sigma(m-1)} + 1}$ in the $(m-1)^{st}$ group, assigned to $w_{\sigma(m)}$.

Finally, we specify the gluings. As always, glue the top left and bottom left half-edges of each $S_j$. In the first group, glue the top right of $S_j$ to the bottom right of $S_{j+1}$, including the top of $S_{k_{\sigma(1)}+1}$ to the bottom of $S_1$. For groups $2 \le m \le p$, likewise glue the top right of $S_j$ to the bottom right of $S_{j+1}$, but glue the top right of the topmost strip in the group to the bottom of the slit emanating from the free prong assigned to $w_{\sigma(m)}$, and the bottom right of the bottom-most strip in the group to the top of this slit. See \Cref{figure:f8}. This also depicts the admissible root marking $A_\sigma$ on $T_\sigma$, consisting of a system of admissible arcs at height $\epsilon$ in each of the strips.
\end{construction}

\begin{figure}[ht]
\labellist
\tiny
% \pinlabel $0$ at 5 20
% \pinlabel $1$ at 5 62
\endlabellist
\includegraphics[scale=0.95]{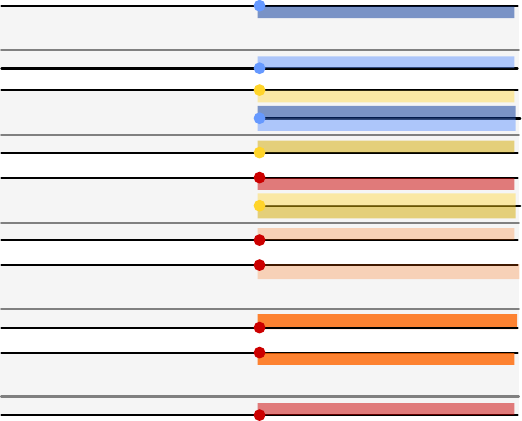}
\caption{The basepoint surface $T_\sigma \in \Omega_\kappa$ of \Cref{construction:T0}, illustrated in the case $n = 5, \kappa = \{2,1,1\}$, ordered so that $w_1$ has order $k_1 = 2$ and is colored red, $w_2$ has order $k_2 = 1$ and is yellow, and $w_3$ has order $k_3 = 1$ and is blue. The gluing instructions on the right halves of the surface are illustrated with matching colors. The admissible root marking $A_\sigma$ is shown as the system of horizontal arcs in gray.}
\label{figure:f8}
\end{figure}

\subsubsection{The half-push move and its consequences} Here we describe an extremely useful family of deformations in $\Omega_\kappa$, the {\em half-push}.

\begin{construction}[Half-push, full push]
Let $w$ and $w'$ be distinct cone points on $T \in \Omega_\kappa$. Suppose that in some strip $S_j$, the fixed prong is assigned to $w$, and there is some small neighborhood of $w$ whose intersection with $S_j$ contains a free prong for $w'$ near the top of the strip, and otherwise contains no other distinguished point. The {\em half-push} deformation takes this free prong for $w'$ and pushes it up the top-right side of $S_j$ into the next vertically-adjacent strip. This new strip is then re-cut so that the prong for $w'$ becomes fixed; the prong for $w'$ (and any other free prongs that might be present) becomes free. A {\em full push} of $w$ about $w'$ is the composition of two half-pushes, the first as described above, and the second with the roles of $w$ and $w'$ reversed. See \Cref{figure:halfpush,figure:halfpush2}.    
\end{construction}

\begin{figure}[ht]
\labellist
\endlabellist
\includegraphics[scale=0.95]{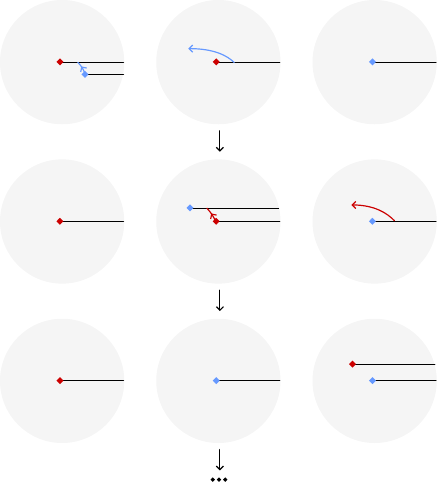}
\caption{A sequence of two half-pushes, local structure. Each row depicts three portions of neighborhoods of the red/blue cone points on a fixed translation surface.}
\label{figure:halfpush}
\end{figure}

\begin{figure}[ht]
\labellist
\endlabellist
\includegraphics[scale=0.95]{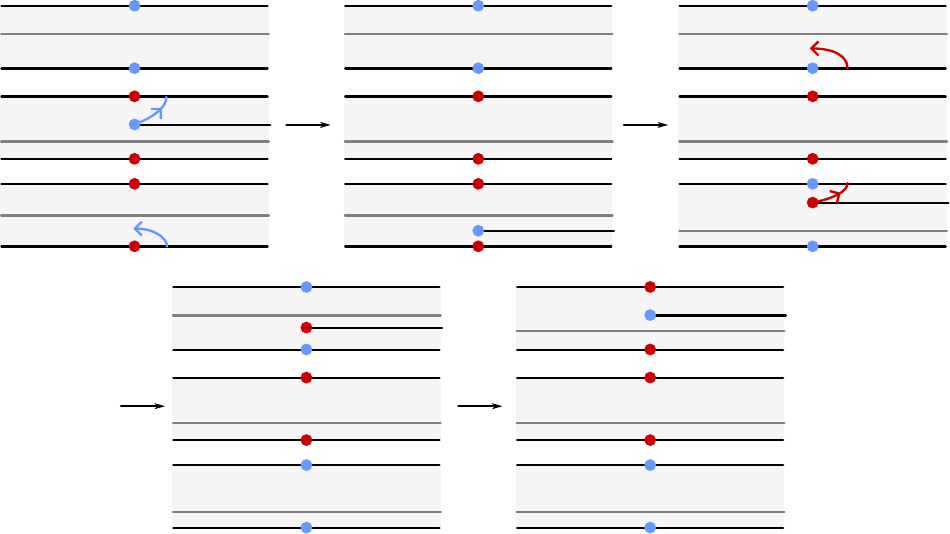}
\caption{A sequence of two half-pushes, with full strips depicted. We alternate between pushing and re-cutting.}
\label{figure:halfpush2}
\end{figure}

Using half-pushes, the following lemma will allow us to freely switch between basepoints as convenient.

\begin{lemma}
    \label{lemma:basepaths}
    For any two permutations $\sigma, \tau$ of $\{1,\dots, p\}$, there is a path in $\Omega_\kappa$ from $T_\sigma$ to $T_\tau$ that takes $A_\sigma$ to $A_\tau$.
\end{lemma}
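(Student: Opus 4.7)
The plan is to reduce to the case of an adjacent transposition and then exhibit an explicit path implemented by half-pushes. Since the symmetric group on $\{1,\dots,p\}$ is generated by adjacent transpositions $(m\ m{+}1)$, and since the relation ``there exists a path in $\Omega_\kappa$ taking $A_\sigma$ to $A_\tau$'' is an equivalence relation (paths can be concatenated and reversed), it suffices to treat the case when $\tau = \sigma \circ (m\ m{+}1)$ for some $1\le m \le p-1$.

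Fix such $\sigma$ and $\tau$. In $T_\sigma$ the $m$-th group consists of $k_{\sigma(m)}+1$ consecutive strips with fixed prong assigned to $w_{\sigma(m)}$, followed immediately by the $(m{+}1)$-st group of $k_{\sigma(m+1)}+1$ consecutive strips with fixed prong assigned to $w_{\sigma(m+1)}$; between the two groups sits the free prong for $w_{\sigma(m+1)}$, attached at the top of the last strip of the $m$-th group. All other groups look identical in $T_\sigma$ and $T_\tau$. The first step is to show that a suitable sequence of half-pushes, taking place entirely inside the union of the $m$-th and $(m{+}1)$-st groups of strips, converts the strip decomposition of $T_\sigma$ into that of $T_\tau$. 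Concretely, I would first do a half-push of the free prong for $w_{\sigma(m+1)}$ across the topmost strip of the $m$-th group, converting one fixed prong of $w_{\sigma(m)}$ into a free prong and making a new fixed prong assigned to $w_{\sigma(m+1)}$. Repeating this $k_{\sigma(m)}+1$ times marches the fixed prongs for $w_{\sigma(m+1)}$ past those for $w_{\sigma(m)}$; during this sweep, each time a free prong for $w_{\sigma(m)}$ is produced, we immediately half-push it upward into what will become the new $m$-th group. After all $k_{\sigma(m)}+1$ strips have been processed, the two blocks of fixed prongs have exchanged vertical positions, and the unique free prong lying between them is (by the construction in \Cref{construction:T0}) the free prong for $w_{\sigma(m+1)}=w_{\tau(m)}$, attached at the top of the last strip of the new $m$-th group. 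The end state is precisely $T_\tau$.

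Next I would verify that this path is indeed a path in $\Omega_\kappa$: each half-push is a continuous deformation in which the cone points $w_1,\dots,w_p$ never collide (the only motion takes place within two adjacent groups, and the cone points involved are distinct by hypothesis $\sigma(m)\ne \sigma(m{+}1)$). Hence the deformation keeps us in the stratum indexed by $\kappa$, as required.

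The remaining step is to track the admissible root marking. By construction $A_\sigma$ is the system of $n$ horizontal arcs at height $\epsilon$ in the $n$ strips of $T_\sigma$, and likewise for $A_\tau$. At each half-push, the new strip decomposition differs from the old one by a cut-paste on the right-hand side, and one easily checks that horizontal arcs at height $\epsilon$ in the old strips correspond under this cut-paste to horizontal arcs at height $\epsilon$ in the new strips, since the cutting and regluing is done along prong leaves far from the small-height horizontal arcs near the left ends of the strips. Hence the marking $A_\sigma$ is carried continuously, strip by strip, through the deformation, and the marking at the endpoint is precisely the horizontal arcs at height $\epsilon$ in the strip decomposition of $T_\tau$, namely $A_\tau$. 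The main obstacle is purely bookkeeping: one must carefully check that after the full sequence of half-pushes, the reassignments of fixed and free prongs match exactly the recipe of \Cref{construction:T0} for $\tau$, and in particular that the single free prong in each group ends up in the correct position (topmost strip of the preceding group) rather than somewhere lower. This is the content of the iterated half-push procedure described above and can be verified directly from \Cref{figure:halfpush,figure:halfpush2}.
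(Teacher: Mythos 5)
Your proposal matches the paper's proof in structure: reduce to an adjacent transposition and realize it by a sequence of half-pushes that swaps the two adjacent groups of strips while leaving the low-height horizontal arcs (and hence the marking) undisturbed. This is the paper's argument, so the approach is essentially identical.

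A few bookkeeping details differ from the paper, and at least one of your intermediate claims is off. In \Cref{construction:T0}, only the first group has $k_{\sigma(1)}+1$ strips; each group with index $m\ge 2$ contains exactly $k_{\sigma(m)}$ strips (the index $j$ runs over a window of length $k_{\sigma(m)}$), so your description of the $m$-th group as ``$k_{\sigma(m)}+1$ consecutive strips'' is wrong for $m\ge 2$. Relatedly, your iterated procedure (half-push the $w_{\sigma(m+1)}$-free prong, then ``immediately half-push the newly created $w_{\sigma(m)}$-free prong upward'') is not quite coherent as stated: that second half-push would push the $w_{\sigma(m)}$-free prong back past the $w_{\sigma(m+1)}$-prong you just fixed, i.e.\ you would be composing a half-push with its reversal, which is a full push returning to the start. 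The paper instead first slides the $w'$-free prong up along its own slit (leaving $A_\sigma$ untouched) until it sits just below the $w$-fixed prong, so that the local picture is that of \Cref{figure:halfpush}, and then performs a total of $2k'+1$ half-pushes (where $k'$ is the order of the upper cone point $w'$), followed by a vertical re-ordering of strips; note in particular that the paper's count depends on $k'=k_{\sigma(m+1)}$, not on $k_{\sigma(m)}$ as your iteration count suggests. These are exactly the ``purely bookkeeping'' issues you flagged, but as written they do not yet resolve correctly. The paper's explanation of how $A_\sigma$ is preserved is briefer than yours (it simply observes the deformations never disturb the root marking), but your justification via the cut-paste being localized away from height $\epsilon$ is the correct reason.
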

\begin{proof}
    It suffices to construct such a path in the case when $\tau$ differs from $\sigma$ by a single transposition of adjacent elements. By hypothesis, there are critical points $w, w'$ that are adjacent on both $T_\tau$ and $T_\sigma$, and for which $w$ is below $w'$ on $T_\sigma$ and above $w'$ on $T_\tau$. By \Cref{construction:T0}, on $T_\sigma$ there is a free prong in the topmost slit for $w$ that is associated to $w'$. This can be pushed up on the associated slit until it is just below the fixed prong, leaving $A_\sigma$ undisturbed in the process. The local picture near the cone points for $w, w'$ now looks like that of \Cref{figure:halfpush}. Let $w$ have order $k$ and $w'$ order $k'$. After a sequence of $2k' + 1$ half-pushes, $w$ and $w'$ will have switched places: the strips formerly occupied by $w$ will now be occupied by $w'$ and vice versa. Vertically re-ordering the strips, one obtains a cut-paste equivalence realizing the resulting surface as $T_\tau$. At no point in this process did the deformations disturb the root marking, and so we see this path takes $A_\sigma$ to $A_\tau$ as required.
\end{proof}

% \subsubsection{Permutation monodromy} Our analysis of $\Gamma_\kappa$ begins with exhibiting some explicit monodromy elements. Here we are concerned with the {\em permutation monodromy}, i.e. the group of permutations of the roots and the critical points, a subgroup of $S_n \times S_p$. We show that the permutations of the roots is always quite large ($S_n$ or the alternating group $A_n$), even when the critical points are required to be fixed pointwise.

% \begin{lemma}
%     \label{lemma:permmono}
%     Let $n \ge 3$ and $\kappa = k_1 \ge \dots \ge k_p$ be a permutation of $n-1$ with $p \ge 2$ parts. Let $r = \gcd(\kappa)$. If $r$ is even, then the permutation monodromy subgroup contains the alternating group $A_n \times 1 \le S_n \times S_p$. If $r$ is odd, then it contains $S_n \times 1 \le S_n \times S_p$.
% \end{lemma}

% \begin{proof}
%     \red{FIGURE} shows how to construct loops for $1 \le m \le p$ inducing the $k_m+1$-cycles 
%     \[
%     (k_1 + \dots + k_{m-1} + 1,\dots, k_1 + \dots + k_m+1)
%     \]
%     (including the case $m = 1$, interpreted as $(1, \dots, k_1 + 1)$), while preserving the critical points pointwise. It is not hard to see that these cycles generate $A_n$ or $S_n$ according to whether $r$ is even or odd.
% \end{proof}

We can also obtain some explicit monodromy elements via a sequence of half-pushes.

\begin{lemma}
    \label{lemma:fulltwist}
    Let $w, w'$ be critical points on a translation surface $T \in \Omega_\kappa$ such that some cone point for $w'$ lies in a small neighborhood of a cone point for $w$. Then there is a loop in $\Omega_\kappa$ based at $T$ inducing the full twist about the curve enclosing $w, w'$ contained in the union of small neighborhoods for $w, w'$.
\end{lemma}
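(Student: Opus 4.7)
The plan is to realize the desired loop as a concatenation of half-push moves performed entirely in a small disk $D$ containing only the two cone points $w$ and $w'$ among the distinguished points of $\C_\kappa$. Since each half-push modifies the translation surface structure only in a small neighborhood of the two cone points it involves, by taking these neighborhoods small enough the entire loop will be supported in $D$. Consequently, the induced monodromy is a pure braid in $PB_\kappa$ that moves only $w$ and $w'$, and therefore lies in the infinite cyclic subgroup of $PB_\kappa$ generated by the full twist $T_{\partial D}$.

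For the construction, write $k$ and $k'$ for the orders of $w$ and $w'$. After a preliminary trivial deformation supported in $D$, we may assume that in $T$ the cone point $w$ is a fixed prong in some strip and $w'$ appears as a free prong of that strip adjacent to $w$, matching the local picture of \Cref{figure:halfpush,figure:halfpush2}. A single full push (two consecutive half-pushes) cycles the roles of ``fixed'' and ``free'' between $w$ and $w'$ and advances the local prong indexing by one position. After a suitable number of full pushes, the strip data returns to its initial form up to cut-paste equivalence, closing the sequence of half-pushes into a loop $\gamma \in \pi_1(\Omega_\kappa, T)$.

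The main obstacle is showing that the monodromy of $\gamma$ is exactly $T_{\partial D}$, rather than a higher power or the identity. I plan to do this by tracking the braid traced out by the pair $(w, w')$ over the course of $\gamma$. Each full push corresponds, in the local model of \Cref{figure:halfpush}, to rotating the pair by one prong position; the sequence of full pushes is chosen to be the minimal one for which the pair returns to its starting configuration, corresponding geometrically to a single full rotation of $w'$ around $w$ (and vice versa). Under the natural identification $\pi_1(\UConf_2(D)) \cong \Z$ with generator $T_{\partial D}$, such a single full rotation maps to the generator itself, so the monodromy is exactly $T_{\partial D}$ as required. The delicate bookkeeping here is counting the prong-positions carefully and making sure each full push contributes exactly the expected angular rotation; this is most easily verified by first treating the base case $k = k' = 1$ directly from \Cref{figure:halfpush} and then bootstrapping via the combinatorics of strip gluings.
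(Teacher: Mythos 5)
Your plan is in the same spirit as the paper's proof (both close up a sequence of half-pushes into a loop), but it contains a genuine gap centered on the identification of the monodromy element. You write that the minimal sequence of full pushes for which ``the strip data returns to its initial form up to cut-paste equivalence'' corresponds to a single full rotation of $w'$ around $w$, and then appeal to $\pi_1(\UConf_2(D)) \cong \Z$ with generator $T_{\partial D}$. Both of these claims are problematic. First, the generator of $\pi_1(\UConf_2(D))$ is the \emph{half}-twist, not the full twist $T_{\partial D}$; $T_{\partial D}$ is its square. Second, and more seriously, when $w$ and $w'$ have the same order $k = k'$, the minimal closing sequence is $2k'+1$ half-pushes, which swaps the two cone points. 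This produces a loop in $\Omega_\kappa$ whose monodromy is the half-twist exchanging $w$ and $w'$ (an element of $B_\kappa$, since equal-order critical points may be permuted) --- not the full twist. Your ``single full rotation'' description does not match the sequence you actually construct, so the proof breaks down precisely in the case of equal orders.

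The paper resolves this by not stopping at the minimal closing sequence: it invokes the count from \Cref{lemma:basepaths} that $2k'+1$ half-pushes exchange $w$ and $w'$, then performs $2k+1$ additional half-pushes (with the roles of $k$ and $k'$ now reversed) to exchange them a second time, for a total of $2(k+k'+1)$. This returns the surface to the original labeled configuration and visibly composes two half-twists into a full twist. Your plan can be repaired along the same lines: replace ``minimal closing'' with the explicit two-stage count, and correct the $\UConf_2$ identification (or work in $\PConf_2$ after the two exchanges). As written, though, the count is wrong in the $k=k'$ case and the $\pi_1$ computation is misidentified, so the argument is not yet a proof.
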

\begin{proof}
     We saw in \Cref{lemma:basepaths} that a sequence of $2k' + 1$ half-pushes exchanges the roles of $w$ and $w'$. Consequently, performing a total of $2(k+k' + 1)$ half-pushes will exchange these once again. The monodromy of the resulting loop is a full twist of $w$ about $w'$, as topologically, we see the cone points $w$ and $w'$ orbit once around each other in a small neighborhood, leaving the rest of the surface undisturbed.
\end{proof}

\subsubsection{Containment of vertex stabilizer} The proof of \Cref{theorem:main} follows standard principles in geometric group theory (cf. \cite[Lemma 4.10]{FM}): given an action of a group $G$ on a connected graph $X$, one sees that $G$ is generated by elements taking a chosen vertex $v$ to adjacent vertices, along with the stabilizer $G_v$ of $v$. We will apply this principle to the action of $B_{\kappa}[\psi_T]$ on $\bM_\kappa$. For basepoint, we take $T_0 \in \Omega_\kappa$ to be the basepoint surface of \Cref{construction:T0} associated to the standard ordering $k_1, k_2, \dots, k_p$, and let $A_0$ be the associated ARM arising from \Cref{construction:T0}. We will take the vertex of $\bM_\kappa$ associated to $A_0$ as our basepoint. To see that $\Gamma_\kappa = B_\kappa[\psi_T]$, we will show that both types of generating elements are contained in the monodromy subgroup $\Gamma_\kappa$. In \Cref{lemma:monodromycontainsstab}, we consider the vertex stabilizer; in preparation, we first establish a fact about braid groups.

\begin{lemma}\label{lemma:coloredbraidgen}
    Let $\lambda: [n] \to \Z$ be a ``coloring'' of the finite set $[n] = \{1,2, \dots, n\}$, and let $B_n[\lambda] \le B_n$ be the ``colored braid group'' consisting of all braids that preserve the coloring $\lambda$. With respect to the standard generators $\{a_{i,j}\mid 1 \le i < j \le n\}$ of the pure braid group \cite[Section 9.3]{FM}, let $\sigma_{i,j}$ denote the corresponding half-twist. Then $B_n[\lambda]$ is generated by the set of elements
    \[
    \{\sigma_{i,j}^{w(i,j)} \mid 1 \le i < j \le n\},
    \]
    where $w(i,j) = 1$ if $\lambda(i) = \lambda(j)$ and $w(i,j) = 2$ otherwise.
\end{lemma}

\begin{proof}
    Let the subgroup generated by the indicated elements $\sigma_{i,j}^{w(i,j)}$ be denoted by $\Gamma$.
    Since $PB_n \le B_n[\lambda]$ and the generating set for $\Gamma$ evidently contains the generating set $\{\sigma_{i,j}^2 \mid 1 \le i < j \le n\}$ of $PB_n$, it remains only to show that the quotients $\Gamma/ PB_n$ and $B_n[\lambda]/PB_n$ are isomorphic. The latter decomposes as a product of symmetric groups on the points of a given weight, while the former includes all transpositions between points of equal weight; the result follows.
\end{proof}

        \begin{lemma} \label{lemma:monodromycontainsstab}
            Let $G_0 \le B_\kappa[\psi_T]$ denote the stabilizer of $A_0$. Then $G_0 \le \Gamma_\kappa$.
        \end{lemma}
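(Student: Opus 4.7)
My plan is to first identify a concrete generating set for $G_0$, then realize each generator as a monodromy element by assembling the half-push deformations of Lemmas \ref{lemma:basepaths} and \ref{lemma:fulltwist}.

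\textbf{Step 1: Structure of $G_0$.} The collection $A_0 \cup \{\infty\}$ is a tree in $\C_\kappa$ whose complement is an open disk $D$ containing precisely the critical points $w_1,\dots,w_p$. Any element of $B_\kappa$ stabilizing $A_0$ (as an isotopy class of ARM) can be isotoped to fix $A_0$ pointwise, and hence restricts to a braid of the $p$ critical points inside $D$. Since such a restriction is supported disjointly from $A_0$, it automatically preserves each arc of $A_0$ and its winding number, so the framed condition is automatic. Therefore $G_0$ is identified with the subgroup $B_p(\kappa) \le B_p$ of the planar braid group on $(w_1,\dots,w_p)$ whose underlying permutation preserves the partition by multiplicity.

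\textbf{Step 2: Generating set.} The group $B_p(\kappa)$ is generated by
\begin{enumerate}
\item[(a)] full twists $T_{c_{ij}}$ about simple closed curves $c_{ij}\subset D$ enclosing exactly the pair $\{w_i, w_j\}$, for all pairs $\{i,j\}$, and
\item[(b)] half-twists $\sigma_{i,i+1}$ exchanging $w_i, w_{i+1}$ whenever $k_i = k_{i+1}$.
\end{enumerate}
The full twists generate $PB_p$; adjoining a block-adjacent transposition in each block of equal multiplicities produces any order-preserving permutation. So it suffices to exhibit each such generator in $\Gamma_\kappa$.

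\textbf{Step 3: Realization via half-pushes.} For a full-twist generator $T_{c_{ij}}$, choose a permutation $\sigma$ so that $w_i$ and $w_j$ sit in adjacent groups of strips on $T_\sigma$; then a cone point for $w_j$ can be slid into a small neighborhood of one for $w_i$ without disturbing $A_\sigma$. Lemma \ref{lemma:fulltwist} now produces a loop at $T_\sigma$ inducing the full twist about a curve enclosing $\{w_i, w_j\}$. The path of Lemma \ref{lemma:basepaths} from $T_0$ to $T_\sigma$ carries $A_0$ to $A_\sigma$, so conjugation yields a loop based at $T_0$ whose monodromy lies in $G_0$ and is a conjugate of the desired full twist; varying $\sigma$ and the pair $\{i,j\}$ covers all pure generators.

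For a half-twist $\sigma_{i,i+1}$ with $k_i = k_{i+1}$, pick $\sigma$ placing $w_i,w_{i+1}$ adjacent, and let $\tau = (i\;i{+}1)\circ \sigma$. Because $w_i,w_{i+1}$ have the same order, $T_\sigma$ and $T_\tau$ represent the same point of the unlabeled moduli space $\Omega_\kappa$; moreover the ARMs $A_\sigma$ and $A_\tau$ coincide on this common surface, since the construction of $A_\bullet$ is the height-$\epsilon$ arc in each strip independently of which cone point is assigned there. The path of Lemma \ref{lemma:basepaths} from $T_\sigma$ to $T_\tau$ is therefore a genuine loop in $\Omega_\kappa$, its monodromy interchanges $w_i$ and $w_{i+1}$ along paths disjoint from $A_\sigma$, and so realizes the prescribed half-twist. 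Conjugating by the basepath to $T_0$ places $\sigma_{i,i+1}$ in $\Gamma_\kappa$.

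\textbf{Main obstacle.} The delicate step is verifying that the monodromy produced by the concatenated half-pushes is precisely the stated full twist (resp.\ half-twist), rather than some other element of $G_0$ that happens to have the same effect on framings. This amounts to a careful local analysis of the trajectories of cone points through the sequence of push-and-recut moves of Figures \ref{figure:halfpush} and \ref{figure:halfpush2}, together with identifying the resulting cone-point braid with the standard generators of $B_p(\kappa)$ inside the disk $D = \C_\kappa \setminus (A_0 \cup \{\infty\})$.
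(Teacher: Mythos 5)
Your overall strategy matches the paper's (decompose the stabilizer into twist/half-twist generators and realize each via half-pushes), but there are two genuine gaps. The first is in Step~1: the claim that $G_0 \cong B_p(\kappa)$ is incorrect, because $G_0$ contains elements that cyclically permute the arcs of $A_0$ (and hence the roots at their endpoints). Such an element cannot be isotoped to fix the spider graph $A_0 \cup \{\infty\}$ pointwise; its restriction to $D = \C_\kappa \setminus (A_0 \cup \{\infty\})$ does not fix $\partial D$, so it is not a braid of the critical points in the disk. It is only the subgroup $PG_0 \le G_0$ of braids preserving each arc individually that is isomorphic to the colored braid group, and $G_0/PG_0$ is a nontrivial cyclic group. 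The paper's proof addresses this at the outset: it first realizes the cyclic rotation as a monodromy element (via a composition of full pushes that shuffles the strips by one) and thereby reduces to showing $PG_0 \le \Gamma_\kappa$. Your proof is missing this reduction.

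The second gap is the one you flagged yourself: the conjugation-by-basepath argument in Step~3 produces a full twist about \emph{some} curve enclosing $\{w_i,w_j\}$, but the isotopy class of that curve depends on the basepath used and need not be the specific generator $c_{ij}$ from Step~2, so ``varying $\sigma$ and the pair'' does not obviously yield a generating set for $PB_p$. The paper avoids this by constructing, on the single basepoint surface $T_0$, a concrete system of arcs (\Cref{figure:pushsystem}) connecting pairs of critical points with the standard pure-braid intersection pattern, and realizing the twist about each by pushing a free prong along the corresponding arc. You would need something comparably explicit to close this gap.
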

        \begin{proof}
            A given $g \in G_0$ preserves $A_0$ as a set, but does not necessarily fix each individual arc. We first reduce to this case. According to \Cref{remark:armordersroots}, $A_0$ induces a cyclic ordering on the roots, which must be preserved by $g \in G_0$. Consider the deformation on $T_0$ induced by performing a full push of $w_2$ about $w_1$, then a full push of $w_3$ about $w_2$ and so on, up through a full push of $w_{p-1}$ about $w_p$. The effect is to simply vertically re-order the strips in $T_0$, pushing each one up by one (including taking the top strip $S_n$ down to the bottom). Such a deformation preserves $A_0$ while inducing a cyclic permutation on the roots. 
            
            Thus, by applying some number of these deformations, we can assume that $g$ preserves each arc of $A_0$. Let this subgroup be denoted $PG_0 \le G_0$; it remains to show that $PG_0 \in \Gamma_\kappa$. By definition, $PG_0$ consists of all braids that preserve each arc $A_0$ as well as the winding numbers of each component of $A_0$. But by \Cref{lemma:framedcriterion}, it follows that any braid preserving $A_0$ necessarily preserves $\psi_T$. Thus, $PG_0$ can be identified with the full ``colored braid group'' on the critical points on the cut-open surface $\C_\kappa \setminus A_0$ (we color the critical points according to their orders, and allow any braid that preserves color). Any such braid can be induced by a deformation of translation surfaces in $\Omega_\kappa$, as follows. 

\begin{figure}[ht]
\labellist
\endlabellist
\includegraphics[scale=0.95]{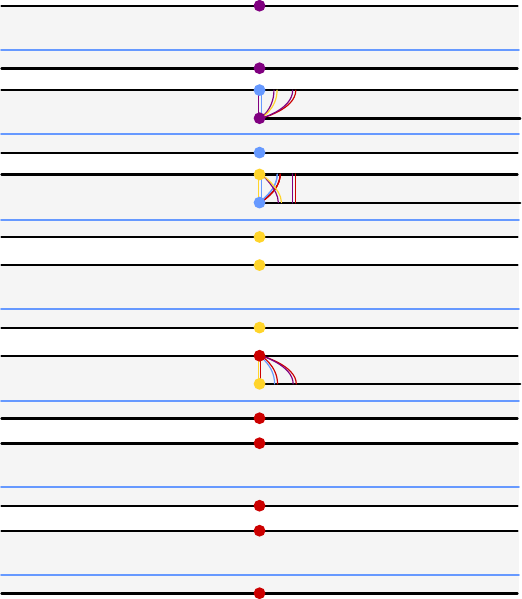}
\caption{A system of paths on $T_0$ avoiding $A_0$, illustrated in the case $n = 7, \kappa = \{2,2,1,1\}$ (it should be possible to infer the general construction from this). Gluing instructions have been suppressed for clarity but follow what was established in \Cref{construction:T0}. The path connecting $w$ to $w'$ is bicolored by the associated colors of $w$ and $w'$.}
\label{figure:pushsystem}
\end{figure}

            By \Cref{lemma:coloredbraidgen}, the colored braid group is generated by $\binom{p}{2}$ twists or half-twists about a suitable collection of arcs connecting each pair of critical points. \Cref{figure:pushsystem} illustrates a system of such arcs on the basepoint surface $T_0$. They form the intersection pattern of the standard generators for the pure braid group. Observe that each path is based at one end at a free prong. The Dehn twist about a neighborhood of the path is then realized in $\Omega_\kappa$ by pushing the free prong along the path to the other critical point, completing one full orbit (following \Cref{lemma:fulltwist}), and returning. 
            
            It remains to exhibit a half-twist exchanging critical points of the same order. It suffices to consider the case when critical points $w_m$ and $w_{m+1}$ are vertically adjacent in $T_0$, since the braid group is generated by half-twists about such adjacent points. If each of $w_m, w_{m+1}$ have order $k$, then the construction of \Cref{lemma:basepaths} in fact yields a {\em loop} based at $T_0$ which exchanges $w_m, w_{m+1}$ in a half-twist.
        \end{proof}

\subsubsection{Moving to adjacent vertices} Finally we show that the action of $\Gamma_\kappa$ on $\bM_\kappa$ is transitive on the vertices adjacent to the chosen basepoint. We first classify the orbits of adjacent vertices.

        \begin{definition}
            Let $A= \{\alpha_1, \dots, \alpha_n\}$ be an ARM on $\C_\kappa$, and let $\gamma$ be an admissible arc on $\C_\kappa$ disjoint from $A$ except at endpoints (necessarily $\infty$ and some root $z_i$). Then $\gamma \cup \alpha_i$ forms a simple closed curve, and every distinguished point except $z_i$ and $\infty$ lies in the interior of one of the two disks on $S^2$. This partition is ordered, in the sense that a distinguished point either lies on the disk bounded by the left or the right side of $\alpha_i$ (when oriented so as to run from $\infty$ to $z_i$.

            The {\em type} of $\gamma$ is defined to be this ordered partition.
        \end{definition}

        \begin{lemma}
            \label{lemma:transactiondisjoint}
            The stabilizer $G_0 \le \Gamma_\kappa$ of $A_0$ acts transitively on admissible arcs of a fixed type disjoint from $A_0$.
        \end{lemma}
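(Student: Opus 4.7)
The plan is to cut $\C_\kappa$ open along $A_0$, reduce the statement to a question about simple arcs in a disk with colored interior marked points, and realize the required element using the colored braid group description of $PG_0$ furnished by \Cref{lemma:monodromycontainsstab}.

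First I would cut $\C_\kappa$ along $A_0$ to obtain a disk $D$: its boundary records the arcs of $A_0$ twice, separating $n$ ``wedge'' lifts of $\infty$ from the roots $z_1,\dots,z_n$, and the critical points become interior marked points of $D$, colored by their orders. Two admissible arcs $\gamma,\gamma'$ of fixed type disjoint from $A_0$ lift to simple arcs $\tilde\gamma,\tilde\gamma'$ in $D$. The common endpoint $z_i$ is built into the type; the wedge of $\infty$ at which each arc emanates is pinned down by the partition of roots in the type together with the cyclic ordering of $A_0$ at $\infty$, so $\tilde\gamma,\tilde\gamma'$ share both boundary endpoints in $D$. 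Moreover, the colored counts of interior marked points on each side of $\tilde\gamma$ and $\tilde\gamma'$ agree, since these are part of the common type.

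Next, I would invoke the standard classification of simple arcs in a disk with marked points (a bigon-criterion argument): two simple arcs between the same two boundary points are isotopic rel $\partial D$ if and only if they induce the same named partition of interior marked points. Writing the named partitions as $(L,R)$ for $\tilde\gamma$ and $(L',R')$ for $\tilde\gamma'$, the matching colored counts yield a color-preserving permutation $\pi$ of $\{w_1,\dots,w_p\}$ with $\pi(L)=L'$ and $\pi(R)=R'$. By the description of $PG_0$ obtained in \Cref{lemma:monodromycontainsstab}, some $g\in PG_0$ realizes $\pi$ as a homeomorphism of $D$ fixing $\partial D$ pointwise. Applied to $\tilde\gamma$, this $g$ produces an arc with named partition $(\pi(L),\pi(R))=(L',R')$, which therefore agrees with $\tilde\gamma'$ up to isotopy rel $\partial D$ by the classification just cited. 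Gluing back along $A_0$, we obtain $g\in G_0$ with $g(\gamma)=\gamma'$, and $g\in\Gamma_\kappa$ by \Cref{lemma:monodromycontainsstab}; since $g$ fixes $A_0$ pointwise, it automatically preserves all winding numbers and lies in $B_\kappa[\psi_T]$ by \Cref{lemma:framedcriterion}.

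The main technical point I expect to have to address carefully is Step 1: checking that the type really does determine the shared wedge of $\infty$ in $D$, which amounts to tracking how the partition of roots interacts with the cyclic order at $\infty$. The classification-of-arcs step is standard, and the passage from a color-preserving permutation $\pi$ to an explicit element of $PG_0$ is exactly the content of the colored braid group description already established in the preceding lemma; both should go through cleanly once the setup is in place.
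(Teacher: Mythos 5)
Your setup (cut $\C_\kappa$ along $A_0$ to obtain a disk $D$ with the $p$ critical points as interior marked points, then use the colored/pure braid group description of $PG_0$ from \Cref{lemma:monodromycontainsstab}) is correct and is what the paper's brief proof is implicitly doing. But the classification claim you cite is false, and the argument as written does not go through.

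You assert that two simple arcs in $D$ between the same two boundary points are isotopic rel $\partial D$ if and only if they induce the same named partition of interior marked points. That is not true. What the change-of-coordinates principle actually gives is weaker: any two such arcs with the same partition lie in a single orbit of the mapping class group of $D$ rel $\partial D$ (equivalently, the pure braid group of the marked points), so they are related by a braid, but they need not be isotopic. For a concrete counterexample, let $D$ be a disk with two interior marked points $w_1, w_2$, fix $a, b \in \partial D$, and let $\gamma$ be an arc from $a$ to $b$ separating $w_1$ from $w_2$. Cutting along $\gamma$ yields two once-marked bigons, each with trivial mapping class group rel boundary, so the stabilizer of $\gamma$ in the pure braid group (which is $\cong \Z$, generated by the boundary Dehn twist $T_{\partial D}$) is trivial; hence $T_{\partial D}(\gamma)$ is a genuinely different isotopy class of arc with the very same partition.

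The repair is small and lands exactly on the paper's argument: conclude not that $g(\tilde\gamma)$ is isotopic to $\tilde\gamma'$, but that $g(\tilde\gamma)$ and $\tilde\gamma'$ lie in the same orbit of the pure braid group of $D$. Since that pure braid group sits inside $PG_0 \le G_0 \le \Gamma_\kappa$ by \Cref{lemma:monodromycontainsstab}, composing $g$ with a suitable pure braid yields the required element of $G_0$. (A secondary point: the paper's ``type'' records the actual identity of each distinguished point on each side, not merely the colored count, so in fact $L = L'$ and $R = R'$ and the permutation $\pi$ can be taken to be the identity; your detour through a nontrivial colored braid is harmless but unnecessary.)
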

        \begin{proof}
            It is easy to see that the pure braid group of $\C_\kappa\setminus A_0$ acts transitively on arcs of a given type. By \Cref{lemma:monodromycontainsstab}, this is a subgroup of $G_0 \le \Gamma_\kappa$.
        \end{proof}
        
        \begin{lemma}\label{lemma:monodromytransadjacent}
            Let $A' \in \bM_\kappa$ be an ARM adjacent to $A_0$. Then there is $g \in \Gamma_\kappa$ for which $g(A_0) = A'$.
        \end{lemma}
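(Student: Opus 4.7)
The plan is to combine \Cref{lemma:transactiondisjoint} with an explicit monodromy construction via half-pushes. Writing $A' = (A_0 \setminus \{\alpha_{i_0}\}) \cup \{\alpha'_{i_0}\}$, the arcs $\alpha_{i_0}$ and $\alpha'_{i_0}$ are disjoint except at endpoints and hence bound a disk $D \subset \C_\kappa$; admissibility of both arcs forces $\sum_{w_j \in D} k_j$ to equal the number of roots contained in $D$. By \Cref{lemma:transactiondisjoint}, the vertex stabilizer $G_0$ (which lies in $\Gamma_\kappa$ by \Cref{lemma:monodromycontainsstab}) acts transitively on admissible arcs disjoint from $A_0$ of a fixed type. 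It therefore suffices, for each possible type realized by an admissible adjacent arc, to exhibit a \emph{single} $g \in \Gamma_\kappa$ such that $g(A_0)$ is an ARM adjacent to $A_0$ whose modified arc has that type; post-composition with a suitable stabilizer element then recovers $A'$ exactly.

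To produce such a $g$, I would first invoke \Cref{lemma:basepaths} to shift the basepoint from $T_0 = T_{\mathrm{id}}$ to some $T_\sigma$ in which the critical points lying in $D$ occupy a contiguous block of strips immediately clockwise-adjacent to $S_{i_0}$, and the roots associated to this block are precisely the roots in $D$. The admissibility constraint $|D \cap \{z_1,\dots,z_n\}| = \sum_{w_j \in D} k_j$ is exactly what guarantees the total number of strips in such a block equals the cardinality of $D \cap \{z_1,\dots,z_n\}$ needed for the block to match $D$. Next I would perform a loop in $\Omega_\kappa$ that cyclically reorders the strips of this block, realized concretely as a sequence of half-pushes entirely analogous to those appearing in \Cref{lemma:basepaths} and \Cref{lemma:fulltwist}. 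The induced braid fixes every arc of $A_\sigma$ outside the block while dragging the arc terminating at $z_{i_0}$ all the way around, producing an admissible arc $\beta$ whose disk with $\alpha_{i_0}$ encloses exactly the distinguished points of $D$; that is, $\beta$ has the prescribed type. Finally, \Cref{lemma:transactiondisjoint} provides an element of $G_0 \le \Gamma_\kappa$ carrying $\beta$ to $\alpha'_{i_0}$ while fixing the rest of $A_0$, completing the construction.

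The main obstacle is verifying the \emph{purity} of the strip-cycling monodromy: one must check that the loop genuinely fixes every other arc of $A_\sigma$ up to isotopy, and that the new arc $\beta$ has exactly the advertised type. The degenerate cases --- $D$ containing no critical points (so $\alpha'_{i_0}$ is isotopic to $\alpha_{i_0}$), or $D$ containing all critical points, so that the strip block wraps entirely around $T_\sigma$ --- will need separate attention; in the latter case, the ``cyclic reorder'' is essentially the vertical re-indexing appearing in the proof of \Cref{lemma:monodromycontainsstab}, which is already known to lie in $\Gamma_\kappa$. Once these edge cases are handled and the purity check is performed by tracking the effect of each half-push on the horizontal arcs $\alpha_i$ of $A_\sigma$, the argument closes.
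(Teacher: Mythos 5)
Your overall strategy matches the paper's: reduce to realizing one representative of each type via \Cref{lemma:transactiondisjoint}, change basepoint via \Cref{lemma:basepaths} so the relevant critical points sit in a convenient block of strips, exhibit an explicit push-based monodromy element producing an arc of that type, then fix things up with the stabilizer. The reduction steps and the observation that the type is determined by the enclosed critical points (with the roots in $D$ then forced by admissibility) are all correct and align with the paper.

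The gap is the central deformation itself. You describe it as ``a loop in $\Omega_\kappa$ that cyclically reorders the strips of this block,'' asserting that this ``fixes every arc of $A_\sigma$ outside the block while dragging the arc terminating at $z_{i_0}$ all the way around, producing an admissible arc $\beta$.'' But a cyclic reorder of strips is precisely the kind of deformation used in the proof of \Cref{lemma:monodromycontainsstab}, and there the paper notes that it \emph{preserves $A_0$ as a set} while merely cyclically permuting the root labels. Applied to a sub-block, a cyclic reorder would likewise permute the horizontal arcs among the strips of the block; it does not fix the other arcs of the block while producing a genuinely new arc $\beta$ terminating at $z_{i_0}$. What you actually need is a loop inducing the effect of a point-push of $z_{i_0}$ around a loop encircling exactly the critical points of $D$, and that is a fundamentally different (and more delicate) move: the paper's \Cref{figure:newarm} realizes it by first gathering the free prongs of $C$ into a single strip, then threading them one at a time through the top-left edges with repeated recutting, followed by a vertical reorder and a return along the initial path. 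Your flagged ``purity check'' would, if carried out carefully, reveal that the naive cyclic reorder does not produce the claimed braid, so this step needs to be replaced with a construction along the lines of \Cref{figure:newarm} rather than merely verified.
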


\begin{figure}[h!]
\labellist
\tiny
\pinlabel \textcolor{myblue}{$\alpha$} at 20 600
\pinlabel \textcolor{myblue}{$\alpha''$} at 320 75
\pinlabel (1) at 110 490
\pinlabel (2) at 260 490
\pinlabel (3) at 410 490
\pinlabel (4) at 110 250
\pinlabel (5) at 260 250
\pinlabel (6) at 410 250
\pinlabel (7) at 110 10
\pinlabel (8) at 260 10
\pinlabel (9) at 410 10
\endlabellist
\includegraphics[scale = 0.75]{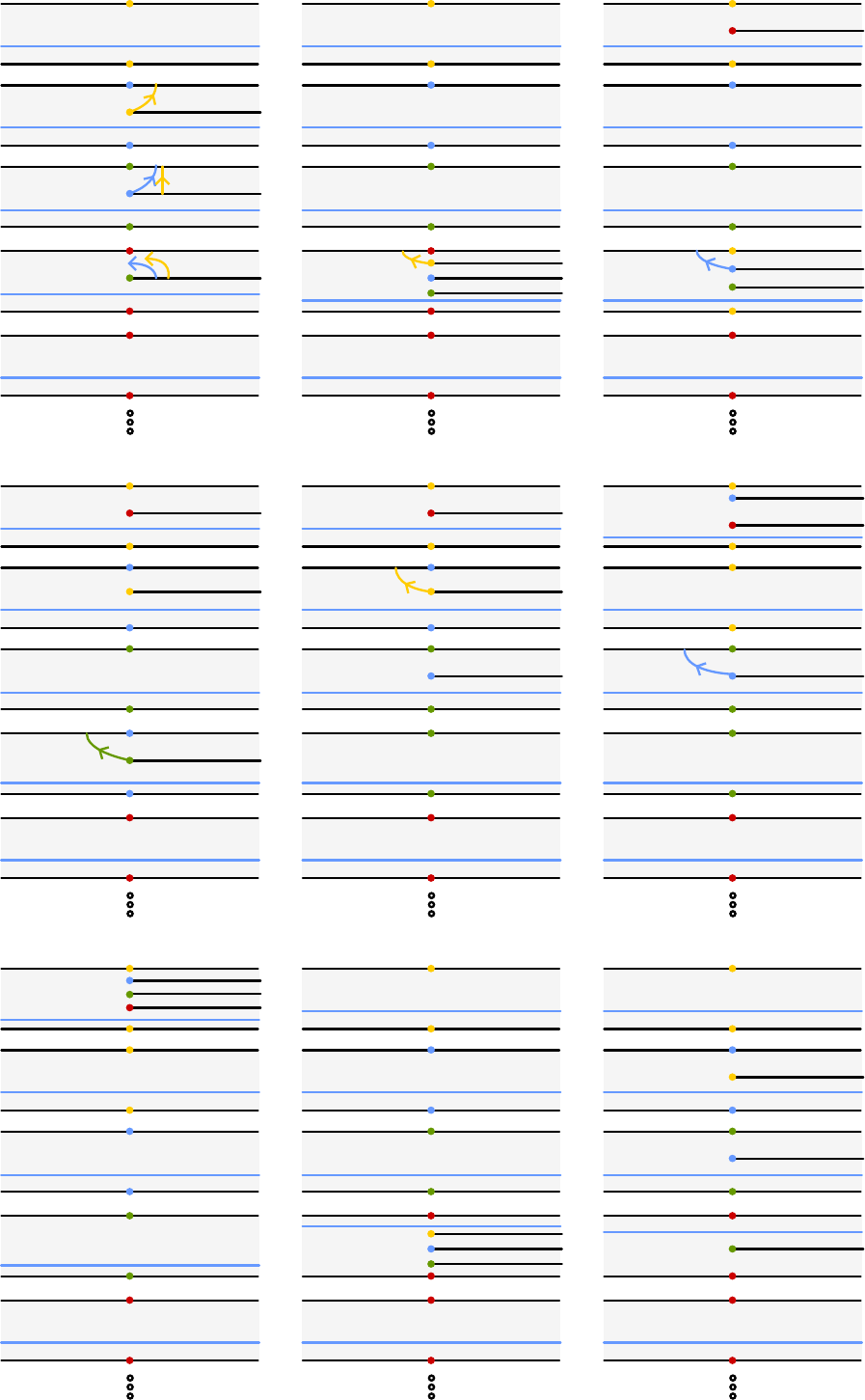}
\caption{A deformation based at $T_\sigma$ taking $A_\sigma$ to $A_\sigma''$.}
\label{figure:newarm}
\end{figure}
        
        \begin{proof}
            Let $\alpha_i'$ be the unique element of $A'$ not contained in $A_0$. Then $\alpha_i \cup \alpha_i'$ divides the set of critical points into two groups $C$ and $C'$, lying to the right (resp. left) of $\alpha_i$ when oriented as usual to run from $\infty$ to a root.
            
            Observe that the set of roots enclosed by $\alpha_i \cup \alpha_i'$ to the right of $\alpha_i$ is in fact determined by $C$: letting $k$ denote the sum of the orders of the points in $C$, the roots enclosed with $C$ consist of the $k$ roots $z_{i+1}, \dots, z_{i+k}$ lying immediately clockwise from $\alpha_i$ in the cyclic ordering of the roots specified by the ARM $A_0$ (cf. \Cref{remark:armordersroots}). Thus the type of $\alpha_i'$ relative to $A_0$ is determined only by the enclosed critical points $C$.

            Let $\sigma$ be an ordering of the critical points for which the the points of $C$ appear last (in any internal order). By \Cref{lemma:basepaths}, there is a path from $T_0$ marked with $A_0$ to $T_\sigma$ marked with $A_\sigma$. Let $A_\sigma'$ denote the ARM on $T_\sigma$ obtained by parallel transport of $A'$ along this path. 
            
            \Cref{figure:newarm} exhibits an element of the monodromy based at $T_\sigma$ taking $A_\sigma$ to some adjacent ARM $A_\sigma''$ of the same type as $A_\sigma'$; these differ only at the arcs marked $\alpha$ and $\alpha''$ in the first and last panels of the figure. In the illustrated example, the set $C$ consists of the critical points marked in green, blue, and yellow (the topmost three). For simplicity we illustrate the case where each of these has order $1$; the deformations in the general case are completely unchanged. Passing from (1) to (2), we push all but the bottom-most critical point in $C$ up as shown (if $C$ consists of only one element, this step is not performed). Denote the strip containing all these free prongs by $S$. Passing from panels (2) through (5), we work our way from top to bottom, pushing the topmost free prong in $S$ up through the top-left; this necessitates repeatedly recutting $S$ so that the topmost free prong becomes fixed. Passing from panels (5) to (7), we push the free prongs for the same subset of critical points in $C$ up through the top-left of their slits; this again requires a recutting. Passing from (7) to (8) we simply re-order the strips vertically and recut the top strip in (7). Finally, one passes from (8) to (9) by following the inverse of the path taken from (1) to (2).
            
            We have thus exhibited an element of the monodromy (based at $T_\sigma$) taking $A_\sigma$ to $A_\sigma''$ of the same type as $A_\sigma'$. Let $A''$ denote the ARM on $T_0$ obtained by parallel transport of $A_\sigma''$ back from $T_\sigma$ to $T_0$. Changing the basepoint back to $T_0$, we obtain a loop $g$ based at $T_0$ that takes $A_0$ to some $A''$ of the same type as $A'$. By \Cref{lemma:transactiondisjoint}, there is some element of $\Gamma_\kappa$ that fixes $A_0$ and sends $A''$ to $A'$. Composing these, we produce the required element $g \in \Gamma_\kappa$ with $g(A_0) = A'$.
        \end{proof}

        \begin{proof}[Proof of \Cref{theorem:main}]
            Let $\psi_T$ be the logarithmic relative winding number function on $\C_\kappa$ (as defined in \Cref{definition:lrwnf}), and let $f \in B_\kappa[\psi_T]$ be arbitrary. With $A_0$ continuing to denote the basepoint ARM on $T_0$ as in \Cref{construction:T0}, define $B = f(A_0)$. By \Cref{lemma:armconnected}, there is a sequence $A_0, A_1, \dots, A_m = B$ of adjacent vertices in the graph $\bM_\kappa$ of ARMs on $\C_\kappa$. 

            We will see how to construct $f' \in \Gamma_\kappa$ for which $f'(A_0) = B$, by inductively defining elements $f_i \in \Gamma_\kappa$ for which $f_i(A_0) = A_i$. By \Cref{lemma:monodromytransadjacent}, there exists $g_1 \in \Gamma_\kappa$ such that $g_1(A_0) = A_1$; define $f_1:=g_1$. Now define $A_{i+1}' = f_i^{-1}(A_{i+1})$, and note that by induction, $A_{i+1}'$ is adjacent to $f_i^{-1}(A_i) = A_0$. Again by \Cref{lemma:monodromytransadjacent}, there is $g_{i+1} \in \Gamma_\kappa$ such that $g_{i+1}(A_0) = A_{i+1}'$, and then set $f_{i+1} = f_i g_{i+1}$, and verify that $f_{i+1}(A_0) = f_i(A_{i+1}') = A_{i+1}$.

            Given such $f'$, note that the composition $f^{-1}f'$ fixes $A_0$ and hence $f^{-1} f' \in G_0 \le \Gamma_\kappa$ by \Cref{lemma:monodromycontainsstab}. It follows that $f \in \Gamma_\kappa$ as desired.
        \end{proof}

\subsection{Proof of \Cref{corollary:main}}\label{subsection:maincor} 

Here we see how to recover and improve the main theorem of \cite{stratbraid1}. We will be brief here and will freely refer back to \cite{stratbraid1} as required.

\cite[Section 4]{stratbraid1} establishes the theory of relative winding number functions on $\C$ marked only at the roots of a polynomial $f \in \Poly_n(\C)[\kappa]$. In \cite[Lemma 4.3]{stratbraid1}, it is shown that, taking $r = \gcd(k_1, \dots, k_p)$, there is a well-defined ``mod-$r$ winding number function'' $\bar \psi_T$ from the set of arcs connecting $\infty$ to a root, valued in $\Z/r\Z$, computed as the mod $r$-reduction of the associated logarithmic winding number function $\psi_T$ on $\C_\kappa$. To prove \Cref{corollary:main}, it suffices to show that the forgetful map $B_\kappa \to B_n$ (induced by forgetting the critical points) induces a surjection $B_\kappa[\psi_T] \to B_n[\bar \psi_T]$. 

To this end, let $\bar f \in B_n[\bar \psi_T]$ be arbitrary, and lift $\bar f$ to $f \in B_\kappa$. Let $A$ be an ARM on $\C_\kappa$. Then $f(A)$ is a root marking, and by hypothesis, each arc in $f(A)$ is isotopic to an admissible arc if allowed to slide over critical points. Via \Cref{lemma:slide}, each such crossing changes the winding number by a multiple of $r$.

For each root $z_i$, choose a system of arcs disjoint from $f(A)$ and connecting $z_i$ to the critical points $w_m$. By twist-linearity, the Dehn twist about a neighborhood of such an arc changes the winding number of the arc in $f(A)$ at $z_i$ by the corresponding order $k_m$, and leaves the winding numbers of each other arc in $f(A)$ unchanged. By performing some suitable set of twists, it is possible to successively alter each of the arcs in $f(A)$ so that the winding numbers become zero. By \Cref{lemma:framedcriterion}, the composition of $f$ with such a collection of twists lies in $B_\kappa[\psi_T]$, and induces the chosen $\bar f$ upon forgetting the critical points. \qed

    \bibliography{references}{}
	\bibliographystyle{alpha}

\end{document}